\documentclass[10pt, a4paper]{amsart}
\usepackage{amsmath, amssymb,amsthm}
\usepackage[extra]{tipa}
\usepackage{lettrine}
\usepackage[Sonny]{fncychap}
\usepackage[english]{babel}
\usepackage{amsmath, amssymb,amsthm}
\usepackage[all]{xy}
\usepackage{tabularx}
\usepackage[applemac]{inputenc}
\usepackage{graphicx}
\usepackage[dvipsnames]{xcolor}
\usepackage{hyperref}
\usepackage[applemac]{inputenc}
\usepackage{enumitem}
\usepackage{geometry}
\usepackage{prerex}
\usetikzlibrary{fit}
\usepackage{pdflscape}
\usepackage{tikz-cd} 
\usepackage{mathtools}

\makeatletter
\def\oversortoftilde#1{\mathop{\vbox{\m@th\ialign{##\crcr\noalign{\kern3\p@}%
      \sortoftildefill\crcr\noalign{\kern3\p@\nointerlineskip}%
      $\hfil\displaystyle{#1}\hfil$\crcr}}}\limits}

\def\sortoftildefill{$\m@th \setbox\z@\hbox{$\braceld$}%
  \braceld\leaders\vrule \@height\ht\z@ \@depth\z@\hfill\braceru$}

\makeatother

\usepackage{mathrsfs}

\DeclareMathOperator{\genus}{genus}
\DeclareMathOperator{\unr}{unr}
\DeclareMathOperator{\et}{et}
\DeclareMathOperator{\Ig}{Ig}
\DeclareMathOperator{\sub}{sub}
\DeclareMathOperator{\quot}{quot}
\DeclareMathOperator{\End}{End}

\DeclareMathOperator{\Ker}{Ker}

\DeclareMathOperator{\ord}{ord}
\DeclareMathOperator{\Gal}{Gal}

\DeclareMathOperator{\Div}{Div}

\DeclareMathOperator{\ab}{ab}
\DeclareMathOperator{\Hom}{Hom}
\DeclareMathOperator{\Symb}{Symb}

\DeclareMathOperator{\Aut}{Aut}
\DeclareMathOperator{\Card}{Card}

\DeclareMathOperator{\cusps}{cusps}
\DeclareMathOperator{\Frob}{Frob}
\DeclareMathOperator{\SL}{SL}
\DeclareMathOperator{\GL}{GL}

\newcommand{\cf}{\textit{cf. }}
\newcommand{\ie}{\textit{i.e. }}

\newcommand{\eg}{\textit{e.g. }}
\theoremstyle{definition}

\newtheorem{rems}{Remarks}[section]
\theoremstyle{plain}
\newtheorem{thm}{Theorem}[section]
\newtheorem{defn}[thm]{Definition}
\newtheorem{question}[thm]{Question}
\newtheorem{conj}[thm]{Conjecture}
\newtheorem{lem}[thm]{Lemma}

\newtheorem{prop}[thm]{Proposition}

\pagestyle{plain}

\title{On the Mazur--Tate conjecture for prime conductor and Mazur's Eisenstein ideal}

\begin{document}
\maketitle
\author{Emmanuel Lecouturier \footnote{Email: elecoutu@mail.tsinghua.edu.cn \\ Yau Mathematical Sciences Center and Tsinghua University, Beijing}}

\begin{abstract}
In 1995, Ehud de Shalit proved an analogue of a conjecture of Mazur--Tate for the modular Jacobian $J_0(p)$. His main result was valid away from the Eisenstein primes. We complete the work of de Shalit by including the Eisenstein primes, and give some applications such as an elementary combinatorial identity involving discrete logarithms of difference of supersingular $j$-invariants. An important tool is our recent work on the so called ``generalized cuspidal $1$-motive''.
\end{abstract}

\section{Introduction}\label{section_introduction}

Let $p$, $\ell$ $\geq 5$ be primes such that $\ell$ divides $p-1$. Let $r=p^t$ be the largest power of $\ell$ dividing $p-1$ and $R = \mathbf{Z}/r\mathbf{Z}$. Barry Mazur and John Tate formulated in \cite{MT} an exceptional zero conjecture modulo $r$ for elliptic curves of conductor $p$. This was proved under certain technical assumptions by Ehud de Shalit in \cite{deShalit_MT} and \cite{deShalit_X_1}. de Shalit in fact proved an analogue of the Mazur--Tate conjecture for the (generalized) Jacobian of the modular curve $X_0(p)$. However, his result was not complete when localizing at the Eisenstein ideal. In this paper we resolve this issue and give various applications using the theory of the Eisenstein ideal, as we now explain in details. 

\subsection{Some notation}
We first introduce some notation. We try to follow as much as possible the notation of de Shalit, since we will frequently refer to his papers. Let $\mathscr{D}=\mathbf{Z}[\mathbf{P}^1(\mathbf{Q})]$ and $\mathscr{D}_0$ be the augmentation subgroup of $\mathscr{D}$. The group $\GL_2(\mathbf{Q})$ acts on $\mathbf{P}^1(\mathbf{Q})$ via $\begin{pmatrix}a & b \\ c & d \end{pmatrix}\cdot x = \frac{ax+b}{cx+d}$; this induces an action on $\mathscr{D}$ and $\mathscr{D}_0$. We denote by $\Gamma$, $\Gamma_0$ and $\Gamma_1$ the groups $\SL_2(\mathbf{Z})$, $\Gamma_0(p)$ and $\Gamma_1(p)$ respectively. If $G \subset \Gamma$ is a congruence subgroup and $V$ is a left $G$-module, we denote by $\Symb_{G}(V)$ the group $\Hom_G(\mathscr{D}_0, V)$ of $G$-equivariant group homomorphisms $\mathscr{D}_0 \rightarrow V$; this is called the space of modular symbols of $G$ in $V$. 

Let $\mathbb{T} = \mathbf{Z}[T_n, n\geq 1]$ be the Hecke algebra over $\mathbf{Z}$ acting on the space of modular forms of weight $2$ and level $\Gamma_0$, and by $\mathbb{T}^0$ its quotient acting on the cusp forms. There is an action of $\mathbb{T}$ on $\Symb_{\Gamma_0}(R)$ (\cf \cite[\S 2.5]{deShalit_MT} for the precise definition). 

Let $\log : (\mathbf{Z}/p\mathbf{Z})^{\times} \rightarrow R$ be a fixed surjective group homomorphism. 
The various equalities stated in this paper will be independent of the choice of $\log$ since both sides will depend on it in the same way. There is a group isomorphism $\mathbf{Q}_p^{\times} \otimes_{\mathbf{Z}} R \simeq R^2$ given by $\alpha \mapsto (v_R(\alpha), \lambda_R(\alpha))$, where $v_R(\alpha)$ is the reduction modulo $r$ of the $p$-adic valuation $\ord_p(\alpha)$ of $\alpha$ and $\lambda_R(\alpha)$ is $\log$ of the reduction modulo $p$ of $\alpha\cdot p^{-\ord_p(\alpha)} \in \mathbf{Z}_p^{\times}$.

\subsection{The Mazur--Tate conjecture for elliptic curves}

Let $E$ is an elliptic curve over $\mathbf{Q}$ of conductor $p$ with split multiplicative reduction at $p$ (\ie $a_p=1$, where $\sum_{n\geq 1 }a_n q^n \in S_2(\Gamma_0(p))$ is the eigenform associated to $E$). We know that there exists $q_E \in \mathbf{Q}_p^{\times}$, called the $p$-adic period of $E$, such that $E(\overline{\mathbf{Q}}_p) \simeq \overline{\mathbf{Q}}_p^{\times}/q_E^{\mathbf{Z}}$ as rigid analytic spaces. The Mazur--Tate conjecture may be stated as follows.

\begin{conj}[Mazur--Tate]\label{MT_conj}
For any $\psi \in \Symb_{\Gamma_0}(R)$ such that $T_q\psi = a_q\cdot \psi$ for all primes $q$, we have in $R$:
\begin{equation}\label{equation_MT_ellcurve}
\lambda_R(q_E)\cdot \psi(( 0)-(\infty)) = v_R(q_E)\cdot \sum_{a=1}^{p-1} \lambda_R(a)\cdot \psi((a/p)-(\infty)) \text{ .}
\end{equation}
\end{conj}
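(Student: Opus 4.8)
Write $f=\sum_{n\ge 1}a_nq^n\in S_2(\Gamma_0(p))$ for the newform attached to $E$, set $I_f=\Ann_{\mathbb{T}^0}(f)$, and let $\mathfrak{m}\supseteq(\ell)+I_f$ be a maximal ideal of $\mathbb{T}^0$. The plan is to realise the two sides of \eqref{equation_MT_ellcurve} as the Betti and the $p$-adic realisations of the $1$-motive attached to $f$, following de Shalit \cite{deShalit_MT}, and then to remove his hypothesis that $\mathfrak{m}$ be non-Eisenstein by means of our work on the generalized cuspidal $1$-motive together with Mazur's theory of the Eisenstein ideal.

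First I would pass from $E$ to $J_0(p)$ through the optimal parametrisation $\pi\colon J_0(p)\twoheadrightarrow E$, working, as de Shalit does, with the (generalized) Jacobian of $X_0(p)$ and its N\'eron model at $p$. Recall that $X_0(p)$ has totally degenerate reduction at $p$ (two copies of the $j$-line crossing transversally at the supersingular points), so $J_0(p)$ has purely toric reduction there, with character group $X=\mathbf{Z}[\mathrm{SS}]^{0}$, the degree-zero part of the free module on the supersingular $j$-invariants, and with cyclic component group $\Phi\simeq\mathbf{Z}/n\mathbf{Z}$, $n$ the numerator of $(p-1)/12$ (Mazur). The $p$-adic (Mumford) uniformisation of $J_0(p)$ carries a period $q\colon X\times X\to\overline{\mathbf{Q}}_p^{\times}$ whose valuations recover the monodromy pairing $u$ and whose $\log_p$ recover a pairing with off-diagonal values involving the $\log_p$ of the differences $j_i-j_j$; projecting to the $f$-component recovers $v_R(q_E)$ and $\lambda_R(q_E)$. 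On the Betti side, $\psi((0)-(\infty))$ is computed by the image of the rational cuspidal class $(0)-(\infty)$ in $\Phi\otimes R$, while $\sum_a\lambda_R(a)\,\psi((a/p)-(\infty))$ is computed, via $u$, from the specialisation in $X\otimes R$ of the cycle $\sum_a\log(a)\,((a/p)-(\infty))$; this is de Shalit's modular-symbol computation. Matching the two gives \eqref{equation_MT_ellcurve} as soon as one may divide by $f$, \ie as soon as $\Symb_{\Gamma_0}(R)_{\mathfrak{m}}$, $X_{\mathfrak{m}}$ and the cuspidal subgroup are free of rank one over $\mathbb{T}^0_{\mathfrak{m}}$. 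This is automatic when $\mathfrak{m}$ is non-Eisenstein; moreover in that case $\bar\rho_{f,\ell}$ is ramified at $p$, whence $\ell\nmid\ord_p(q_E)$ and $v_R(q_E)\in R^{\times}$, so the identity reduces to the mod-$r$ exceptional-zero formula $\sum_a\lambda_R(a)\,\psi((a/p)-(\infty))=\lambda_R(q_E)\,v_R(q_E)^{-1}\,\psi((0)-(\infty))$, which is de Shalit's theorem.

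The remaining, essential, case is $\mathfrak{m}\supseteq I$, the Eisenstein ideal; here $v_R(q_E)$ need no longer be a unit, so \eqref{equation_MT_ellcurve} must be proved exactly as stated, with no division. Now the freeness above fails (for instance, both the rational cuspidal subgroup $C$ and the $\mu$-type Shimura subgroup meet $J_0(p)[\mathfrak{m}]$), so one cannot simply isolate the $f$-eigenspace. The new input is the generalized cuspidal $1$-motive, which assembles $X$, the cuspidal subgroup $C$ and the Eisenstein quotient of $\mathbb{T}^0$ into a single exact sequence, compatibly with the monodromy and $\log_p$-pairings; under this structure $v_R(q_E)$ and $\lambda_R(q_E)$ split into a toric part, which matches $\sum_a\lambda_R(a)\,\psi((a/p)-(\infty))$, and a cuspidal part, which matches $\psi((0)-(\infty))$. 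Using Mazur's computation of $\Phi$, the Gorenstein property of $\mathbb{T}^0_{\mathfrak{m}}$, and our explicit description of this $1$-motive, I would identify the quantity left undetermined by de Shalit's argument with exactly this cuspidal contribution, and verify that it agrees on both sides of the identity.

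The main obstacle is precisely this Eisenstein-case matching: controlling the monodromy and $\log_p$-pairings restricted to the Eisenstein part of the supersingular module $X$ and relating their values to the discrete logarithms $\log(a)$, $a\in(\mathbf{Z}/p\mathbf{Z})^{\times}$, occurring on the right of \eqref{equation_MT_ellcurve}. This is where the structure of the generalized cuspidal $1$-motive becomes indispensable, and it is also the source of the elementary combinatorial identity among the $\log(j_i-j_j)$ advertised in the introduction: establishing that identity (equivalently, pinning down the Eisenstein component of the $p$-adic pairing on $X$) is the principal new ingredient.
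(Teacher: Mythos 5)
Note first that the paper itself does not prove Conjecture~\ref{MT_conj} unconditionally: it removes de~Shalit's no-$\ell$-torsion hypothesis (condition (\ref{deShalit_assumption_torsion}) of Theorem~\ref{MT_deShalit_result}) by establishing the Jacobian-level Theorem~\ref{main_thm} with no non-Eisenstein restriction, but it \emph{retains} the modular-degree hypothesis (condition (\ref{deShalit_assumption_modular_degree})), which is what guarantees $v_R(q_E)\in R^{\times}$. Your outline promises to treat the case $v_R(q_E)\notin R^{\times}$ ``exactly as stated, with no division,'' which is more than the paper establishes and for which you give no concrete mechanism. There are also errors in your Betti-side descriptions: $\psi((0)-(\infty))$ is the value of the modular symbol $\psi$ on the class $\{0,\infty\}\in H_1(X_0(p),\cusps;R)$, not ``the image of the rational cuspidal class in $\Phi\otimes R$''; and $\sum_a\lambda_R(a)\,\psi((a/p)-(\infty))$ is a linear combination of modular-symbol values, not something recovered from the monodromy pairing $u$ applied to a class in $X\otimes R$. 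These are distinct objects living on opposite sides of the argument.

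The real gap is in your Eisenstein paragraph. What the paper actually does is build a $\Lambda$-\emph{free} module $W_1$ deforming $W_0=(J_0^{\sharp}(\overline{\mathbf{Q}}_p)/\overline{q}(N))[r]$, via the $1$-motive $\varphi_1:\mathbf{Z}[\mathcal{C}_1]_0\to J_1^{\sharp}(\overline{\mathbf{Q}})$ (Theorem~\ref{W_1_thm}), then identifies the tame derivative $U_p'$ of $\mathbf{U}_p^2-1$ inside $W_1$ with the Hecke operator $\mathscr{L}_R$ acting on $W_0$ (Theorem~\ref{thm_relation_U_p_L}), and finally invokes de~Shalit's theta-element computation for the modular-symbol identity (\ref{equation_U_p'_modSymb}). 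The new input at the Eisenstein prime is precisely the $\Lambda$-freeness of $W_1$: this is false in de~Shalit's setup, where only $N_0$ rather than the full $N$ enters, and this is exactly why his tame-derivative argument breaks on the Eisenstein component. The generalized cuspidal $1$-motive is used to \emph{produce} $W_1$ (extending the deformation from $N_0$ to $N$), not to ``split $v_R(q_E)$ and $\lambda_R(q_E)$ into a toric part matching $\sum_a\lambda_R(a)\psi((a/p)-(\infty))$ and a cuspidal part matching $\psi((0)-(\infty))$.'' That splitting bears no resemblance to the paper's mechanism and, as far as I can see, is not a statement that can be made precise, let alone proved; as written it would not close into a proof.
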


de Shalit proved the following result.
\begin{thm}\label{MT_deShalit_result}\cite[Theorem 0.3]{deShalit_MT}
Conjecture \ref{MT_conj} holds if the following conditions are satisfied.
\begin{enumerate}
\item \label{deShalit_assumption_torsion} $E$ has no $\ell$-torsion.
\item \label{deShalit_assumption_modular_degree}  The degree of a modular parametrization $X_0(p) \rightarrow E$ is prime to $\ell$.
\end{enumerate}
\end{thm}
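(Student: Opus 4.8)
The plan is to reduce Conjecture~\ref{MT_conj}, under hypotheses~(\ref{deShalit_assumption_torsion}) and~(\ref{deShalit_assumption_modular_degree}), to an analogous statement for the modular Jacobian, to be proved via $p$-adic uniformization; so first we compare $E$ with $J = J_0(p)$. Fix a modular parametrization $\pi \colon X_0(p) \to E$, inducing $\pi_* \colon J \to E$ and $\pi^* \colon E \to J$ with $\pi_* \circ \pi^* = \deg\pi$. Both $J$ and $E$ have split multiplicative reduction at $p$, hence each carries a rigid-analytic Tate uniformization; the character groups $X_J$ and $X_E = \mathbf{Z}$ of the toric parts of the Néron model special fibres are related by $\pi$ compatibly with the monodromy pairings and with the $p$-adic period maps, so that $q_E$, i.e. the pair $(v_R(q_E), \lambda_R(q_E))$, is read off from the period pairing of $J$. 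Hypothesis~(\ref{deShalit_assumption_modular_degree}) makes $\pi_*$ split surjective after tensoring with $R$ (a section being $(\deg\pi)^{-1}\pi^*$, legitimate once $\deg\pi$ is inverted), and hypothesis~(\ref{deShalit_assumption_torsion}) rules out any torsion or component-group contribution obstructing the isolation modulo $r$ of the $E$-isotypic part; together they reduce \eqref{equation_MT_ellcurve} to the corresponding identity on $J$ (more precisely, on an appropriate quotient of $J_1(p)$, at which level the divisors $(a/p)$ become genuine cusps).

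Second, we prove the Mazur--Tate identity on the Jacobian. The two sides of \eqref{equation_MT_ellcurve} are the two components of a single ``Mazur--Tate element'' built from the cuspidal divisors $(0)-(\infty)$ and $(a/p)-(\infty)$, $a \in (\mathbf{Z}/p\mathbf{Z})^\times$. Using the Manin/Eichler--Shimura identification of $\Symb_{\Gamma_0}(R)$ with the relative homology of $X_0(p)$ modulo its cusps, $\psi((0)-(\infty))$ becomes the pairing of $\psi$ against the ``$\lambda$-part'' of the image, in the uniformization $\Hom(X_J,\overline{\mathbf{Q}}_p^\times)/\iota(X_J)$, of a lift of these cuspidal divisors, while $\sum_a \lambda_R(a)\,\psi((a/p)-(\infty))$ is the pairing against its ``$v$-part''. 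The $v$-part is combinatorial: it is computed from the reduction of the cusps to the two components of the special fibre of $X_0(p)$ and from the monodromy pairing $u$ on $X_J$, namely the intersection form on the dual graph of that fibre. The $\lambda$-part is the genuinely $p$-adic datum, governed by the period pairing $q \colon X_J \times X_J \to \overline{\mathbf{Q}}_p^\times$ with $\ord_p \circ q = u$.

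Third, we relate the $\lambda$-part to the $v$-part modulo $r$. The class of $(0)-(\infty)$ is torsion in $J(\mathbf{Q})$, of order $\operatorname{num}((p-1)/12)$ by Mazur, so any lift to $\Hom(X_J,\overline{\mathbf{Q}}_p^\times)$ agrees, up to torsion, with $\iota(\xi)$ for some $\xi \in X_J \otimes \mathbf{Q}$; this ties its $\lambda$- and $v$-coordinates together through $q$ and its valuation $u$. Combining this with the explicit form of the cuspidal class (computed via the Eisenstein ideal) and with de Shalit's formulae for the $p$-adic periods of $X_0(p)$, then pairing against the Hecke eigensymbol $\psi$, produces \eqref{equation_MT_ellcurve} on the Jacobian; via the pushforward of the first step, this yields the identity for $E$, with $q$ replaced by $q_E$.

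The main obstacle lies precisely where the first and third steps meet: controlling everything modulo $r = \ell^t$ when $\ell$ divides $\operatorname{num}((p-1)/12)$ — which, in our setting, it always does, since $\ell \mid p-1$ and $\ell \geq 5$. The torsion cuspidal class sits on the Eisenstein part of the Hecke module, and hypotheses~(\ref{deShalit_assumption_torsion}) and~(\ref{deShalit_assumption_modular_degree}) are imposed exactly to keep that Eisenstein part from interfering with the Hecke eigencomponent cut out by $E$, so that $\pi_*$ with $R$-coefficients, and the extraction of the $\lambda$- and $v$-parts, remain isomorphisms on the $R$-modules in play. Removing these hypotheses — analysing the Mazur--Tate element directly on the Eisenstein part, via Mazur's Eisenstein ideal and the generalized cuspidal $1$-motive — is what the present paper carries out.
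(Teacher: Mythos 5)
Your step 1 (reduce to the Jacobian via the modular parametrization, with hypotheses~(\ref{deShalit_assumption_torsion}) and~(\ref{deShalit_assumption_modular_degree}) ensuring a clean passage to the $E$-isotypic part modulo $r$) correctly describes de Shalit's reduction; note the paper only cites [deShalit\_MT, Theorem~0.3] here and proves nothing itself. But your steps~2 and~3, where the identity on the Jacobian must actually be established, do not capture the argument. The crucial ingredient, entirely absent from your sketch, is the Shimura covering $X_1(p) \to X_0(p)$ and a Greenberg--Stevens-style \emph{tame deformation}: de Shalit constructs $2$-variable theta elements (tame analogues of $2$-variable $p$-adic $L$-functions) and a filtered deformation of the exact sequence~(\ref{filtration_J_0_sharp}) using $J_1^\sharp[r]$, and shows that $\mathscr{L}_R^0$ equals a derivative of ${\bf U}_p^2-1$ in the diamond direction. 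Without this mechanism there is no way to relate the ``$\lambda$-part'' to the ``$v$-part''.

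Your step~3 instead appeals to the torsion of the cuspidal class $(0)-(\infty)$ and to ``de Shalit's formulae for the $p$-adic periods'', and neither of these closes the argument. The order $\operatorname{num}((p-1)/12)$ of the cuspidal class is divisible by $r$ (since $\ell\geq 5$ and $\ell^t\mid p-1$), so the relation $n\cdot[(0)-(\infty)]=0$ is vacuous modulo $r$; more fundamentally, the $\lambda$-coordinate of $q$ on the character lattice is a genuinely transcendental $p$-adic datum that is \emph{not} determined by the $v$-coordinate through any such integrality — that mismatch is exactly what the $\mathscr{L}$-invariant measures, and what the Mazur--Tate conjecture asserts a formula for. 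The explicit period formula of de Shalit (Oesterl\'e's conjecture) is likewise not part of the proof of Theorem~0.3; in the present paper it is used only for the tree formula in Theorem~\ref{corr_alpha_geq_2}. Finally, a telling misattribution: the Eisenstein ideal is not a tool used in de Shalit's proof to ``compute the cuspidal class''; it is precisely the locus where his argument \emph{fails}, and removing the resulting restriction is the contribution of the present paper.
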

\begin{rems}
\begin{enumerate}
\item As de Shalit notes in \cite[\S 6.3]{deShalit_MT}, condition (\ref{deShalit_assumption_torsion}) holds except if $E = X_0(11)$, in which case we may check conjecture \ref{MT_conj} by hand. Thus, although this condition is \textit{a posteriori} not necessary, we emphasize it because our results will allow to remove it \textit{a priori}.
\item The necessity of condition (\ref{deShalit_assumption_modular_degree}) follows from the fact that de Shalit actually works at the level of the Jacobian of $X_0(p)$. Under this condition, we have $v_R(q_E) \in R^{\times}$ so we may rewrite (\ref{equation_MT_ellcurve}) as
\begin{equation}\label{equation_MT_ellcurve_2}
\mathscr{L}_E \cdot \psi(( 0)-(\infty)) = \sum_{a=1}^{p-1} \lambda_R(a)\cdot \psi((a/p)-(\infty)) 
\end{equation}
where $\mathscr{L}_E :=v_R(q_E)^{-1}\cdot  \lambda_R(q_E) \in R$ is the so-called \textit{refined} $\mathscr{L}$-\textit{invariant of}$E$.

\item The original conjecture of Mazur and Tate was equality (\ref{equation_MT_ellcurve}) for a specific and canonical modular symbol $\psi_E$ attached to $E$ (\cf \cite[\S 6.1]{deShalit_MT}). As de Shalit notices \cite[\S 6.2]{deShalit_MT}, this is in fact not necessary and only the Hecke property $T_q\psi = a_q\cdot \psi$ is relevant.
\end{enumerate}
\end{rems}

\subsection{The Mazur--Tate conjecture for the generalized Jacobian of $X_0(p)$}

We now describe the analogue of the Mazur--Tate conjecture for (the split part of) the Jacobian $J_0$ of $X_0(p)$ (defined over $\mathbf{Q}$). The curve $X_0(p)$ has a model over $\mathbf{Z}_p$ whose special fiber is a union of two projective lines intersecting transversally at the supersingular points \cite{Deligne_Rapoport}. We denote by $S$ these supersingular points, \ie the set of isomorphism classes of supersingular elliptic curves over $\overline{\mathbf{F}}_p$. It is well-known that the elements of $S$ are defined over $\mathbf{F}_{p^2}$. Let $\mathbf{Q}_{p^2}$ be the unramified quadratic extension of $\mathbf{Q}_p$ and $\mathbf{Z}_{p^2}$ be its valuation ring. The abelian variety $J_0 \times_{\mathbf{Z}_p} \mathbf{Z}_{p^2}$ thus has split multiplicative reduction at $p$. By the general theory of Mumford curves, $J_0 \times_{\mathbf{Q}_p} \mathbf{Q}_{p^2}$ has a rigid analytic uniformization by a torus, which we now recall following \cite{deShalit}.

We let $N = \mathbf{Z}[S]$ and $N_0$ the augmentation subgroup of $N$; there is a canonical action of $\mathbb{T}$ (resp. $\mathbb{T}^0$) on $N$ (resp. $N_0$). There also is a natural action of $\Gal(\mathbf{F}_{p^2}/\mathbf{F}_p)$, and thus of $\Gal(\overline{\mathbf{Q}}_p /\mathbf{Q}_p)$, on $N$ and $N_0$.

There is a canonical bilinear pairing $Q_0 : N_0 \times N_0 \rightarrow \mathbf{Q}_{p^2}^{\times}$ inducing an isomorphism of rigid analytic spaces  over $\mathbf{Q}_{p^2}$ (hence of $\Gal(\overline{\mathbf{Q}}_p /\mathbf{Q}_{p^2})$-modules) $$J_0(p)(\overline{\mathbf{Q}}_p) \simeq \Hom(N_0,\overline{\mathbf{Q}}_p^{\times})/q_0(N_0)$$ where $q_0 : N_0 \rightarrow \Hom(N_0, \mathbf{Q}_{p^2}^{\times})$ is induced by $Q$ (note that $q_0$ is the analogue of the $p$-adic period $q_E$). In \cite{deShalit}, de Shalit extended $Q_0$ to a bilinear pairing $Q : N \times N \rightarrow \mathbf{Q}_{p^2}^{\times}$. We denote by $q : N \rightarrow \Hom(N, \mathbf{Q}_{p^2}^{\times})$ the induced linear map and by $q_R : N \otimes_{\mathbf{Z}} R \rightarrow \Hom(N \otimes_{\mathbf{Z}} R, \mathbf{Q}_{p^2}^{\times} \otimes_{\mathbf{Z}} R)$ the induced morphism modulo $r$.

We now state some of the properties of $Q$. The modular curve $X_0(p)$ has two cusps, namely the classes of $0$ and $\infty$, which are defined over $\mathbf{Q}$. We denote by $J_0^{\sharp}$ the generalized Jacobian of $X_0(p)$ with respect to the modulus $(0)+(\infty)$. We have an exact sequence of group schemes over $\mathbf{Q}$:
$$1 \rightarrow \mathbf{G}_m \rightarrow J_0^{\sharp} \rightarrow J_0 \rightarrow 1 \text{ .}$$
\begin{prop}\label{Prop_properties_Q}
\begin{enumerate}
\item \label{Prop_properties_Q_i} The pairing $v_R \circ Q : N \times N \rightarrow R$ sends $([E],[E'])$ to $\delta_{E=E'} \cdot w_E$ where $E$, $E'$ $\in S$ and $w_E \in \{1,2,3\}$ is $\frac{\Card(\Aut(E))}{2}$ ($\Aut(E)$ is the automorphism group of $E$ over $\overline{\mathbf{F}}_p$). In particular, $v_R\circ q_R : N \otimes_{\mathbf{Z}} R \rightarrow \Hom(N \otimes_{\mathbf{Z}} R, R)$ is an isomorphism.
\item \label{Prop_properties_Q_ii} The pairing $Q$ is symmetric and $\mathbb{T}$-equivariant, \ie for all $x$, $y$ in $N$ and $T \in \mathbb{T}$, we have $Q(Tx,y) = Q(Ty,x)$.
\item  \label{Prop_properties_Q_iii}  The restriction of $Q$ to $N_0 \times N$ takes values in $\mathbf{Q}_p^{\times}$ and is $\Gal(\overline{\mathbf{Q}}_p /\mathbf{Q}_p)$-equivariant, \ie for all $x \in N_0$, $y\in N$ and $\sigma \in \Gal(\overline{\mathbf{Q}}_p /\mathbf{Q}_p)$, we have $Q(\sigma x, \sigma y)=\sigma Q(x,y) = Q(x,y)$. The same is true for $Q$ (without restricting to $N_0 \times N$) modulo principal units of $\mathbf{Q}_{p^2}^{\times}$.
\item  \label{Prop_properties_Q_iv}  There is a $\mathbb{T}$ and $\Gal(\overline{\mathbf{Q}}_p /\mathbf{Q}_p)$-equivariant group isomorphism
\begin{equation}\label{uniformization_J_0}
J_0^{\sharp}(\overline{\mathbf{Q}}_p) \simeq \Hom(N, \overline{\mathbf{Q}}_p^{\times})/q(N_0) \text{ .}
\end{equation}
\end{enumerate}
\end{prop}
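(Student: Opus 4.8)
The plan is to deduce Proposition~\ref{Prop_properties_Q} from de Shalit's construction of $Q$ in \cite{deShalit}, supplemented where needed by the theory of the generalized cuspidal $1$-motive, treating the four assertions in turn. The unifying tool is the rigid-analytic uniformization coming from the semistable model of $X_0(p)$. Recall that over $\mathbf{Z}_p$ this model has special fibre the union of two copies of $\mathbf{P}^1$ (the ``Igusa lines'') meeting transversally at $S$, so its dual graph $G$ has two vertices joined by one edge $e_E$ of thickness $w_E=\Card(\Aut(E))/2$ for each $E\in S$. Thus the character group of the toric part of the reduction of $J_0$ over $\mathbf{Z}_{p^2}$ is $H_1(G,\mathbf{Z})\cong N_0$, while for $J_0^\sharp$, whose toric part is the extension of that of $J_0$ by the split $\mathbf{G}_m$ attached to the two $\mathbf{Q}$-rational cusps lying on the two distinct components, it is the extension $0\to N_0\to N\to\mathbf{Z}\to 0$ given by the degree map; this is exactly the framework in which de Shalit extends $Q_0$ to $Q$.

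For (\ref{Prop_properties_Q_i}): by Grothendieck's monodromy pairing for the Néron model of $J_0^\sharp$ over $\mathbf{Z}_{p^2}$ --- equivalently, the length pairing on $G$, whose edge at $E$ has thickness $w_E$ by the Deligne--Rapoport description of the model --- the pairing $v_R\circ Q$ is diagonal with $e_E\mapsto w_E$, which on $N$ reads $([E],[E'])\mapsto\delta_{E=E'}w_E$. Since $\ell\geq 5$ is prime to every $w_E\in\{1,2,3\}$, multiplication by $w_E$ is invertible in $R=\mathbf{Z}/r\mathbf{Z}$, so $v_R\circ q_R$ on $N\otimes_{\mathbf{Z}}R$ is a nondegenerate diagonal pairing, hence an isomorphism.

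For (\ref{Prop_properties_Q_ii}) and (\ref{Prop_properties_Q_iii}): symmetry of $Q$ is symmetry of the monodromy pairing. For $q\neq p$ the Hecke correspondence $T_q$ on $X_0(p)$ equals its own transpose, hence is self-adjoint for any pairing built functorially from the model, so $Q(T_qx,y)=Q(x,T_qy)=Q(T_qy,x)$ using symmetry; the remaining operator $T_p=U_p$ is handled via its action on $S$, i.e. via Atkin--Lehner, as in \cite{deShalit}. For the Galois statements, the action of $\Gal(\overline{\mathbf{Q}}_p/\mathbf{Q}_p)$ on $N$ factors through $\Gal(\mathbf{F}_{p^2}/\mathbf{F}_p)$ and $Q(\sigma x,\sigma y)=\sigma Q(x,y)$ because $Q$ is defined over $\mathbf{Q}$; that the restriction to $N_0\times N$ lands in $\mathbf{Q}_p^\times\subset\mathbf{Q}_{p^2}^\times$ (with only the class modulo principal units being $\Gal$-invariant in general) uses the rationality over $\mathbf{Q}$ of the subtorus $\mathbf{G}_m$ and of the cusps $0,\infty$. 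I expect this descent to $\mathbf{Q}_p^\times$ to be the most delicate point, and here I would lean on de Shalit's explicit formulas together with the $1$-motive description.

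For (\ref{Prop_properties_Q_iv}): since $X_0(p)$ has semistable reduction over $\mathbf{Z}_p$ and the cusps specialize to smooth points on the two components, Raynaud's uniformization gives $J_0^\sharp(\overline{\mathbf{Q}}_p)\cong\Hom(N,\overline{\mathbf{Q}}_p^\times)/\Lambda$ for a lattice $\Lambda$ of rank $\Card(S)-1$; because the extending $\mathbf{G}_m$ contributes no periods, $\Lambda$ equals the image $q(N_0)$ of the degree-zero part under the period map, and $\mathbb{T}$- and $\Gal(\overline{\mathbf{Q}}_p/\mathbf{Q}_p)$-equivariance follows from functoriality. The cleanest way to pin down $\Lambda=q(N_0)$ and to match de Shalit's extended $Q$ with the pairing emerging from this uniformization is via the generalized cuspidal $1$-motive attached to $X_0(p)$, which packages the exact sequence $1\to\mathbf{G}_m\to J_0^\sharp\to J_0\to 1$ together with its uniformization; this is the main new input and the step I expect to be the crux.
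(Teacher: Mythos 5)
Your proposal correctly identifies the geometric framework underlying all four assertions (the Deligne--Rapoport semistable model, its dual graph with edge-thicknesses $w_E$, Grothendieck's monodromy pairing, and Raynaud/Mumford uniformization of the generalized Jacobian), but the paper's own ``proof'' is simply a triple of citations: (\ref{Prop_properties_Q_i}) to de Shalit \cite[\S 1.6 (9)]{deShalit}, (\ref{Prop_properties_Q_ii}) and (\ref{Prop_properties_Q_iii}) to \cite[Propositions 3.7 and 3.8]{Lecouturier_MMS}, and (\ref{Prop_properties_Q_iv}) to \cite[\S 2.3]{deShalit_X_1}. In that sense you are not contradicting the paper but unpacking what lies behind the citations, and your unpacking is accurate: the identification $N_0\cong H_1(G,\mathbf{Z})$, the degree-map extension giving $N$ as the character group of the toric part of $J_0^\sharp$, invertibility of $w_E\in\{1,2,3\}$ modulo $\ell\geq 5$, and the fact that for $q\neq p$ the Hecke correspondences are self-adjoint for the monodromy pairing are all correct and are exactly the content of the cited sources. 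Where you write ``I would lean on de Shalit's explicit formulas together with the $1$-motive description'' for the descent in (\ref{Prop_properties_Q_iii}) and call the lattice identification $\Lambda=q(N_0)$ in (\ref{Prop_properties_Q_iv}) ``the crux,'' you have located precisely the two steps that the paper does not reprove but delegates to \cite{Lecouturier_MMS} and \cite{deShalit_X_1}; so your acknowledgment that these require external input is not a gap relative to the paper, it is the same move. The one place I would tighten your phrasing is in (\ref{Prop_properties_Q_iii}): the assertion is a double equality $Q(\sigma x,\sigma y)=\sigma Q(x,y)=Q(x,y)$ for $x\in N_0$, i.e.\ both Galois-equivariance and $\sigma$-invariance of the value, and for general $x,y\in N$ this double equality is claimed only modulo principal units of $\mathbf{Q}_{p^2}^\times$ (not, as you put it, that ``only the class modulo principal units is $\Gal$-invariant''); but this is a wording issue, not a mathematical one.
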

\begin{proof}
Assertion (\ref{Prop_properties_Q_i}) follows from \cite[\S 1.6 (9)]{deShalit}. Assertions (\ref{Prop_properties_Q_ii}) and (\ref{Prop_properties_Q_iii}) follow from \cite[Propositions 3.7 and 3.8]{Lecouturier_MMS}. Assertion (\ref{Prop_properties_Q_iv}) follows from \cite[\S 2.3]{deShalit_X_1}.
\end{proof}

\begin{defn}
The \emph{refined} $\mathscr{L}$\emph{-invariant} of weight $2$ and level $\Gamma_0(p)$ modulo $r$ is 
$$\mathscr{L}_R := (v_R\circ q_R)^{-1}\circ (\lambda_R \circ q_R) \in \End(N \otimes_{\mathbf{Z}} R) \text{ .}$$
\end{defn}

Proposition \ref{Prop_properties_Q} (\ref{Prop_properties_Q_iv}) combined with a result of Emerton \cite{Emerton_Supersingular} allows us to prove the following result.

\begin{prop}\label{L_Hecke_op_prop}
We have $\mathscr{L}_R \in \mathbb{T} \otimes_{\mathbf{Z}} R$. Thus, $\mathscr{L}_R$ acts naturally on $\Symb_{\Gamma_0}(R)$.
\end{prop}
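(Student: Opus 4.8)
The plan is in two stages: first to check that $\mathscr{L}_R$ is a $\mathbb{T}$-equivariant endomorphism of $N\otimes_{\mathbf{Z}}R$, and then to argue that every such endomorphism is multiplication by an element of $\mathbb{T}\otimes_{\mathbf{Z}}R$.

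For the first stage the engine is Proposition \ref{Prop_properties_Q}(\ref{Prop_properties_Q_ii}): symmetry of $Q$ together with $Q(Tx,y)=Q(Ty,x)$ yields $Q(Tx,y)=Q(x,Ty)$ for all $x,y\in N$ and $T\in\mathbb{T}$, i.e.\ each Hecke operator is self-adjoint for $Q$. I would equip $\Hom(N\otimes_{\mathbf{Z}}R, M)$ (for $M=R$ and for $M=\mathbf{Q}_{p^2}^{\times}\otimes_{\mathbf{Z}}R$) with the transpose $\mathbb{T}$-action $(T\cdot f)(x):=f(Tx)$. Self-adjointness is precisely the statement that $q_R$ --- hence also $v_R\circ q_R$ and $\lambda_R\circ q_R$, since $v_R$ and $\lambda_R$ touch only the values --- are $\mathbb{T}$-equivariant for these actions. (That $\lambda_R\circ q_R$ is well defined I would record using Proposition \ref{Prop_properties_Q}(\ref{Prop_properties_Q_iii}): modulo principal units, $Q(x,y)\cdot p^{-\ord_p Q(x,y)}$ is $\Gal(\overline{\mathbf{Q}}_p/\mathbf{Q}_p)$-invariant, hence reduces into $\mathbf{F}_p^{\times}$, where $\log$ is defined.) Since $v_R\circ q_R$ is an isomorphism by Proposition \ref{Prop_properties_Q}(\ref{Prop_properties_Q_i}), its inverse is $\mathbb{T}$-equivariant too, so the composite $\mathscr{L}_R=(v_R\circ q_R)^{-1}\circ(\lambda_R\circ q_R)$ is a $\mathbb{T}$-equivariant endomorphism of $N\otimes_{\mathbf{Z}}R$.

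For the second stage I would use the $\mathbb{T}$-equivariant uniformization of Proposition \ref{Prop_properties_Q}(\ref{Prop_properties_Q_iv}), which exhibits $N$ --- and not merely $N_0$ --- as the character group of the torus uniformizing $J_0^{\sharp}$, together with Emerton's theorem \cite{Emerton_Supersingular} on the supersingular module and the Eisenstein ideal and the classical multiplicity-one statement at the non-Eisenstein maximal ideals. This should give that $N\otimes_{\mathbf{Z}}\mathbf{Z}_{\ell}$ is free of rank one over $\mathbb{T}\otimes_{\mathbf{Z}}\mathbf{Z}_{\ell}$; base-changing along $\mathbf{Z}_{\ell}\twoheadrightarrow R$ then makes $N\otimes_{\mathbf{Z}}R$ free of rank one over $\mathbb{T}\otimes_{\mathbf{Z}}R$. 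In particular $\mathbb{T}\otimes_{\mathbf{Z}}R$ embeds into $\End_R(N\otimes_{\mathbf{Z}}R)$ with image $\End_{\mathbb{T}\otimes_{\mathbf{Z}}R}(N\otimes_{\mathbf{Z}}R)$, and by the first stage $\mathscr{L}_R$ lies in that image, so $\mathscr{L}_R\in\mathbb{T}\otimes_{\mathbf{Z}}R$. The last assertion is then formal: $\mathbb{T}$ acts on $\Symb_{\Gamma_0}(R)$ as recalled above (following \cite[\S 2.5]{deShalit_MT}), and this action extends to $\mathbb{T}\otimes_{\mathbf{Z}}R$ since $\Symb_{\Gamma_0}(R)$ is an $R$-module.

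The main obstacle is the freeness of $N\otimes_{\mathbf{Z}}R$ over $\mathbb{T}\otimes_{\mathbf{Z}}R$ at the Eisenstein prime $\ell\mid p-1$: this is exactly where passing from $(N_0,\mathbb{T}^0)$ to $(N,\mathbb{T})$ is indispensable --- at an Eisenstein $\ell$ the module $N_0$ need not be free over $\mathbb{T}^0$ --- and plugging this gap is precisely what Emerton's work supplies beyond the classical (de Shalit/Mazur) picture. Everything else is a formal manipulation of self-adjoint bilinear pairings.
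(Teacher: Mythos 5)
Your proof is correct and follows essentially the same route the paper indicates, though the paper only gestures at it in a one-sentence remark. The two ingredients you isolate are exactly the right ones: self-adjointness of $Q$ for the Hecke action, via Proposition~\ref{Prop_properties_Q}~(\ref{Prop_properties_Q_ii}), shows that $\mathscr{L}_R$ commutes with $\mathbb{T}$; and Emerton's theorem, which the paper invokes explicitly later in the proof of Lemma~\ref{Lemma_freeness_Hecke_T_1} to assert that $W_0^1 = N\otimes_{\mathbf{Z}} R$ is free of rank one over $\mathbb{T}\otimes_{\mathbf{Z}} R$, upgrades $\mathbb{T}$-equivariance to membership in $\mathbb{T}\otimes_{\mathbf{Z}} R$. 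Your observation that well-definedness of $\lambda_R\circ q_R$ rests on Proposition~\ref{Prop_properties_Q}~(\ref{Prop_properties_Q_iii}) (Galois invariance of $Q$ modulo principal units, forcing the reduction into $\mathbf{F}_p^{\times}$ where $\log$ makes sense) is a genuine point that the paper leaves implicit, and your remark that freeness of $N$ over $\mathbb{T}$ at the Eisenstein prime --- as opposed to freeness of $N_0$ over $\mathbb{T}^0$, which fails --- is the crux is exactly why the paper passes to the generalized Jacobian.
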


We warn the reader that what de Shalit denotes by $\mathscr{L}_R$ in \cite{deShalit_MT} is in fact the image of our $\mathscr{L}_R$ in $\mathbb{T}^0 \otimes_{\mathbf{Z}} R$, which we denote by $\mathscr{L}_R^0$. 

Recall that in $\mathbb{T}$ we have $U_p^2=1$, where $U_p$ is the Hecke operator of index $p$. If $M$ is a $\mathbb{T} \otimes_{\mathbf{Z}} R$-module and $\epsilon \in \{1,-1\}$, we denote by $M^{U_p=\epsilon}$ the largest subspace of $M$ on which $U_p$ acts by multiplication by $\epsilon$. Note that we have $M = M^{U_p=1} \oplus M^{U_p=-1}$ since $p$ is odd. We prove the following result, which is an extension of \cite[Theorem 0.5]{deShalit}.

\begin{thm}[Main theorem]\label{main_thm}
For all $\psi \in \Symb_{\Gamma_0}(R)^{U_p=1}$, we have 
\begin{equation}\label{equation_L_invariant_main}
(\mathscr{L}_R\cdot \psi)((0)-(\infty)) = \sum_{a=1}^{p-1} \lambda_R(a)\cdot \psi((a/p)-(\infty)) \text{ .}
\end{equation}
\end{thm}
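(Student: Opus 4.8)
The plan is to deduce the Main Theorem from de Shalit's theorem \cite[Theorem 0.5]{deShalit} (which is formula \eqref{equation_L_invariant_main} with $\mathscr{L}_R$ replaced by $\mathscr{L}_R^0$ and $\psi$ ranging over modular symbols that are \emph{cuspidal}, i.e. come from $S_2(\Gamma_0(p))$) by carefully tracking the difference between the full Hecke algebra $\mathbb{T}$ and its cuspidal quotient $\mathbb{T}^0$, and between the full space $\Symb_{\Gamma_0}(R)$ and its cuspidal subspace. The key point is that the obstruction in de Shalit's work is exactly the failure of $\mathscr{L}_R$ to be captured by the cuspidal theory: at the Eisenstein prime, the boundary symbol $(0)-(\infty)$ interacts non-trivially with the $\mathbf{G}_m$ sitting inside $J_0^\sharp$, and this is precisely what the generalized cuspidal $1$-motive is designed to measure. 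So the strategy is: (1) prove the identity on the Eisenstein part using the explicit uniformization \eqref{uniformization_J_0} of the \emph{generalized} Jacobian and Proposition \ref{L_Hecke_op_prop}; (2) prove it on the cuspidal (non-Eisenstein) part by citing de Shalit; (3) glue.

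More precisely, first I would set up the Eichler--Shimura-type dictionary identifying $\Symb_{\Gamma_0}(R)^{U_p=1}$ with an appropriate piece of the cohomology/homology of $X_0(p)$, and under this dictionary realize the map $\psi \mapsto \psi((a/p)-(\infty))$ and $\psi \mapsto \psi((0)-(\infty))$ as pairings against elements of $N\otimes_{\mathbf Z} R$ and against the cusps, using that the supersingular module $N$ is the component group / character group showing up in the uniformization of $J_0^\sharp$. The heart of the matter is the defining property of $Q$: combining Proposition \ref{Prop_properties_Q}(\ref{Prop_properties_Q_iii}) and (\ref{Prop_properties_Q_iv}), applying $\lambda_R$ and $v_R$ to the period $q$ evaluated at the class of the cusp $0$ minus the cusp $\infty$ (which lives in $N_0$), one reads off $\lambda_R \circ q_R$ on that class in terms of the sum $\sum_{a=1}^{p-1}\lambda_R(a)\cdot (\text{something indexed by }a)$, where the terms indexed by $a$ are the images in $N\otimes R$ of the Shimura-covering / Igusa-tower data attached to the cusps $a/p$. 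This is where de Shalit's explicit computation of $Q$ on the cusps in \cite{deShalit}, together with my own work \cite{Lecouturier_MMS} on the generalized cuspidal $1$-motive, gets invoked to say that the $a$-th term is $\psi((a/p)-(\infty))$ after pairing with $\psi$; then the definition $\mathscr{L}_R = (v_R\circ q_R)^{-1}\circ(\lambda_R\circ q_R)$ and Proposition \ref{Prop_properties_Q}(\ref{Prop_properties_Q_i}) (which guarantees $v_R\circ q_R$ is invertible, with the class of $(0)-(\infty)$ mapping to the cusp-class under $v_R$) turn this into \eqref{equation_L_invariant_main}.

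I expect the main obstacle to be the Eisenstein component, specifically verifying that the boundary contribution of the $\mathbf{G}_m$-extension is correctly accounted for: one must show that the formula coming from the generalized Jacobian $J_0^\sharp$, rather than $J_0$, does not introduce a spurious correction term on $\Symb_{\Gamma_0}(R)^{U_p=1}$, equivalently that $\mathscr{L}_R$ (and not merely $\mathscr{L}_R^0$) is the operator that governs \eqref{equation_L_invariant_main}. Concretely this requires knowing the behaviour of the pairing $Q$ on $N\times N$ (not just $N_0\times N$) modulo principal units, as in Proposition \ref{Prop_properties_Q}(\ref{Prop_properties_Q_iii}), and matching it against the cuspidal $1$-motive computation; the $U_p=1$ hypothesis is exactly what is needed to make the relevant Eisenstein-localized module behave well (on the $U_p=-1$ part the statement would fail or change). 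A secondary, more bookkeeping obstacle is checking that everything is compatible with the Atkin--Lehner / $U_p$-decomposition $M=M^{U_p=1}\oplus M^{U_p=-1}$ and with the identification of $\mathscr{L}_R$ as an element of $\mathbb{T}\otimes_{\mathbf Z} R$ from Proposition \ref{L_Hecke_op_prop}, so that "$\mathscr{L}_R\cdot\psi$" makes sense and the Hecke-equivariance of $Q$ in Proposition \ref{Prop_properties_Q}(\ref{Prop_properties_Q_ii}) can be used to move $\mathscr{L}_R$ across the pairing.
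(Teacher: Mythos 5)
Your proposal misses the central mechanism of the proof, which is a \emph{deformation/derivative} argument over the Shimura cover $X_1(p)\to X_0(p)$, and your substitute for it does not actually close the gap.

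The paper's proof has a genuine two-step structure that you do not reproduce. Step one constructs, using the $1$-motive $\varphi_1:\mathbf{Z}[\mathcal{C}_1]_0\to J_1^\sharp(\overline{\mathbf{Q}})$ from \cite{Lecouturier_MMS}, a filtered $\Lambda$-module $W_1$ deforming $W_0=(J_0^\sharp(\overline{\mathbf{Q}}_p)/\overline{q}(N))[r]$ (Theorem \ref{W_1_thm}). The new and crucial point — the one de Shalit lacked — is that $W_1$ is \emph{free} over $\Lambda$, which makes the ``tame derivative'' $U_p'$ of ${\bf U}_p^2-1$ a well-defined endomorphism of $W_0$. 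Step two then proves two separate identities: Theorem \ref{thm_relation_U_p_L}, that $U_p'=\mathscr{L}_R$ in $\End_R(W_0)$ (a Galois-cohomological computation using the explicit $\Gal(\overline{\mathbf{Q}}_p/\mathbf{Q}_p)$-action on $W_1^0$ and $W_1^1$ from Theorem \ref{W_1_thm}(\ref{W_1_prop_galois_filtration}), together with the Hecke-freeness results of Lemmas \ref{lemma_control_Hecke} and \ref{Lemma_freeness_Hecke_T_1}); and Theorem \ref{thm_relation_U_p_modSymb}, that $U_p'$ satisfies the modular-symbol identity (\ref{equation_U_p'_modSymb}), which is deduced from de Shalit's two-variable theta-element computation \cite[Proposition 3.14]{deShalit_MT}. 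Your proposal has neither the operator $U_p'$ nor the passage to $\Gamma_1$-level, and without them you have no way to relate $\mathscr{L}_R$ — which by construction is an element of $\End(N\otimes R)$, hence of $\mathbb{T}\otimes R$ — to its action on $\Symb_{\Gamma_0}(R)$ evaluated at $(0)-(\infty)$.

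More concretely, your proposal to ``apply $\lambda_R$ and $v_R$ to the period $q$ evaluated at the class of the cusp $0$ minus the cusp $\infty$ (which lives in $N_0$)'' is based on a confusion: $N_0$ is the augmentation subgroup of $\mathbf{Z}[S]$, where $S$ is the set of supersingular points, and the cusps do not live there. The correct relationship between the cusps and $q$ is Lemma \ref{Lemma_1_motive}(\ref{Lemma_1_phi_0}): the map $\overline{q}:\mathbf{Z}=N/N_0\to J_0^\sharp(\overline{\mathbf{Q}}_p)$ coincides with the algebraic $1$-motive $\varphi_0$ sending a generator to the class of $j^{-1}\oplus j\circ w_p$. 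That is useful precisely because it lets one \emph{deform} $\overline{q}$ to $\varphi_1$ over $\Gamma_1$, not because it lets one directly read off the modular-symbol pairing. Likewise, your idea of gluing an ``Eisenstein part'' argument with a citation of de Shalit for the ``non-Eisenstein part'' is not what the paper does and is unlikely to work as stated: the obstruction in de Shalit's work at the Eisenstein ideal is not a missing separate argument but the failure of freeness of his deformation module over $\Lambda$; the paper fixes this by replacing the cuspidal deformation with the generalized one built from $\varphi_1$, so that Lemma \ref{Lemma_1_motive}(\ref{Lemma_1_phi_1_complex_conjug}) and Nakayama give $\Lambda$-freeness uniformly, after which the entire argument goes through at all maximal ideals simultaneously. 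You correctly sense that the generalized $1$-motive and the $U_p=1$ restriction are where the Eisenstein subtlety is resolved, but the way they enter is through the construction of the free deformation $W_1$ and the trace map $\Symb_{\Gamma_1}(R)\to\Symb_{\Gamma_0}(R)$ in Theorem \ref{thm_relation_U_p_modSymb}, not through a direct evaluation of $Q$ against cusp classes.
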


Notice the similarity between (\ref{equation_MT_ellcurve_2}) and (\ref{equation_L_invariant_main}). de Shalit in fact deduces Theorem \ref{MT_deShalit_result} from \cite[Theorem 0.5]{deShalit_MT}. Theorem \ref{main_thm} allows us to remove assumption (\ref{deShalit_assumption_torsion}) in Theorem \ref{MT_deShalit_result}. In  \cite[Theorem 0.5]{deShalit_MT}, de Shalit only proved (\ref{equation_L_invariant_main}) for modular symbols $\psi$ in a certain subspace of $\Symb_{\Gamma_0}(R)^{U_p=1}$, which is a proper subspace precisely when localizing at the Eisenstein ideal. In other words, we remove the condition ``non-Eisenstein'' in \cite[Theorem 5.6]{deShalit_MT}.

\subsection{Strategy of the proof}
We give a rough overview of the proof of Theorem \ref{main_thm}. The strategy is similar to the one of de Shalit, which is itself inspired by the proof by Ralph Greenberg and Glenn Stevens of the Mazur--Tate--Teitelbaum conjecture \cite{Greenberg_Stevens}. The key player is the Shimura covering $X_1(p) \rightarrow X_0(p)$. There are two main steps. 

The first step is to prove (\ref{equation_L_invariant_main}) where $\mathscr{L}_R$ is replaced by a certain Hecke operator which is a ``tame'' derivative (with respect to the diamond operators) of the operator $U_p^2-1$ of level $\Gamma_1(p)$. The analogous statement for $\mathscr{L}_R^0$ was tackled by de Shalit in \cite[\S 3]{deShalit_MT} using $2$-variables theta elements (``tame'' analogues of $2$-variable $p$-adic L functions).

The second step is to relate $\mathscr{L}_R$ to the above operator. We begin by briefly recalling de Shalit's strategy. By Proposition \ref{Prop_properties_Q} (\ref{Prop_properties_Q_iv}), there is a filtration of $R[\Gal(\overline{\mathbf{Q}}_p /\mathbf{Q}_p)]$-modules
\begin{equation}\label{filtration_J_0_sharp}
0 \rightarrow \Hom(N, \mu_r) \rightarrow J_0^{\sharp}[r] \rightarrow N_0 \otimes_{\mathbf{Z}} R \rightarrow 0 \text{ ,}
\end{equation}
where $\mu_r$ is the module of $r$th root of unity and $ J_0^{\sharp}[r]$ is the $r$ torsion in $J_0^{\sharp}(\overline{\mathbf{Q}}_p)$. de Shalit constructed a deformation of this filtration by considering the $r$-torsion of the generalized Jacobian $J_1^{\sharp}$ of $X_1(p)$ with respect to the reduced cuspidal modulus. This filtered deformation enabled de Shalit to relate $\mathscr{L}_R^0$ to the tame derivative of $U_p^2-1$. 

Proposition \ref{Prop_properties_Q} (\ref{Prop_properties_Q_iv}) only gives a modular interpretation of the restriction of $Q$ to $N_0 \times N$. To get information on $\mathscr{L}_R$ and not only on $\mathscr{L}_R^0$, we need to consider the full pairing $Q$. This pairing yields a $1$-motive $\mathbf{Z} = N/N_0 \rightarrow J_0^{\sharp}$. One can in fact describe algebraically this $1$-motive, and prove that it takes values in $J_0^{\sharp}(\mathbf{Q})$ \cite[Theorem 1.5]{Lecouturier_MMS}. This $1$-motive provides a Galois-equivariant extension of $N \otimes_{\mathbf{Z}} R$ by $\Hom(N, \mu_r)$, which is characterized by $\mathscr{L}_R$. The main point in the two steps above is then to construct a deformation of this extension. This is done in Theorem \ref{W_1_thm} by using the results of \cite[Theorem 1.5]{Lecouturier_MMS}, where we construct a $1$-motive $\mathbf{Z}[\cusps]_0 \rightarrow J_1^{\sharp}$. 

\subsection{Applications using Mazur's Eisenstein ideal}\label{intro_Eisenstein_ideal}

Let $I \subset \mathbb{T}$ be the Eisenstein ideal, \ie the ideal generated by the elements $T_q-q-1$ for primes $q \neq N$ and $U_N-1$. 

\begin{question}
Let $s$ be an integer such that $1 \leq s \leq t = \ord_{\ell}(p-1)$. Can we determine $$\alpha(p, \ell, s) := \sup \{n\in \mathbf{Z}_{\geq 0}, \text{ } \mathscr{L}_R \cdot (\mathbb{T} \otimes_{\mathbf{Z}} \mathbf{Z}/\ell^s\mathbf{Z}) \subset I^{n}\cdot (\mathbb{T} \otimes_{\mathbf{Z}} \mathbf{Z}/\ell^s\mathbf{Z}) \} \text{ ?}$$
\end{question}
Obviously, $\alpha(p, \ell, s)$ is a decreasing function of $s$. The first result we prove toward this question is the following.

\begin{thm}\label{thm_alpha_geq_2}
We have $\alpha(p, \ell, t) \geq 2$. 
\end{thm}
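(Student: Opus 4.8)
The plan is to show that $\mathscr{L}_R$ kills the Eisenstein-quotient part of $\mathbb{T} \otimes_{\mathbf{Z}} R$ to second order, i.e.\ that $\mathscr{L}_R \cdot (\mathbb{T} \otimes_{\mathbf{Z}} R) \subset I^2 \cdot (\mathbb{T} \otimes_{\mathbf{Z}} R)$, where $s = t$ so $R = \mathbf{Z}/r\mathbf{Z}$. The key input is that $\mathscr{L}_R$ is an honest Hecke operator (Proposition \ref{L_Hecke_op_prop}), so the question is about the image of $\mathscr{L}_R$ in the Eisenstein completion $(\mathbb{T} \otimes_{\mathbf{Z}} R)_I$, which by Mazur's work is a quotient of a local ring with residue field $\mathbf{F}_\ell$ and tangent space of dimension one over $\mathbf{F}_\ell$. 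First I would reduce modulo $I$: since $v_R \circ q_R$ is an isomorphism (Proposition \ref{Prop_properties_Q}(\ref{Prop_properties_Q_i})), it suffices to study $\lambda_R \circ q_R \bmod I$, and I claim this vanishes. This should follow from Proposition \ref{Prop_properties_Q}(\ref{Prop_properties_Q_iii}): the full pairing $Q$ is $\Gal(\overline{\mathbf{Q}}_p/\mathbf{Q}_p)$-equivariant modulo principal units, hence $\lambda_R \circ Q$ is (up to the Eisenstein part, where $N \otimes R$ has a trivial-action quotient/sub) controlled by local Galois cohomology, which forces $\mathscr{L}_R \equiv 0 \pmod I$. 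Concretely, on the Eisenstein line the operator $U_p - 1 \in I$ and $U_p^2 = 1$ show $U_p \equiv 1$, and the period pairing degenerates: this gives $\mathscr{L}_R \in I \cdot (\mathbb{T}\otimes R)$, i.e.\ $\alpha(p,\ell,t) \geq 1$.

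To push to $\alpha \geq 2$ one must work one order deeper in the Eisenstein completion. Here I would use the $1$-motive $\mathbf{Z} = N/N_0 \to J_0^{\sharp}$ from \cite[Theorem 1.5]{Lecouturier_MMS}, which gives a canonical Galois extension of $N \otimes_{\mathbf{Z}} R$ by $\Hom(N, \mu_r)$ whose class records $\mathscr{L}_R$, together with the modular deformation of this extension built in Theorem \ref{W_1_thm} over $X_1(p)$. The point is that the first-order vanishing already established means the relevant extension class lies in $I \cdot (\text{something})$; computing it one step further reduces to evaluating the tame derivative of $U_p^2 - 1$ along the diamond operators against an Eisenstein eigenvector, and the known structure of Mazur's Eisenstein ideal (the fact that $I$ is principal at Eisenstein primes, generated by a single element whose square measures $\log$ of a cyclotomic unit / a Merel-type determinant) shows this derivative itself lies in $I$. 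Chaining the two orders gives $\mathscr{L}_R \cdot (\mathbb{T}\otimes R) \subset I^2 \cdot (\mathbb{T}\otimes R)$, hence $\alpha(p,\ell,t) \geq 2$.

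The main obstacle I expect is the second-order computation: the first-order statement $\mathscr{L}_R \in I$ is essentially a repackaging of Proposition \ref{Prop_properties_Q}(\ref{Prop_properties_Q_iii}), but showing the \emph{next} coefficient of $\mathscr{L}_R$ in the $I$-adic filtration also lies in $I$ requires genuinely exploiting the deformation in Theorem \ref{W_1_thm} and matching it against Mazur's description of $\mathbb{T}_I$. In particular one needs that the extension class of the generalized cuspidal $1$-motive, when pushed into the Eisenstein completion and read modulo $I^2$, is \emph{not} a generator of the corresponding $\Ext$ group — equivalently, that the deformation of the filtration \eqref{filtration_J_0_sharp} coming from $J_1^{\sharp}$ is split to first order on the Eisenstein part. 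I would verify this by a direct cohomological comparison of the extension coming from Theorem \ref{W_1_thm} with the standard Eisenstein extension (Mazur's $\mathbf{Z}/\ell \to E[\ell] \to \mu_\ell$), reducing finally to the non-vanishing/vanishing of an explicit cyclotomic quantity attached to $p$ and $\ell$. The other steps — the reduction via $v_R \circ q_R$ being invertible, and the bookkeeping with $U_p = \pm 1$ decompositions — are routine given the results quoted above.
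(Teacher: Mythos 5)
Your overall philosophy — combine Theorem~\ref{main_thm} with the structure of Mazur's Eisenstein ideal — is correct, but you never actually carry out the decisive second step, and the route you propose for it is both heavier than necessary and still left as an unfinished program. In the paper, Theorem~\ref{main_thm} is treated as a black box and rewritten as the single identity in $H_+$
$$\mathscr{L}_s\cdot \{0, \infty\} = \tfrac{1}{2}(U_p+1)\sum_{a \in (\mathbf{Z}/p\mathbf{Z})^{\times}} \log(a)\cdot \{\tfrac{a}{p}, \infty\},$$
where the right-hand side visibly lies in $(H^0)_+$. Since $I\cdot H_+ = (H^0)_+$ (Theorem~\ref{thm_Eisenstein_ideal}(\ref{thm_Eisenstein_ideal_i})) and $H_+$ is locally free of rank one at the Eisenstein maximal ideal (Proposition~\ref{Prop_criterion_Eisenstein}), this gives $\mathscr{L}_s \in I$ immediately. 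The $I^2$ step then reduces, via Mazur's explicit isomorphism $(H^0)_+/I\cdot(H^0)_+ \xrightarrow{\sim} \mathbf{Z}/\ell^s\mathbf{Z}$ sending $\sum_a \lambda_a\{\infty,\tfrac{a}{p}\} \mapsto \sum_a \lambda_a\log(a)$ (Theorem~\ref{thm_Eisenstein_ideal}(\ref{thm_Eisenstein_ideal_ii})), to the elementary cyclotomic identity
$$\sum_{a \in (\mathbf{Z}/p\mathbf{Z})^{\times}} \log(a)^2 \equiv 0 \pmod{\ell^s}.$$
This sum over a cyclic group vanishes for the trivial reason that $\ell \geq 5$. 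That is the whole proof.

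Your proposal, by contrast, tries to establish $\mathscr{L}_R \in I$ via a Galois-cohomological argument from Proposition~\ref{Prop_properties_Q}(\ref{Prop_properties_Q_iii}) — which "should follow" but is never made precise — and then proposes to detect $\mathscr{L}_R \in I^2$ by re-entering the $W_1$ deformation of Theorem~\ref{W_1_thm} and comparing the tame derivative of ${\bf U}_p^2-1$ against an Eisenstein eigenvector. You explicitly flag this second-order step as the "main obstacle," reduce it to an unnamed "explicit cyclotomic quantity," and leave it there. That unnamed quantity is exactly $\sum_a \log(a)^2$, and the work needed to reach it is just the homological bookkeeping in $\mathbb{T}$ above, not a fresh Galois-cohomological deformation computation. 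So the gap is concrete: you never produce the elementary identity that does the work, and the machinery you propose to replace it with (the $W_1$ route) is essentially re-proving the input Theorem~\ref{main_thm} rather than using it. The paper's reduction to $\sum_a\log(a)^2 \equiv 0$ is the idea your proof is missing.
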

The proof is a simple combination of Theorem \ref{main_thm} and basic facts due to Mazur concerning the Eisenstein ideal. Combining Theorem \ref{thm_alpha_geq_2} and \cite[\S 1.6 Main thm]{deShalit}, one can prove the following elementary identity, for which we do not have an elementary proof.

\begin{thm}\label{corr_alpha_geq_2}
Assume $p\equiv 1 \text{ (modulo }12\text{)}$, \ie for that for all $E \in S$  we have $w_E=1$. Let $\mathcal{T}(S)$ be the set of spanning trees of the complete graph with vertices in $S$. If $T \in \mathcal{T}(S)$, let $\mathcal{E}(T)$ be the set of edges of $T$. If $E \neq E'$ are in $S$, let $[E,E']$ be the edge between $E$ and $E$. We have:
\begin{equation}\label{tree_formula}
\sum_{T \in \mathcal{T}(S)} \prod_{[E,E'] \in \mathcal{E}(T)} \log((j(E')-j(E))^{p+1})=0 \text{ .}
\end{equation}
\end{thm}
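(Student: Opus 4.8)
The plan is to write down the matrix of $\mathscr{L}_R$ in the natural basis of $N\otimes_{\mathbf{Z}}R$, recognize it as the negative of a weighted graph Laplacian whose off‑diagonal entries are the numbers $\log((j(E')-j(E))^{p+1})$, and then read off (\ref{tree_formula}) from the matrix--tree theorem once the relevant cofactor is shown to vanish.

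First I would use the basis $\{[E]\}_{E\in S}$ of $N\otimes_{\mathbf{Z}}R$ and the dual basis to identify $\Hom(N\otimes_{\mathbf{Z}}R,R)$ with $N\otimes_{\mathbf{Z}}R$. Since $p\equiv 1\pmod{12}$ we have $w_E=1$ for every $E$ by Proposition~\ref{Prop_properties_Q}(\ref{Prop_properties_Q_i}), so $v_R\circ q_R$ is the identity matrix, and hence $\mathscr{L}_R$ is represented by the symmetric matrix $A$ with $A_{E,E'}=\lambda_R(Q([E],[E']))$. By de Shalit's computation of the $p$-adic period pairing \cite[\S 1.6 Main thm]{deShalit} one has $A_{E,E'}=\log((j(E')-j(E))^{p+1})=:w_{[E,E']}$ for $E\neq E'$. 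Now I would bring in Theorem~\ref{thm_alpha_geq_2}: it gives $\mathscr{L}_R\in I^{2}(\mathbb{T}\otimes_{\mathbf{Z}}R)$, and in particular $\mathscr{L}_R\in I(\mathbb{T}\otimes_{\mathbf{Z}}R)$. The latter forces, on the one hand, $\mathscr{L}_R\mathbf{v}=0$ for the Eisenstein vector $\mathbf{v}=\sum_{E\in S}[E]$ (which is annihilated by $I$, since $T_q\mathbf{v}=(q+1)\mathbf{v}$ and $U_p\mathbf{v}=\mathbf{v}$), so that $A\mathbf{1}=0$; being symmetric, $A$ must then have $A_{E,E}=-\sum_{E'\neq E}w_{[E,E']}$, so $A=-L$, the weighted Laplacian of the complete graph on $S$. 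On the other hand, the composite of $\mathscr{L}_R$ with the augmentation $N\otimes_{\mathbf{Z}}R\to R$ vanishes, so $\mathscr{L}_R$ takes values in $N_0\otimes_{\mathbf{Z}}R$. By the matrix--tree theorem (a polynomial identity, hence valid over $R$), $\mathrm{adj}(L)=\tau J$ with $J$ the all-ones matrix and $\tau$ the left-hand side of (\ref{tree_formula}); equivalently, up to the sign $(-1)^{|S|-1}$, $\tau$ equals the determinant $\det M$ of the matrix $M$ obtained from $A$ by deleting the row and column indexed by a chosen $E_0\in S$. So the theorem reduces to $\det M=0$.

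To prove $\det M=0$ I would decompose $M=M'+c\,\mathbf{1}^{T}$, where $M'$ is the matrix (in the basis $\{[E]-[E_0]\}_{E\neq E_0}$ of $N_0\otimes_{\mathbf{Z}}R$) of $\mathscr{L}_R$ restricted to $N_0\otimes_{\mathbf{Z}}R$ and $c$ is the coordinate vector of $\mathscr{L}_R[E_0]\in N_0\otimes_{\mathbf{Z}}R$, so that $\det M=\det M'+\mathbf{1}^{T}\mathrm{adj}(M')\,c$ by the matrix-determinant lemma. Both terms vanish after localizing at the Eisenstein maximal ideal $\mathfrak{m}$ of $\mathbb{T}$, using two inputs: Mazur's theorem that $I^0_{\mathfrak{m}}$ is principal with generator $\xi$ satisfying $\mathbb{T}^0_{\mathfrak{m}}/\xi\cong\mathbf{Z}/r$, and Emerton's result \cite{Emerton_Supersingular} that $N_{0,\mathfrak{m}}$ is free of rank one over $\mathbb{T}^0_{\mathfrak{m}}$. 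Indeed $\det M'$ is (up to a unit) the norm of $\mathscr{L}_R^{0}\in I^0_{\mathfrak{m}}=(\xi)$, and $\Norm(\xi)$ reduces to $0$ modulo $r$ because its $\ell$-adic valuation equals the $\mathbf{Z}_\ell$-length of $\mathbb{T}^0_{\mathfrak{m}}/\xi=\mathbf{Z}/r$; this gives $\det M'=0$. For the second term, here is where $\mathscr{L}_R\in I^{2}$ (rather than just $I$) is essential: writing $\mathscr{L}_R=\sum_k\alpha_k\beta_k$ with $\alpha_k,\beta_k\in I$ shows that $\mathscr{L}_R$ sends $N\otimes_{\mathbf{Z}}R$ first into $N_0\otimes_{\mathbf{Z}}R$ and then into $I^0(N_0\otimes_{\mathbf{Z}}R)$, so $c$ lies in $\xi\cdot(N_{0,\mathfrak{m}}\otimes_{\mathbf{Z}}R)$ after localizing; since $\mathbf{v}$ is Eisenstein, it lies in the $\mathfrak{m}$-isotypic summand, so $\mathbf{1}^{T}\mathrm{adj}(M')\,c$ is computed entirely in that summand, where $\mathrm{adj}(M')$ is multiplication by an element $b$ with $b\xi=0$ (because $\Norm(\xi)=0$ in $R$); hence $\mathrm{adj}(M')\,c=0$ there and $\mathbf{1}^{T}\mathrm{adj}(M')\,c=0$. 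Thus $\det M=0$ and $\tau=0$.

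The hard part is this last step. Reducing the theorem to a determinant is routine, but converting $\mathscr{L}_R\in I^{2}$ into the exact vanishing of $\det M$ — not merely its divisibility by $\ell$ — genuinely needs the fine integral structure of the Eisenstein ideal at prime level and the Hecke-freeness of the supersingular module, and one has to handle carefully the discrepancy between the natural bases of $N\otimes_{\mathbf{Z}}R$ and $N_0\otimes_{\mathbf{Z}}R$ (this is what creates the correction term $c\,\mathbf{1}^{T}$). A subsidiary point is to make sure that $A$ really equals $-L$ on the nose: this rests on the precise normalization in de Shalit's period formula \cite[\S 1.6]{deShalit} and on the compatibility of the $\mathbb{T}$-action on $N$ with the Eisenstein ideal, in particular that $U_p$ acts on $N$ by the Frobenius permutation of supersingular points, so that $\mathbf{v}$ is killed by $U_p-1$.
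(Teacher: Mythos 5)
Your proposal matches the paper's proof up through the reduction: you correctly identify $A=-L$ (the negative weighted Laplacian) using de Shalit's period formula and the fact that $\mathscr{L}_R$ kills the Eisenstein vector, and correctly invoke the weighted matrix--tree theorem to reduce the statement to the vanishing of a single cofactor $\det M$. The paper does exactly the same reduction. Where the two diverge is in proving $\det M=0$, and your argument for that step has a gap.

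The paper's route is direct: from $\mathscr{L}_R\in I^2\cdot(\mathbb{T}\otimes_{\mathbf{Z}}R)$ and the theory of higher Eisenstein elements, the kernel of $\mathscr{L}_R$ in $N\otimes_{\mathbf{Z}}R$ contains the free rank-two $R$-module $R e_0\oplus R e_1$ (both $e_0$ and $e_1$ are killed because $I e_0=0$ and $I e_1\subset Re_0$). It then invokes an elementary lemma: if the kernel of a square matrix over $\mathbf{Z}/q\mathbf{Z}$ contains a free rank-two submodule, then \emph{all} first minors vanish. This uses $\mathscr{L}_R\in I^2$ precisely once, to get the second kernel element $e_1$; the lemma does the rest.

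Your route instead decomposes $\det M=\det M'+\mathbf{1}^{T}\mathrm{adj}(M')\,c$. The first term is handled correctly: $\det M'$ is (up to the non-Eisenstein factors) $\Norm(\mathscr{L}_R^0)$, and $\Norm(\xi)\equiv 0\pmod r$ by the $\ell$-length computation, so $\det M'=0$; note this only uses $\mathscr{L}_R\in I$, not $I^2$. The gap is in the second term. Two points: (a) the reason the computation localizes to the Eisenstein component is not ``since $\mathbf{v}$ is Eisenstein'' --- the all-ones row vector $\mathbf{1}^{T}$ is not the Eisenstein functional, it is (up to sign) the dual basis vector $[E_0]^{*}$ --- the correct reason is that $\det(M'_{\mathfrak{m}})=0$ forces $\mathrm{adj}(M')$ to annihilate every non-Eisenstein block, so the $\mathfrak{m}$-localization is what survives; this part is salvageable. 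But (b) the key claim ``$\mathrm{adj}(M')|_{\mathfrak{m}}$ is multiplication by $b$ with $b\xi=0$ because $\Norm(\xi)=0$ in $R$'' does not follow. What the adjugate identity gives you is $b\cdot\mathscr{L}_R^{0}=\det M'=0$, i.e. $b\,u\,\xi^{2}=0$ if you write $\mathscr{L}_R^{0}=u\xi^{2}$ in $\mathbb{T}^{0}_{\mathfrak{m}}\otimes R$. Since $u$ is not a unit (that is precisely the content of $\alpha\geq 2$!), one cannot cancel and conclude $b\xi=0$. Concretely, in a model such as $\mathbb{T}^{0}_{\mathfrak{m}}=\mathbf{Z}_{\ell}[x]/(x^{2}-\ell x)$ with $\xi=x$ and $t>2$, one can check that $\mathrm{adj}(M'_{\mathfrak{m}})\cdot\xi\neq 0$, so the claimed vanishing is simply false at the level you assert it. Your argument is thus missing the one genuinely new input at the Eisenstein prime; the paper supplies it via the higher Eisenstein element $e_1$, and there does not seem to be a purely norm/adjugate-theoretic substitute.
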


We next give a criterion for $\alpha(p, \ell, s) \geq 3$. Let $K$ be the unique extension of $\mathbf{Q}$ of degree $\ell^t$ inside $\mathbf{Q}(\zeta_p)$ and $\mathcal{O}_{K}$ be the ring of integers of $K$. Let $\mathcal{K}_s = K_2(\mathcal{O}_K[\frac{1}{\ell p}]) \otimes_{\mathbf{Z}} \mathbf{Z}/\ell^s\mathbf{Z}$, where if $A$ is a ring we denote by $K_2(A)$ the second Quillen $K$-group of $A$. There is a canonical action of $\mathbf{Z}[\Gal(K/\mathbf{Q})]$ on $\mathcal{K}_s$. If $x$, $y$ $\in \mathbf{Z}[\zeta_p, \frac{1}{\ell p}]^{\times}$, we denote by $\{x,y\} \in K_2(\mathbf{Z}[\zeta_N, \frac{1}{\ell p}])$ the corresponding Steinberg symbol and by $(x,y)$ the image of $\{x,y\}$ under the norm map $K_2(\mathbf{Z}[\zeta_p, \frac{1}{\ell p}]) \rightarrow \mathcal{K}_s$. 

\begin{thm}\label{thm_alpha_geq_3}
We have $\alpha(p, \ell, s) \geq 3$ if and only if the element
\begin{equation}\label{log_steinberg_eq}
\sum_{a \in (\mathbf{Z}/N\mathbf{Z})^{\times}} \log(a) \cdot (1-\zeta_p^a, 1-\zeta_p) \in J\cdot \mathcal{K}_s
\end{equation}
belongs to $J^2\cdot \mathcal{K}_s$, where $J$ is the augmentation ideal of $\mathbf{Z}[\Gal(K/\mathbf{Q})]$.
\end{thm}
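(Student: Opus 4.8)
The plan is to translate the condition $\alpha(p,\ell,s)\geq 3$ --- which, since $1\in\mathbb{T}$ and $\mathscr{L}_R\in\mathbb{T}\otimes_{\mathbf{Z}}R$ by Proposition \ref{L_Hecke_op_prop}, simply asks that the image of $\mathscr{L}_R$ in $\mathbb{T}\otimes_{\mathbf{Z}}\mathbf{Z}/\ell^s\mathbf{Z}$ lie in $I^3\cdot(\mathbb{T}\otimes_{\mathbf{Z}}\mathbf{Z}/\ell^s\mathbf{Z})$ --- into the vanishing of a ``higher Eisenstein number'' attached to the Mazur--Tate element, and then to identify that number with the class of \eqref{log_steinberg_eq} in $J^2\mathcal{K}_s/J^3\mathcal{K}_s$. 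First I would pass to the Eisenstein maximal ideal: for $n\geq1$ the module $(\mathbb{T}/I^n)\otimes\mathbf{Z}/\ell^s\mathbf{Z}$ is supported at the unique Eisenstein maximal ideal $\mathfrak{m}=(I,\ell)$ of $\mathbb{T}$ above $\ell$ (here one uses $\ell\mid p-1$ and $\ell\geq5$), so everything happens in $\mathbb{T}_{\mathfrak{m}}$. By Mazur's theory of the Eisenstein ideal, $\mathbb{T}_{\mathfrak{m}}$ is Gorenstein, the relevant localised space of modular symbols $\Symb_{\Gamma_0}(\mathbf{Z}_\ell)_{\mathfrak{m}}=\Symb_{\Gamma_0}(\mathbf{Z}_\ell)^{U_p=1}_{\mathfrak{m}}$ (the $U_p=-1$ part vanishes after localising at $\mathfrak{m}$ since $U_p-1\in\mathfrak{m}$) is free of rank one over $\mathbb{T}_{\mathfrak{m}}$, and the boundary evaluation $\psi\mapsto\psi((0)-(\infty))$ provides a generator of its $\mathbb{T}_{\mathfrak{m}}$-linear dual; hence the largest power of $I$ dividing an element of $\mathbb{T}_{\mathfrak{m}}$ can be read off by pairing against modular symbols and evaluating at $(0)-(\infty)$.

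Next, by Theorem \ref{main_thm}, on $\Symb_{\Gamma_0}(\mathbf{Z}/\ell^s\mathbf{Z})^{U_p=1}$ the operator $\mathscr{L}_R$ is characterised by $(\mathscr{L}_R\psi)((0)-(\infty))=\sum_{a=1}^{p-1}\lambda_R(a)\,\psi((a/p)-(\infty))$; that is, under the duality just recalled, $\mathscr{L}_R$ corresponds to the Mazur--Tate element $\Theta_R:=\sum_{a=1}^{p-1}\lambda_R(a)\cdot((a/p)-(\infty))$. Therefore the image of $\mathscr{L}_R$ lies in $I^3\cdot(\mathbb{T}\otimes\mathbf{Z}/\ell^s\mathbf{Z})$ if and only if $\Theta_R$ annihilates $\Symb_{\Gamma_0}(\mathbf{Z}/\ell^s\mathbf{Z})_{\mathfrak{m}}[I^3]$. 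Since $\alpha$ is decreasing in $s$ we have $\alpha(p,\ell,s)\geq\alpha(p,\ell,t)\geq2$ by Theorem \ref{thm_alpha_geq_2}, so $\mathscr{L}_R$ already lies in $I^2\cdot(\mathbb{T}\otimes\mathbf{Z}/\ell^s\mathbf{Z})$, and the remaining condition is the vanishing of a single element $\kappa_s\in\mathbf{Z}/\ell^s\mathbf{Z}$ --- the image of $\mathscr{L}_R$ in a suitable cyclic quotient of $I^2/I^3$, obtained by pairing $\Theta_R$ against the ``second'' higher Eisenstein modular symbol.

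To compute $\kappa_s$ I would use the generalized cuspidal $1$-motive. By Theorem \ref{W_1_thm}, $\mathscr{L}_R$ is the class of the $\Gal(\overline{\mathbf{Q}}_p/\mathbf{Q}_p)$-equivariant extension of $N\otimes_{\mathbf{Z}}R$ by $\Hom(N,\mu_r)$ determined by the $1$-motive $\mathbf{Z}=N/N_0\to J_0^{\sharp}$, together with the deformation of this extension obtained from the $1$-motive $\mathbf{Z}[\cusps]_0\to J_1^{\sharp}$ of \cite[Theorem 1.5]{Lecouturier_MMS}. The essential input is that \cite[Theorem 1.5]{Lecouturier_MMS} realises this last $1$-motive algebraically over $\mathbf{Q}$ and expresses it in terms of Siegel units on $X_1(p)$, whose divisors are supported on the cusps and whose tame symbols along cuspidal divisors are, up to roots of unity, the cyclotomic units $1-\zeta_p^a$. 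Expanding the extension class along the diamond action --- that is, along the filtration by powers of the augmentation ideal $J$ of $(\mathbf{Z}/\ell^s\mathbf{Z})[\Gal(K/\mathbf{Q})]$, with $\Gal(K/\mathbf{Q})\simeq(\mathbf{Z}/p\mathbf{Z})^{\times}/\{\pm1\}$ acting through diamond operators --- the second-order term is a cup product of two such Siegel units; pushing it to $K$ via the norm $K_2(\mathbf{Z}[\zeta_p,\tfrac{1}{\ell p}])\to\mathcal{K}_s$ and using $\lambda_R$ to read off the coefficient of each contribution gives exactly $\sum_{a\in(\mathbf{Z}/p\mathbf{Z})^{\times}}\log(a)\cdot(1-\zeta_p^a,1-\zeta_p)$. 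This identifies $\kappa_s$, up to a unit of $\mathbf{Z}/\ell^s\mathbf{Z}$, with the image of \eqref{log_steinberg_eq} in $J^2\mathcal{K}_s/J^3\mathcal{K}_s$ --- the element itself lying in $J\mathcal{K}_s$, and its class in $J\mathcal{K}_s/J^2\mathcal{K}_s$ vanishing because $\alpha(p,\ell,s)\geq2$ --- which yields the stated equivalence.

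The hard part will be this last identification: one has to match the $I$-adic filtration on $\mathbb{T}_{\mathfrak{m}}$, in which $\mathscr{L}_R$ sits in degree $\geq2$, with the augmentation filtration on $\mathcal{K}_s$, in which the symbol of \eqref{log_steinberg_eq} sits in degree $\geq1$, and in particular to explain the degree-one shift between them --- a shift reflecting that $\mathscr{L}_R=(v_R\circ q_R)^{-1}\circ(\lambda_R\circ q_R)$ is itself already a logarithmic derivative of the period pairing $Q$ --- all while keeping track of the precise unit constant and of compatibility as $s$ ranges over $1\leq s\leq t$. Making this rigorous is where the explicit algebraic description of the generalized cuspidal $1$-motive in \cite[Theorem 1.5]{Lecouturier_MMS}, de Shalit's two-variable theta elements \cite[\S 3]{deShalit_MT}, and the filtered deformation of \eqref{filtration_J_0_sharp} provided by Theorem \ref{W_1_thm} have to be combined; the $K$-theoretic half of the comparison is of the same nature as Sharifi's reciprocity law, and in this prime-level situation it can be obtained from the theory of modular units together with the Mazur--Wiles theorem.
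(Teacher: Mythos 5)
Your opening reduction is the same as the paper's: reformulate Theorem \ref{main_thm} as the single identity $\mathscr{L}_s\cdot\{0,\infty\} = \frac12(U_p+1)\sum_a\log(a)\{\tfrac{a}{p},\infty\}$ in $H_+$, localise at the Eisenstein maximal ideal, and use the rank-one freeness of $H_+$ there (Proposition \ref{Prop_criterion_Eisenstein}) to convert $\mathscr{L}_s\in I^3$ into a membership condition on that element of homology. Up to that point your plan is essentially de Shalit's reformulation plus Mazur's criterion, exactly as in the paper.

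The genuine gap comes exactly where you flag ``the hard part'': you propose to \emph{derive} the identification between the $I$-adic filtration on the Eisenstein homology and the augmentation filtration on $\mathcal{K}_s$ from the generalized cuspidal $1$-motive, Siegel units and a Sharifi-type reciprocity. The paper does not carry out this derivation; it cites it as a finished result, namely the explicit isomorphism $I\cdot(H^0)_+/I^2\cdot(H^0)_+ \xrightarrow{\sim} J\cdot\mathcal{K}_s/J^2\cdot\mathcal{K}_s$ of Theorem \ref{thm_Eisenstein_ideal} (\ref{thm_Eisenstein_ideal_iii}), which is \cite[Theorem 1.7]{Lecouturier_Sharifi}. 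Without invoking that result, what you have written is a plausible programme for reproving it, not a proof; it is by far the heaviest input and cannot be waved through in a paragraph. Once that isomorphism is granted, the remaining content of the paper's proof is the small but necessary check that the correction term $-\tfrac12([\sigma_a]-1)(1-\zeta_p^a,1-\zeta_p)$ appearing in the formula of \cite{Lecouturier_Sharifi} can be dropped, which comes down to $\sum_a\log(a)^3\equiv 0\ (\mathrm{mod}\ \ell^s)$ and uses $\ell\geq5$. Your write-up omits this step entirely.

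There is also a degree bookkeeping error. Since $I\cdot H_+=(H^0)_+$, the isomorphism of Theorem \ref{thm_Eisenstein_ideal} (\ref{thm_Eisenstein_ideal_iii}) matches $I^2H_+/I^3H_+$ with $J\mathcal{K}_s/J^2\mathcal{K}_s$, so the class of $\mathscr{L}_R$ in $I^2/I^3$ corresponds to the class of \eqref{log_steinberg_eq} in $J\mathcal{K}_s/J^2\mathcal{K}_s$ (this is exactly Remark \ref{rem_alpha_geq_2}), and $\alpha(p,\ell,s)\geq3$ is equivalent to the vanishing of that class. You place \eqref{log_steinberg_eq} one step too deep, in $J^2\mathcal{K}_s/J^3\mathcal{K}_s$, and you assert that its class in $J\mathcal{K}_s/J^2\mathcal{K}_s$ vanishes because $\alpha\geq2$; in fact the membership $\eqref{log_steinberg_eq}\in J\mathcal{K}_s$ is always true (it follows from $\sum_a\log(a)^2\equiv0$ via the tame symbol of Remark \ref{rem_alpha_geq_2} (\ref{remark_tame_symbol})) and is not what encodes $\alpha\geq2$, while its further membership in $J^2\mathcal{K}_s$ is precisely the criterion for $\alpha\geq3$. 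The shift you try to attribute to $\mathscr{L}_R$ being a ``logarithmic derivative'' is already absorbed in the relation $I^kH_+=I^{k-1}(H^0)_+$; no extra shift should appear.
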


\begin{rems}\label{rem_alpha_geq_2}
\begin{enumerate}
\item \label{remark_tame_symbol} The classical tame symbol in $K$-theory yields a group isomorphism $\mathcal{K}_s/J\cdot \mathcal{K}_s \xrightarrow{\sim} \mathbf{Z}/\ell^s\mathbf{Z}$ sending $(x,y)$ to $\log(\overline{x^{v(y)}/y^{v(x)}})$, where $v(x)$ is the $(1-\zeta_p)$-adic valuation of $x$ and the bar means the reduction modulo $(1-\zeta_p)$.
\item The proof is a consequence of Theorem \ref{main_thm} and our work (with Jun Wang) on a conjecture of Sharifi \cite{Lecouturier_Sharifi}.
\item One can show that $\sum_{a \in (\mathbf{Z}/p\mathbf{Z})^{\times}} \log(a) \cdot (1-\zeta_p^a, 1-\zeta_p)$ always belong to $J\cdot \mathcal{K}_s$. Furthermore, the group $J\cdot \mathcal{K}_s/J^2\cdot \mathcal{K}_s$ is isomorphic to $I^2\cdot (\mathbb{T} \otimes_{\mathbf{Z}} \mathbf{Z}/\ell^s\mathbf{Z})/I^3\cdot (\mathbb{T} \otimes_{\mathbf{Z}} \mathbf{Z}/\ell^s\mathbf{Z})$. The class of (\ref{log_steinberg_eq}) in $J\cdot \mathcal{K}_s/J^2\cdot \mathcal{K}_s$ corresponds to the class of $\mathscr{L}_R$ in $I^2\cdot (\mathbb{T} \otimes_{\mathbf{Z}} \mathbf{Z}/\ell^s\mathbf{Z})/I^3\cdot (\mathbb{T} \otimes_{\mathbf{Z}} \mathbf{Z}/\ell^s\mathbf{Z})$.
\item The condition $\alpha(p, \ell, s) \geq 3$ is not always satisfied, \eg if $(p,\ell,s) = (181,5,1)$. It is however satisfied if $I^2\cdot (\mathbb{T} \otimes_{\mathbf{Z}} \mathbf{Z}/\ell^s\mathbf{Z})=I^3\cdot (\mathbb{T} \otimes_{\mathbf{Z}} \mathbf{Z}/\ell^s\mathbf{Z})$, which is the case if and only if $\sum_{k=1}^{\frac{p-1}{2}} k \cdot \log(k) \not\equiv 0 \text{ (modulo }\ell \text{)}$ as a consequence of a result of Merel \cite[Th\'eor\`eme 1]{Merel_accouplement}.
\end{enumerate}
\end{rems}

\subsection*{Acknowledgements}
I would like to thank my Phd advisor Lo\"ic Merel for suggesting me to work on this problem and for his support during the completion of this paper. Part of this work began at the end of my Phd thesis and the details were worked out afterwards. This work was funded by Universit\'e Paris--Diderot, the Yau Mathematical Sciences Center, Tsinghua University and the The Fondation Sciences Math\'ematiques de Paris. 

\section{Proof of the main theorem}
In this section, we prove Theorem \ref{main_thm}. 
\subsection{Notation and conventions}
Keep the notation of section \ref{section_introduction}. 
We introduce some more notation. Fix an algebraic closure $\overline{\mathbf{Q}}$ of $\mathbf{Q}$ together with embeddings $\overline{\mathbf{Q}} \hookrightarrow \overline{\mathbf{Q}}_p$ and $\overline{\mathbf{Q}} \hookrightarrow \mathbf{C}$.

Let ${\bf \Lambda }=R[\mathbf{F}_p^{\times}]$, ${\bf J}$ be the augmentation ideal of $\bf \Lambda$, $\Lambda = R[\mu_r(\mathbf{F}_p^{\times})]$, $J$ be the augmentation ideal of $\Lambda$, $\Lambda' = R[\mu_{(p-1)/r}(\mathbf{F}_p^{\times})]$ and $J'$ be the augmentation ideal of $\Lambda'$. Here, for any integer $n\geq 1$ we have denoted by $\mu_n(\mathbf{F}_p^{\times})$ the elements of order dividing $n$ in $\mathbf{F}_p^{\times}$. Note that $\Lambda$ is a local ring and a direct factor of $\bf \Lambda$. The $R$-algebra $\Lambda'$ is \'etale.  We warn the reader that de Shalit uses the notation $\bf I$, $I$ and $I'$ for $\bf J$, $J$ and $J'$ respectively (we want to avoid confusion with the Eisenstein ideal). If $M$ is a $\Lambda$-module, we denote by $M[J]$ the elements of $M$ annihilated by $J$ (a similar notation applies to ${\bf J}$ and $J'$). We let $\langle . \rangle : \Gal(\overline{\mathbf{Q}}_p /\mathbf{Q}_p) \rightarrow \Lambda^{\times}$ be the character sending $g$ to $[\chi_p(g)^{(p-1)/r}]$ where $\chi_p : \Gal(\overline{\mathbf{Q}}_p /\mathbf{Q}_p) \rightarrow \mathbf{F}_p^{\times}$ is the cyclotomic character modulo $p$.

Concerning Hecke operators (of level $\Gamma_0$ or $\Gamma_1$), we will consider the dual ones (induced by Picard functoriality), \ie those considered in \cite[\S 1.1]{deShalit_X_1}. We warn the reader that de Shalit uses the \textit{standards} Hecke operators in \cite{deShalit_MT} (\ie those induced by Albanese functoriality). We will only use the Hecke operator $U_p$ in what follows (it is usually denoted by $U_p^*$ in the litterature, \eg in \cite[\S 5.5]{diamond_shurman}). We shall use bold letters for Hecke operators of level $\Gamma_1$, to distinguish them from those of level $\Gamma_0$ (\eg ${\bf U}_p$ versus $U_p$). We denote by $\mathbb{T}_1$ the Hecke algebra of weight $2$ and level $\Gamma_1$ over $\mathbf{Z}$. There is a ring morphism ${\bf \Lambda} \rightarrow \mathbb{T}_1 \otimes_{\mathbf{Z}} R$ sending $[a]$ to $\langle a \rangle$ (the $a$th dual diamond operator, corresponding to a matrix in $\Gamma_0$ whose \emph{upper-left} corner is congruent to $a$ modulo $p$).

We denote by $\mathcal{C}_0$ and $\mathcal{C}_1$ the cusps of $X_0$ and $X_1$ respectively. We have $\mathcal{C}_1 = \mathcal{C}_1^{\et} \sqcup \mathcal{C}_1^{\mu}$ where $\mathcal{C}_1^{\et}$ (resp. $\mathcal{C}_1^{\mu}$) is the set of cusps of $X_1$ above the cusp $0$ (resp. $\infty$) of $X_0$.

We choose the standard canonical model for the modular curve $X_1$ over $\mathbf{Q}$, \ie the moduli space of pairs $(E, \mathbf{Z}/p\mathbf{Z} \hookrightarrow E[p])$. In this model, the cusps of $X_1$, the cusps in $\mathcal{C}_1^{\mu}$ (resp. $\mathcal{C}_1^{\et}$) are defined over $\mathbf{Q}(\zeta_p)^+$ (resp. $\mathbf{Q}$).

\subsection{Construction of a filtered deformation}

Let $\overline{q} : N \rightarrow J_0^{\sharp}(\overline{\mathbf{Q}}_p)$ be the composite of $q : N \rightarrow \Hom(N, \mathbf{Q}_{p^2}^{\times})$ with the uniformization $\Hom(N, \overline{\mathbf{Q}}_p^{\times}) \rightarrow J_0^{\sharp}(\overline{\mathbf{Q}}_p)$.
We let 
$$W_0 := (J_0^{\sharp}(\overline{\mathbf{Q}}_p)/\overline{q}(N))[r] \text{ .}$$
By Proposition \ref{Prop_properties_Q}, we have a $\mathbb{T}$ and $\Gal(\overline{\mathbf{Q}}_p /\mathbf{Q}_p)$-equivariant short exact sequence 
\begin{equation}\label{fund_exact_seq_W_0}
0\rightarrow W_0^0 \rightarrow W_0 \rightarrow W_0^1 \rightarrow 0 \text{ .}
\end{equation}
where $W_0^0= \Hom(N, \mu_r)$ and  $W_0^1=N \otimes_{\mathbf{Z}} R$.

Our key input is the following result, whose proof relies on the ideas developed by the author in \cite{Lecouturier_MMS} and the techniques of de Shalit.

\begin{thm}\label{W_1_thm}
There exists a $R$-module $W_1$ with a commuting action of $\mathbb{T}_1$ and $\Gal(\overline{\mathbf{Q}}_p /\mathbf{Q}_p)$ satisfying the following properties.
\begin{enumerate}
\item \label{W_1_prop_free_lambda} $W_1$ is a free $\Lambda$-module of rank $2m+2$ where $m$ is the genus of $X_0$. 
\item \label{W_1_prop_bracket_diamond} There is a group isomorphism $\tau : W_0 \xrightarrow{\sim} W_1[J]$ which is $\mathbb{T}_1$ and $\Gal(\overline{\mathbf{Q}}_p /\mathbf{Q}_p)$-equivariant.
\item  \label{W_1_prop_filtration} There is a short exact sequence of $\mathbb{T}_1$ and $\Gal(\overline{\mathbf{Q}}_p /\mathbf{Q}_p)$ modules
\begin{equation}\label{filtration_W_1}
0 \rightarrow W_1^0 \rightarrow W_1 \rightarrow W_1^1 \rightarrow 0 \text{ ,}
\end{equation}
where $W_1^0$ and $W_1^1$ are free $\Lambda$-modules of rank $m+1$.
Furthermore, the analogous exact sequence 
\begin{equation}\label{filtration_W_1_J}
0 \rightarrow W_1^0[J] \rightarrow W_1[J] \rightarrow W_1^1[J] \rightarrow 0 \text{ .}
\end{equation}
remains exact and is identified with (\ref{fund_exact_seq_W_0}) under $\tau$.
\item\label{W_1_prop_galois_filtration}  Let $\phi : \Gal(\overline{\mathbf{Q}}_p /\mathbf{Q}_p) \rightarrow \Aut(W_1)$ be the unramified representation sending $\Frob_p$ to ${\bf U}_p$ (note that ${\bf U}_p$ acts invertibly on $W_1$ since it does on $W_1[J]$ by (\ref{W_1_prop_bracket_diamond})). The action of $\Gal(\overline{\mathbf{Q}}_p /\mathbf{Q}_p)$ on $W_1^0$ is given by $\phi^{-1}$. The action of $\Gal(\overline{\mathbf{Q}}_p /\mathbf{Q}_p)$ on $W_1^1$ is given by $\phi \langle \cdot \rangle^{-1}$. 
\end{enumerate}
\end{thm}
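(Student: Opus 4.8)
The plan is to realize $W_1$ as the $r$-torsion of an appropriate quotient of $J_1^\sharp(\overline{\mathbf{Q}}_p)$, mirroring the construction of $W_0$ from $J_0^\sharp$, but now using the richer $1$-motive $\mathbf{Z}[\Cusp]_0 \to J_1^\sharp$ furnished by \cite[Theorem 1.5]{Lecouturier_MMS}. Concretely, $X_1 \to X_0$ is the Shimura covering with Galois group $\mu_r(\mathbf{F}_p^\times)$, so $J_1^\sharp$ (the generalized Jacobian of $X_1$ with respect to the reduced cuspidal modulus) is a module over $\Lambda = R[\mu_r(\mathbf{F}_p^\times)]$ via the diamond operators, and $J_0^\sharp$ corresponds to the $J$-coinvariants (equivalently, via the trace/norm, to the $J$-torsion). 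The first step is therefore to write down the analogue of the toric uniformization: following de Shalit's analysis of $X_1(p)$ over $\mathbf{Z}_{p^2}$ and the supersingular module picture, one obtains a rigid uniformization $J_1^\sharp(\overline{\mathbf{Q}}_p) \simeq \Hom(N_1, \overline{\mathbf{Q}}_p^\times)/q_1(N_{1,0})$ where $N_1$ is the relevant supersingular module at level $\Gamma_1(p)$, a free $\Lambda$-module whose rank accounts for the genus of $X_1$ minus a correction; the key is that $N_1 \otimes_\Lambda R \simeq N$ and the pairing $Q_1$ reduces to $Q$. I would then set $W_1 := (J_1^\sharp(\overline{\mathbf{Q}}_p)/\overline{q_1}(N_1))[r]$, which is a $\mathbb{T}_1$- and $\Gal(\overline{\mathbf{Q}}_p/\mathbf{Q}_p)$-module by functoriality.

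Next I would establish properties (\ref{W_1_prop_free_lambda}) and (\ref{W_1_prop_filtration}). Freeness over $\Lambda$ of rank $2m+2$ should follow from the fact that $J_1^\sharp[r]$ is, after the uniformization, an extension of $N_1 \otimes R$ by $\Hom(N_1,\mu_r)$, both of which are free $\Lambda$-modules once one knows $N_1$ is $\Lambda$-free of the right rank; here the rank count comes from $\dim \Hom(N_1,\mu_r) = \dim(N_1 \otimes R)$ being the number of supersingular points at level $p$ counted with the covering, which works out to $m+1$ over $\Lambda$ (the $+1$ coming from the cuspidal/generalized part, exactly as $N$ has rank $m+1$ over $R$ whereas $N_0$ has rank $m$). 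The filtration (\ref{filtration_W_1}) is then the $r$-torsion version of $0 \to \Hom(N_1,\mu_r) \to J_1^\sharp[r] \to N_1 \otimes R \to 0$, with $W_1^0 = \Hom(N_1,\mu_r)$ and $W_1^1 = N_1 \otimes_{\mathbf{Z}} R$; exactness after applying $[J]$ is the nontrivial point, and it should be deduced from $\Lambda$-freeness together with the vanishing of $\mathrm{Tor}_1^\Lambda(-,R)$ and surjectivity of the connecting map, i.e. from the flatness built into the construction. Property (\ref{W_1_prop_bracket_diamond}) — the isomorphism $\tau : W_0 \xrightarrow{\sim} W_1[J]$ — then amounts to identifying $J_1^\sharp[J]$-part with $J_0^\sharp$ via the covering map $X_1 \to X_0$; this is where \cite[Theorem 1.5]{Lecouturier_MMS} is essential, since it guarantees that the algebraic $1$-motive at level $\Gamma_1$ restricts (via $J$) to the one at level $\Gamma_0$, hence $\overline{q_1}(N_1)[J]$ matches $\overline{q}(N)$ and the quotients' torsion submodules agree compatibly with all the structures.

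For property (\ref{W_1_prop_galois_filtration}), the Galois action on $W_1^0 = \Hom(N_1,\mu_r)$: since $\mathrm{Frob}_p$ acts on the toric part through the uniformization by (essentially) the operator $\mathbf{U}_p$ — this is the standard fact that on a Mumford-curve uniformization the Galois action on the character group side is trivial while on $\mu_r$ it is cyclotomic, and the Hecke operator $\mathbf{U}_p$ interpolates the Frobenius on the special fiber — one reads off that $\Gal$ acts on $W_1^0$ by $\phi^{-1}$ (the inverse because $\Hom(N_1,-)$ is contravariant, flipping $\mathbf{U}_p \mapsto \mathbf{U}_p^{-1}$ while the $\mu_r$ contributes the cyclotomic twist which, on this étale/multiplicative piece, is absorbed). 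Dually, on $W_1^1 = N_1 \otimes R$ the Galois action is the Frobenius action on the supersingular points, which de Shalit computes to be $\mathbf{U}_p$ composed with a diamond-operator twist; modulo $J$ the twist is trivial (recovering the $\Gamma_0$ statement that $W_0^1 = N\otimes R$ is unramified $\sim \mathbf{U}_p$), but the full $\Lambda$-level statement carries the extra factor $\langle\cdot\rangle^{-1}$ coming from the action of $\Gal$ on the $\mu_p$-level structure defining $X_1$ in the chosen moduli interpretation. I expect the main obstacle to be precisely this last point: pinning down the exact diamond-operator twist in the Galois action on the character group $N_1$ at level $\Gamma_1(p)$ and matching conventions (dual vs. standard Hecke operators, the normalization of $\langle\cdot\rangle$, and the model of $X_1$ over $\mathbf{Q}(\zeta_p)^+$ versus $\mathbf{Q}$), since a sign or inversion error there propagates into the formula for $\mathscr{L}_R$; the cleanest route is to verify it after reduction modulo $J$ against the known level-$\Gamma_0$ statement and then upgrade using the $\Lambda$-freeness and the explicit description of the covering $X_1 \to X_0$ as a $\mu_r$-torsor twisted by the cyclotomic character.
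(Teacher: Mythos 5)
Your construction fails at the very first step: you posit a rigid-analytic (Mumford-curve) uniformization $J_1^\sharp(\overline{\mathbf{Q}}_p) \simeq \Hom(N_1, \overline{\mathbf{Q}}_p^\times)/q_1(N_{1,0})$ by a torus indexed by a ``supersingular module $N_1$ at level $\Gamma_1(p)$''. No such uniformization exists, because $J_1(p)$ does not have multiplicative reduction. Over $\mathbf{Q}_p(\zeta_p)$ the semistable model of $X_1(p)$ has as special fiber two copies of the \emph{Igusa curve} (whose genus is comparable to that of $X_0(p)$) meeting at the supersingular points; consequently, the connected component of the special fiber of the N\'eron model of $J_1^\sharp$ is an extension of the (positive-dimensional) product $J^{\et,\sharp}\times J^{\mu,\sharp}$ by the torus $\Hom(\mathbf{Z}[S],\mathbf{G}_m)$, as in the paper's equation (\ref{eq_def_sub}). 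The toric part is still indexed by $\mathbf{Z}[S]$, the \emph{same} supersingular set as at level $\Gamma_0$ (the covering $X_1\to X_0$ is totally ramified there); there is no larger free $\Lambda$-module of supersingular points, and there is a genuinely abelian part that your construction ignores. This is precisely why de Shalit works with the N\'eron-model filtration $J_1^\sharp[r]^{\sub}\subset J_1^\sharp[r]$ rather than a toric uniformization, and why the Galois-cohomological argument at level $\Gamma_1$ is substantially harder than at level $\Gamma_0$.

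A second, related gap: the $1$-motive from \cite[Theorem 1.5]{Lecouturier_MMS} has source $\mathbf{Z}[\mathcal{C}_1]_0$, the group of degree-zero cuspidal divisors, not a supersingular module. The coincidence $N/N_0\simeq\mathbf{Z}\simeq\mathbf{Z}[\mathcal{C}_0]_0$ which, at level $\Gamma_0$, lets one reinterpret the boundary map of the toric uniformization algebraically (this is Lemma~\ref{Lemma_1_motive}(\ref{Lemma_1_phi_0})) has no analogue at level $\Gamma_1$. The paper's actual construction takes $\mathbf{W}_1 := (J_1^\sharp(\overline{\mathbf{Q}}_p)/\varphi_1(\mathbf{Z}[\mathcal{C}_1]_0))[r]$ with $\varphi_1$ defined algebraically via uniformizers $j^{-1}$ and $(j\circ w_p)^{-1}$ at the cusps, defines $\mathbf{W}_1^0$ from $\mathscr{J}_1^{\sharp,\sub}$ (not from a torus), and then passes to $J'$-invariants to land on the local factor $\Lambda$. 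Freeness of $W_1$ over $\Lambda$ is not automatic: de Shalit's computation gives $J_1^\sharp[r]^{\sub}\simeq R\oplus\Lambda^m$, which is \emph{not} $\Lambda$-free; it is only after quotienting by the cuspidal $1$-motive that the defective summand $R$ is repaired into a copy of $\Lambda$, making $W_1^0$ free of rank $m+1$ and $W_1$ free of rank $2m+2$. Your rank count by $\dim\Hom(N_1,\mu_r)=\dim(N_1\otimes R)$ presupposes exactly the structure that fails, and likewise your derivation of the $\langle\cdot\rangle^{-1}$ twist in property (\ref{W_1_prop_galois_filtration}) from ``the $\mu_p$-level structure'' is a heuristic with nothing to anchor it once the toric picture is removed; in the paper this twist is established by the N\'eron-model computation in \cite[\S 3.7]{deShalit_X_1}.
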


\begin{rems}\label{rems_main_thm}
\begin{enumerate}
\item\label{rem_1_main_thm} Theorem \ref{W_1_thm} is analogous to \cite[Theorem 4.3]{deShalit_MT}, where de Shalit deforms the filtration (\ref{filtration_J_0_sharp}) (notice that our character $\langle \cdot \rangle$ is inverse to the one of de Shalit in this theorem). The main difference here is that we have replaced (\ref{filtration_J_0_sharp}) by (\ref{fund_exact_seq_W_0}). Another important difference is that the $\Lambda$-module $W_1$ is free, while the analogous statement is false in de Shalit's situation. 
\item Theorem \ref{W_1_thm} (\ref{W_1_prop_galois_filtration}) is similar to \cite[Proposition 4.6]{deShalit_MT}.
\item There is in fact an action of $\Gal(\overline{\mathbf{Q}} /\mathbf{Q})$ on $W_0$ and $W_1$, but the exact sequences (\ref{fund_exact_seq_W_0}) and (\ref{filtration_W_1}) are not $\Gal(\overline{\mathbf{Q}} /\mathbf{Q})$-stable \textit{a priori}.
\end{enumerate}
\end{rems}
\begin{proof}
Following de Shalit, we begin by recalling some facts about the geometry of $X_1$. Let $K = \mathbf{Q}_p(\zeta_p)$ and $\mathcal{O}_K$ be its ring of integer, where $\zeta_p \in \overline{\mathbf{Q}}_p$ is a primitive $p$th root of unity. Let $\mathscr{X}_1$ be the model of $X_1$ over $\mathcal{O}_K$ considered by de Shalit in \cite[\S 2.4]{deShalit_X_1}. The special fiber over $\mathbf{F}_p$ of $\mathscr{X}_1$ is the union of two irreducible components $\Sigma^{\et}$ and $\Sigma^{\mu}$, both isomorphic to the Igusa curve $\Ig(p)$, intersecting at the supersingular points $S$. The cusps in $\mathcal{C}_1^{\et}$ (resp. $\mathcal{C}_1^{\mu}$) define $\mathcal{O}_K$ points of $\mathscr{X}_1$ whose special fibers lie in $\Sigma^{\et}$ (resp. $\Sigma^{\mu}$). 

We denote by $\mathscr{J}_1^{\sharp}$  the N\'eron model of $J_1^{\sharp} \times_{\mathbf{Q}} K$ over $\mathcal{O}_K$ and by $(\mathscr{J}_{1/\mathbf{F}_p}^{\sharp})^0$ the connected component of the special fiber $\mathscr{J}_1^{\sharp}$. We have an exact sequence of abelian group schemes over $\mathbf{F}_p$:
\begin{equation}\label{eq_def_sub}
0 \rightarrow \Hom(\mathbf{Z}[S], \mathbf{G}_{m/\mathbf{F}_p})\rightarrow (\mathscr{J}_{1/\mathbf{F}_p}^{\sharp})^0 \rightarrow J^{\et, \sharp} \times J^{\mu, \sharp}  \rightarrow 0 
\end{equation}
where $J^{\et, \sharp}$ is the generalized Jacobian of $\Sigma^{\et}$ with respect to the reduced cuspidal modulus and similarly for $J^{\mu, \sharp}$. We denote by $(\mathscr{J}_1^{\sharp})^0$ the preimage of $(\mathscr{J}_{1/\mathbf{F}_p}^{\sharp})^0(\overline{\mathbf{F}}_p)$ by the reduction map $\mathscr{J}_1^{\sharp}(\mathcal{O}_K^{\unr}) \rightarrow \mathscr{J}_1^{\sharp}(\overline{\mathbf{F}}_p)$ (where $\mathcal{O}_K^{\unr}$ is the valuation ring of the maximal unramified extension $K^{\unr}$ of $K$). Note that $(\mathscr{J}_1^{\sharp})^0$ is a subgroup of $J_1^{\sharp}(K^{\unr})$ by the N\'eron mapping property. We denote by $\mathscr{J}_1^{\sharp, \sub}$ the kernel of the projection $(\mathscr{J}_1^{\sharp})^0  \rightarrow J^{\mu, \sharp}(\overline{\mathbf{F}}_p)$. In the notation of de Shalit \cite[\S 2.7 Definition]{deShalit_X_1}, we have $\mathscr{J}_1^{\sharp, \sub}[r] = J_1^{\sharp}[r]^{\sub}$ (this follows from the facts that the kernel of the reduction map is a pro-$p$ group and $\gcd(\ell,p)=1$). 

We denote by $\mathscr{J}_1^{', \sharp}$ the N\'eron model of $J_1^{\sharp}$ over $\mathbf{Z}_p$. If $L \subset \overline{\mathbf{Q}}_p$ is an extension of $\mathbf{Q}_p$ (possibly ramified or infinite) with residue field $\kappa_L$ and valuation ring $\mathcal{O}_L$, we denote by $(\mathscr{J}_1^{', \sharp})^0(\mathcal{O}_L)$ the preimage of $(\mathscr{J}_{1/ \mathbf{F}_p}^{', \sharp})^{0}(\kappa_L)$ under the reduction map  $\mathscr{J}_1^{', \sharp}(\mathcal{O}_L) \rightarrow \mathscr{J}_{1/\mathbf{F}_p}^{', \sharp}(\kappa_L)$, where $(\mathscr{J}_{1/ \mathbf{F}_p}^{', \sharp})^{0}$ is the connected component of $\mathscr{J}_{1/\mathbf{F}_p}^{', \sharp}$. By the N\'eron mapping property, there is a canonical map of $\mathcal{O}_K$-schemes $\mathscr{J}_1^{', \sharp} \times_{\mathbf{Z}_p} \mathcal{O}_K \rightarrow \mathscr{J}_1^{\sharp}$.  We denote by $\mathcal{G}_1$ the image of $(\mathscr{J}_1^{', \sharp})^0(\mathcal{O}_K^{\unr})$ in $(\mathscr{J}_1^{\sharp})^0$ via the latter map. The argument of \cite[Proof of Proposition 2.5]{deShalit_X_1} shows that $\mathcal{G}_1$ is contained in $\mathscr{J}_1^{\sharp, \sub}$ and that $\mathcal{G}_1[r] = \mathscr{J}_1^{\sharp, \sub}[r]$.

\begin{lem}\label{Lemma_1_motive}
For $i \in \{0,1\}$, let $\mathcal{C}_i$ be the set of cusps of the modular curve $X_i$ and $\mathbf{Z}[\mathcal{C}_i]_0$ be the augmentation subgroup of $\mathbf{Z}[\mathcal{C}_i]$. There exists a group homomorphism
$$\varphi_i : \mathbf{Z}[\mathcal{C}_i]_0 \rightarrow J_i^{\sharp}(\overline{\mathbf{Q}})$$ satisfying the following properties.
\begin{enumerate}
\item\label{Lemma_1_injectivity} The map $\varphi_i$ is injective. Furthermore $\varphi_i$ is $\mathbb{T}_1$ and $\Gal(\overline{\mathbf{Q}}/\mathbf{Q})$-equivariant.

\item \label{Lemma_1_diagram}  We have a commutative diagram

\begin{tikzcd}\mathbf{Z}[\mathcal{C}_0]_0 \arrow[r, "\varphi_0"]\arrow[d]& J_0^{\sharp}(\overline{\mathbf{Q}}) \arrow[d] \\ \mathbf{Z}[\mathcal{C}_1]_0 \arrow[r, "\varphi_1"]&J_1^{\sharp}(\overline{\mathbf{Q}})\end{tikzcd}
\\
where the vertical maps are the natural traces maps.
\item \label{Lemma_1_phi_0} The following two maps $\mathbf{Z} \rightarrow J_0^{\sharp}(\overline{\mathbf{Q}}_p)$ coincide:
\begin{itemize}
\item The map $\mathbf{Z} \rightarrow J_0^{\sharp}(\overline{\mathbf{Q}}_p) $ obtained from $\varphi_0$ after identifying $\mathbf{Z}[\mathcal{C}_0]_0$ with $\mathbf{Z}$ (via the choice of $(\infty) - (0)$ as a generator) and embedding  $J_0^{\sharp}(\overline{\mathbf{Q}})$ into $J_0^{\sharp}(\overline{\mathbf{Q}}_p)$.
\item The map $\mathbf{Z} \rightarrow J_0^{\sharp}(\overline{\mathbf{Q}}_p)$ obtained from $\overline{q} : \mathbf{Z} = N/N_0 \rightarrow J_0^{\sharp}(\overline{\mathbf{Q}}_p) $.
\end{itemize}
\item \label{Lemma_1_phi_1}  The group $\varphi_1(\mathbf{Z}[\mathcal{C}_1^{\et}]_0)$ is contained in $r\cdot \mathcal{G}_1$ (and thus in $\mathscr{J}_1^{\sharp, \sub}$).
\item \label{Lemma_1_phi_1_complex_conjug} The $\mathbf{J}$-coinvariants of $(J_1^{\sharp}(\overline{\mathbf{Q}})/\varphi_1(\mathbf{Z}[\mathcal{C}_1]_0))[r]$
is a free $R$-module of rank $2m+2$ (where $m = \genus(X_0)$).
\end{enumerate}
\end{lem}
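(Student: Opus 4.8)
The plan is to take $\varphi_i$ to be (essentially) the generalized cuspidal $1$-motive constructed in \cite[Theorem 1.5]{Lecouturier_MMS} and then verify the five assertions in turn, the last two carrying the genuinely new content. Concretely, for $D\in\mathbf{Z}[\mathcal{C}_i]_0$ the Manin--Drinfeld theorem produces an integer $n\geq 1$ and a modular unit $u_D$ on $X_i$ with $\mathrm{div}(u_D)=nD$; packaging the pair $(nD,u_D)$ — the divisor together with the trivialisation and its leading terms at the cusps — yields a point of $J_i^{\sharp}(\overline{\mathbf{Q}})$ defined up to $n$-torsion, and \cite{Lecouturier_MMS} singles out the canonical choice compatible with the Drinfeld--Manin / Eisenstein structure; this is $\varphi_i$. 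Equivariance under $\mathbb{T}_1$ and $\Gal(\overline{\mathbf{Q}}/\mathbf{Q})$ in (\ref{Lemma_1_injectivity}) is built into the construction, and injectivity follows from the elementary observation that a modular unit with value $1$ at every cusp of $X_i$ must be constant: if $\varphi_i(D)=0$ then $D=\mathrm{div}(g)$ with $g$ of this type, hence $g$ has neither zeros nor poles, so $D=0$.

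For (\ref{Lemma_1_diagram}): the covering $\pi\colon X_1\to X_0$ has degree $(p-1)/2$ and is unramified over the cusps of $X_0$ (the fibres $\mathcal{C}_1^{\et}$ above $0$ and $\mathcal{C}_1^{\mu}$ above $\infty$ both have $(p-1)/2$ points), so pull-back of divisors $\mathbf{Z}[\mathcal{C}_0]_0\to\mathbf{Z}[\mathcal{C}_1]_0$ coincides with the trace map in the statement; since $\mathrm{div}(\pi^{*}u)=\pi^{*}\mathrm{div}(u)$ for a modular unit $u$, and the induced map $J_0^{\sharp}\to J_1^{\sharp}$ is the Picard pull-back — the correct variance for the dual Hecke operators we have fixed — the square commutes. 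Assertion (\ref{Lemma_1_phi_0}) is then precisely the algebraic-versus-$p$-adic comparison of \cite[Theorem 1.5]{Lecouturier_MMS} at level $\Gamma_0$: there $\mathbf{Z}[\mathcal{C}_0]_0$ is free of rank one on $(\infty)-(0)$, and $\overline{q}\colon \mathbf{Z}=N/N_0\to J_0^{\sharp}(\overline{\mathbf{Q}}_p)$ is identified with the algebraic $1$-motive, i.e. with $\varphi_0$.

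For (\ref{Lemma_1_phi_1}): the cusps in $\mathcal{C}_1^{\et}$ are $\mathbf{Q}$-rational, so by the equivariance in (\ref{Lemma_1_injectivity}) one has $\varphi_1(D)\in J_1^{\sharp}(\mathbf{Q})\subset J_1^{\sharp}(\mathbf{Q}_p)$ for $D\in\mathbf{Z}[\mathcal{C}_1^{\et}]_0$, hence a point of the N\'eron model $\mathscr{J}_1^{',\sharp}$ over $\mathbf{Z}_p$. To see it lies in $r\cdot\mathcal{G}_1$, reduce the modular unit $u_D$ (with $\mathrm{div}(u_D)=nD$) modulo $p$: it becomes a rational function on $\Sigma^{\et}\cong\Ig(p)$ whose divisor is supported on the cusps lying in $\Sigma^{\et}$, which, combined with de Shalit's description in \cite[\S 2]{deShalit_X_1} of the component structure and monodromy of $\mathscr{J}_1^{\sharp}$ over $\mathcal{O}_K$, forces $\varphi_1(D)$ into the identity component. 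The extra factor $r$ arises because $K=\mathbf{Q}_p(\zeta_p)/\mathbf{Q}_p$ is totally ramified of degree $p-1$ with $r\mid p-1$, so that a point of $(\mathscr{J}_1^{',\sharp})^0(\mathcal{O}_K^{\unr})$ differs from the image of the connected N\'eron model over $\mathcal{O}_K$ only after multiplication by $r$ — exactly the mechanism in \cite[Proof of Proposition 2.5]{deShalit_X_1}, applied here to the integral form of $\varphi_1$ coming from \cite{Lecouturier_MMS}.

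Finally (\ref{Lemma_1_phi_1_complex_conjug}) is a $\mathbf{\Lambda}$-module computation, and the main obstacle. Write $M_1=(J_1^{\sharp}(\overline{\mathbf{Q}})/\varphi_1(\mathbf{Z}[\mathcal{C}_1]_0))[r]$, a $\mathbf{\Lambda}$-module via the dual diamond operators. Taking $\mathbf{J}$-coinvariants kills the diamonds and so relates $M_1$ to level $\Gamma_0$: using (\ref{Lemma_1_diagram}) and (\ref{Lemma_1_phi_0}) together with the two-Igusa-component analysis of $J_1^{\sharp}[r]$ in \cite[\S 2]{deShalit_X_1}, one identifies $M_1/\mathbf{J}\cdot M_1$ with $W_0=(J_0^{\sharp}(\overline{\mathbf{Q}}_p)/\overline{q}(N))[r]$. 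The latter sits in the exact sequence (\ref{fund_exact_seq_W_0}), an extension of $N\otimes_{\mathbf{Z}}R$ by $\Hom(N,\mu_r)$ with $N=\mathbf{Z}[S]$; since $J_0\times\mathbf{Q}_{p^2}$ has totally multiplicative reduction with character lattice the augmentation subgroup $N_0$ of rank $m=\genus(X_0)$, we get $\#S=m+1$, so both outer terms are free $R$-modules of rank $m+1$ and hence $W_0$ is free of rank $2m+2$. The crux is controlling the homology that appears because the $\mathbf{J}$-coinvariants functor (equivalently, taking coinvariants for the $\ell$-part of $\mathbf{F}_p^{\times}$) is not exact; this is exactly where quotienting by the full cuspidal $1$-motive $\varphi_1$ — rather than by de Shalit's smaller subgroup — is essential, since it removes the non-free (``Eisenstein'') part and is what ultimately yields the freeness of $W_1$ over $\Lambda$ in Theorem \ref{W_1_thm}. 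Assertion (\ref{Lemma_1_phi_1}), with its bookkeeping of the factor $r$ between the N\'eron models over $\mathbf{Z}_p$ and $\mathcal{O}_K$, is the second delicate point.
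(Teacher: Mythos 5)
Your overall plan (take $\varphi_i$ from the cuspidal $1$-motive of \cite{Lecouturier_MMS}, verify the five assertions, treat (iv) and (v) as the hard points) is the same as the paper's, but several of your arguments are either wrong or circular in a way that would make the proof fail.

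\emph{The construction itself.} You build $\varphi_i$ by Manin--Drinfeld: pick $n$ with $nD$ principal, take a modular unit $u_D$ with $\mathrm{div}(u_D)=nD$, and then appeal to an unspecified ``canonical choice'' to divide by $n$. This is not the construction, and the ambiguity you introduce (``defined up to $n$-torsion'') is exactly what the real construction avoids. In the paper, $J_i^{\sharp}(F)$ is realized as $\Div^0(X_i,\mathcal{C}_i)(F)/F(X_i)^{\times}$, and $\varphi_i(D)$ is given \emph{directly} by the element with trivial divisor on $Y_i$ and cuspidal data equal to explicit powers of the fixed uniformizers $j^{-1}$ (at $\mu$-cusps) and $(j\circ w_p)^{-1}$ (at $\et$-cusps). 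No choice of $n$, no quotient ambiguity, and Galois/Hecke equivariance becomes a finite computation with the $j$-function rather than something ``built into the construction.''

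\emph{Injectivity.} Your argument --- ``if $\varphi_i(D)=0$ then $D=\mathrm{div}(g)$ with $g$ a modular unit having value $1$ at every cusp, hence $g$ is constant, hence $D=0$'' --- is simply false as stated. If $D\neq 0$ and $\varphi_i(D)=0$, the resulting $g$ is a modular unit with $\mathrm{div}(g)=D$ supported on cusps; it does \emph{not} have value $1$ at the cusps in the support of $D$ (it has zeros or poles there). Non-constant modular units with cuspidal divisor certainly exist on $X_1(p)$ (Siegel units), so you cannot deduce $D=0$ from this. The genuine content of injectivity is the constraint on the \emph{leading terms} of $g$ relative to the chosen uniformizers; this is precisely what \cite[Theorem 1.4 (ii), Theorem 1.1]{Lecouturier_MMS} controls, and it is a nontrivial statement, not an elementary observation.

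\emph{Assertion (iv), the factor $r$.} Your explanation (``$K/\mathbf{Q}_p$ is totally ramified of degree $p-1$ with $r\mid p-1$, so the point differs from the image of the connected N\'eron model only after multiplication by $r$'') is not the mechanism. The actual argument: first show $\varphi_1'$ lands in $(\mathscr{J}_1^{',\sharp})^0(\mathbf{Z}_p)$ by passing to the ordinary Jacobian and using that the cusps in $\mathcal{C}_1^{\et}$ lie on a single irreducible component; then use that $(\mathscr{J}_1^{',\sharp})^0(\mathbf{Z}_p^{\unr})$ is $r$-divisible (because the kernel of reduction is pro-$p$ and $\gcd(r,p)=1$), so any $\mathbf{Z}_p$-point is $r$ times an $\mathcal{O}_K^{\unr}$-point of the connected component, hence lies in $r\cdot\mathcal{G}_1$. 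Ramification degree of $K/\mathbf{Q}_p$ is not what produces the $r$.

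\emph{Assertion (v) is circular in your proof.} You ``identify $M_1/\mathbf{J}\cdot M_1$ with $W_0$'' and then read off rank $2m+2$. But that identification is exactly Theorem \ref{W_1_thm} (\ref{W_1_prop_bracket_diamond}), whose proof \emph{uses} Lemma \ref{Lemma_1_motive} (\ref{Lemma_1_phi_1_complex_conjug}) (via Theorem \ref{W_1_thm} (\ref{W_1_prop_free_lambda}) and Nakayama) to show $W_1[J]$ has rank $2m+2$. So you are assuming the conclusion. The paper instead uses \cite[Theorem 1.4 (ii), Theorem 2.11 (vii)]{Lecouturier_MMS} to identify $(J_1^{\sharp}(\overline{\mathbf{Q}})/\varphi_1(\mathbf{Z}[\mathcal{C}_1]_0))[r]$ with $H_1(X_1,\mathcal{C}_1,R)_+\oplus H_1(Y_1,R)_-$ and then carries out a purely topological $\mathbf{\Lambda}$-module computation: the $\mathbf{J}$-coinvariants of the first summand are $H_1(X_0,\mathcal{C}_0,R)_+$ (by \cite[Proposition 1.3]{deShalit_X_1}), of rank $m+1$, and $H_1(Y_1,R)\cong\mathbf{\Lambda}^{2m}\oplus\mathbf{\Lambda}/{\bf J}$ so its $\mathbf{J}$-coinvariants have rank $2m+1$, whose minus part has rank $m+1$ by the intersection pairing. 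No prior identification with $W_0$ is invoked.

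Finally, note that $j\circ w_p$ and the cusps $\mathcal{C}_1^{\et}$ being defined over $\mathbf{Q}$ is what gives the Galois equivariance and the rationality used in (iv); your write-up gestures at this but the precise choice of uniformizers is doing the work throughout, and your Manin--Drinfeld framing obscures it.
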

\begin{proof}
We will use the results of \cite{Lecouturier_MMS}. For the comfort of the reader, we recall some of the notation of that paper. Let $X$ be a proper smooth curve over a field $k$, $F$ be any field extension of $k$, $F(X)$ be the function field of $X \times_k F$, $\mathcal{C}$ be a subset of $X(F)$, $Y = X \backslash \mathcal{C}$ and $J^{\sharp}$ be the generalized Jacobian (over $k$) of $X$ relative to the reduced modulus with support in $\mathcal{C}$. If $P$ is a closed point in $X\times_k F$, let $F(X)_P$ be the completion of $F(X)$ at $P$ and $U_P \subset F(X)_P^{\times}$ be the group of principal unit. Let $$\Div(X, \mathcal{C})(F):=\Div(Y)(F) \oplus \bigoplus_{c \in \mathcal{C}} F(X)_{c}^{\times}/U_c^{\times}\text{ ,}$$
where $\Div(Y)(F)$ is the group of divisors of $Y$ defined over $F$. We denote by $\Div^0(X, \mathcal{C})(F)$ the kernel of the degree map $\Div(X, \mathcal{C})(F) \rightarrow \mathbf{Z}$ given by $$D \oplus (f_c \text{ modulo }U_c)_{c \in \mathcal{C}_{\Gamma}} \mapsto \deg(D) + \sum_{c \in \mathcal{C}_{\Gamma}} \ord_c(f_c) \text{ .}$$

There is a canonical map $F(X)^{\times} \rightarrow \Div^0(X, \mathcal{C})(F)$, given by
$$f \mapsto \text{div}_{Y}(f) \oplus (f \text{ modulo }U_{c})_{c \in \mathcal{C}_{\Gamma}}$$
where $\text{div}_{Y}(f)$ is the divisor of the restriction of $f$ to $Y$.
Then there is a canonical $\Gal(F/k)$-equivariant group isomorphism
\begin{equation}\label{iso_gen_jac_alg}
 J^{\#}(F)  \xrightarrow{\sim} \Div^0(X, \mathcal{C})(F)/F(X)^{\times} 
\end{equation}
sending the class of a divisor $D$ supported on $Y$ to the image of $(D \oplus 0)$ in  $\Div^0(X, \mathcal{C})(F)/F(X)^{\times}$.

We can now give the definition of $\varphi_0$ and $\varphi_1$. We start with $\varphi_0$. Note that $j^{-1}$ (resp. $(j\circ w_p)^{-1}$, where $w_p$ is the Atkin--Lehner involution) is a uniformizer at the cusp $\infty$ (resp. $0$) of $X_0$. Then $\varphi_0$ sends $(\infty)-(0)$ to the class of $j^{-1} \oplus j\circ w_p$ in $\Div^0(X_0, \mathcal{C}_0)(\mathbf{Q})/\mathbf{Q}(X_0)^{\times}$ (the latter group being identified with $J_0^{\#}(\mathbf{Q})$ by (\ref{iso_gen_jac_alg})). Property (\ref{Lemma_1_injectivity}) for $i=0$ follows from \cite[Theorem 1.5 (ii)]{Lecouturier_MMS}. Property (\ref{Lemma_1_phi_0}) follows from \cite[Theorem 1.6]{Lecouturier_MMS}.

We now define $\varphi_1$. Note that $j^{-1}$ (resp. $(j\circ w_p)^{-1}$) is a uniformizer at any cusp in $\mathcal{C}_1^{\mu}$ (resp. $\mathcal{C}_1^{\et}$). We then define $\varphi_1(\sum_{c \in \mathcal{C}_1^{\mu}} n_{\mu, c}\cdot [c] + \sum_{c \in \mathcal{C}_1^{\et}} n_{\et, c}\cdot [c])$ to be the class of $(j^{-n_{\mu,c}})_{c \in \mathcal{C}_1^{\mu}} \oplus ((j\circ w_p)^{-n_{\et,c}})_{c \in \mathcal{C}_1^{\et}}$ in $\Div^0(X_1, \mathcal{C}_1)(\mathbf{Q}(\zeta_p))/\mathbf{Q}(\zeta_p)(X_1)^{\times}$ (the latter group being identified with $J_1^{\#}(\mathbf{Q}(\zeta_p))$ by (\ref{iso_gen_jac_alg})). 

We prove property (\ref{Lemma_1_injectivity}) for $i=1$. The injectivity of $\varphi_1$ follows from \cite[Theorem 1.4 (ii), Theorem 1.1, Remarks 1 (i)]{Lecouturier_MMS}. Let us prove that $\varphi_1$ is $\Gal(\overline{\mathbf{Q}}/\mathbf{Q})$-equivariant. The action of $g \in \Gal(\overline{\mathbf{Q}}/\mathbf{Q})$ on $\Div^0(X_1, \mathcal{C}_1)(\overline{\mathbf{Q}})$ is given by the usual action on $\Div(Y_1)(\overline{\mathbf{Q}})$ and the following action on $\bigoplus_{c \in \mathcal{C}_1} F(X)_{c}^{\times}/U_c^{\times}$:
$$g\cdot (f_c)_{c \in \mathcal{C}_1} = (g\cdot f_c)_{g(c), c \in \mathcal{C}_1} \text{ .}$$
Since $j$ and $j\circ w_p$ are defined over $\mathbf{Q}$ and $\Gal(\overline{\mathbf{Q}}/\mathbf{Q})$ stabilizes $\mathcal{C}_1^{\et}$ and $\mathcal{C}_1^{\mu}$, we see that $\varphi_1$ is indeed $\Gal(\overline{\mathbf{Q}}/\mathbf{Q})$-equivariant. 

It remains to prove that $\varphi_1$ is $\mathbb{T}_1$-equivariant (recall that we have chosen the dual Hecke operators). Let $D = D^{\mu}+D^{\et} \in \mathbf{Z}[\mathcal{C}_1]_0$, where $D^{\mu} = \sum_{c \in \mathcal{C}_1^{\mu}} m_c\cdot {c} \in \mathbf{Z}[\mathcal{C}_1^{\mu}]$ and $D^{\et} = \sum_{c \in \mathcal{C}_1^{\et}} e_c\cdot {c} \in \mathbf{Z}[\mathcal{C}_1^{\et}]$. We want to prove that $\varphi_1(T\cdot D) = T\cdot \varphi_1(D)$ where $T$ is the Hecke operator $\langle a \rangle$, $\mathbf{T}_{\ell}$ or $\mathbf{U}_p$ ($a \in (\mathbf{Z}/p\mathbf{Z})^{\times}$, $\ell \neq p$ prime). 

We easily check that $\langle a \rangle \cdot \varphi_1(D)$ is the class of $$(j^{-m_{\langle a \rangle^{-1} \cdot c}}\circ \langle a \rangle^{-1})_{c \in \mathcal{C}_1^{\mu}} \oplus (j^{-e_{\langle a \rangle^{-1} \cdot c}}\circ w_p\circ \langle a \rangle^{-1})_{c \in \mathcal{C}_1^{\et}}$$ in $J_1^{\sharp}$, which is the equal to the class of 
$$(j^{-m_{\langle a \rangle^{-1} \cdot c}})_{c \in \mathcal{C}_1^{\mu}} \oplus (j^{-e_{\langle a \rangle^{-1} \cdot c}}\circ w_p)_{c \in \mathcal{C}_1^{\et}} = (j^{-m_c})_{ \langle a \rangle \cdot  c, c \in \mathcal{C}_1^{\mu}} \oplus (j^{-e_{ c}}\circ w_p)_{\langle a \rangle \cdot c, c \in \mathcal{C}_1^{\et}} = \varphi_1(\langle a \rangle \cdot D) \text{ .}$$
This proves that $\langle a \rangle \cdot \varphi_1(D) =  \varphi_1(\langle a \rangle \cdot D)$.

We now consider the Hecke operator $\mathbf{T}_{\ell}$.  If $c \in \mathcal{C}_1^{\mu}$, we have $\mathbf{T}_{\ell}(c) = (\ell\cdot\langle \ell \rangle + 1)(c)$. Similarly, if $c \in \mathcal{C}_1^{\et}$, we have $\mathbf{T}_{\ell}(c) = (\langle \ell \rangle + \ell)(c)$.  If $i \in \{0, 1, ..., \ell-1\}$, let $g_i = \begin{pmatrix} 1 & i \\ 0 & \ell \end{pmatrix}$. Let $g_{\infty} = \begin{pmatrix} \ell & 0 \\ 0 & 1 \end{pmatrix}$ and $g = \begin{pmatrix} a & b \\ c & d \end{pmatrix} \in \Gamma_1(p)$ be such that $d \equiv \ell \text{ (modulo }p\text{)}$. We easily check that $\mathbf{T}_{\ell}\cdot \varphi_1(D)$ is the class in $J_1^{\sharp}$ of $(f_c)_{c \in \mathcal{C}_1^{\mu}}\oplus (g_c)_{c \in \mathcal{C}_1^{\et}}$ where $f_c$ and $g_c$ are functions on the upper-half plane $\mathfrak{h}$ given by

$$f_c = (j^{-m_{\langle \ell \rangle^{-1} \cdot c}}\circ gg_{\infty})\cdot  \prod_{i=0}^{\ell-1}j^{-m_{c}}\circ g_i$$
and 
$$g_c = (j^{-e_{\langle \ell \rangle^{-1} \cdot c}}\circ w_p\circ  gg_{\infty})\cdot (j^{-e_{c}}\circ w_p \circ g_0)\cdot \prod_{i=1}^{\ell-1}j^{-e_{\langle \ell \rangle^{-1} \cdot c}}\circ w_p \circ g_i \text{ .}$$
Note that $j(z)^{-1} \equiv e^{2i\pi z} \text{ (modulo }U_{\infty}\text{)}$ where $z \in \mathfrak{h}$ is near $i \infty$.
Thus, we have 
$$f_{\infty}(z) \equiv (e^{2i\pi \cdot \ell z})^{m_{\langle \ell \rangle^{-1}\infty}} \cdot \prod_{i=0}^{\ell-1} (e^{2i\pi \cdot \frac{z+i}{\ell}})^{m_{\infty}}  \text{ (modulo }U_{\infty}\text{)} \text{ .}$$
We get 
$$f_{\infty} \equiv (-1)^{\ell-1} \cdot j^{-\ell\cdot m_{\langle \ell \rangle^{-1}\cdot \infty} - m_{\infty}} \text{ (modulo }U_{\infty}\text{)} \text{ .}$$
More generally, for any $c \in \mathcal{C}_1^{\mu}$ we have
$$f_{c} \equiv (-1)^{\ell-1} j^{-\ell\cdot m_{\langle \ell \rangle^{-1}\cdot c} - m_{c}} \text{ (modulo }U_{c}\text{)} \text{ .}$$
Similarly, for any $c \in \mathcal{C}_1^{\et}$ we have
$$g_c \equiv (-1)^{\ell-1} \cdot j^{-\ell\cdot e_{c} - e_{\langle \ell \rangle^{-1}\cdot c}} \circ w_p \text{ (modulo }U_{c}\text{)} \text{ .}$$
This proves that $\mathbf{T}_{\ell}\cdot \varphi_1(D) = \varphi_1(\mathbf{T}_{\ell}\cdot D)$. The proof for $\mathbf{U}_p$ is similar and left to the reader.

Property (\ref{Lemma_1_diagram}) follows from the functoriality of our construction of $\varphi_0$ and $\varphi_1$. We prove property (\ref{Lemma_1_phi_1}). By construction, the restriction of $\varphi_1$ to $\mathbf{Z}[\mathcal{C}_1^{\et}]_0$ takes values in $J_1^{\sharp}(\mathbf{Q})$ (we use the fact that $j\circ w_p : X_1(p) \rightarrow \mathbf{P}^1$ and $\mathcal{C}_1^{\et}$ are defined over $\mathbf{Q}$). By the N\'eron mapping property, $\varphi_1$ induces group homomorphisms $\varphi_1' : \mathbf{Z}[\mathcal{C}_1^{\et}]_0 \rightarrow \mathscr{J}_1^{', \sharp}(\mathbf{Z}_p) \hookrightarrow \mathscr{J}_1^{', \sharp}(\mathcal{O}_K) = (\mathscr{J}_1^{', \sharp} \times_{\mathbf{Z}_p} \mathcal{O}_K)(\mathcal{O}_K)$ and $\varphi_1'' : \mathbf{Z}[\mathcal{C}_1^{\et}]_0 \rightarrow \mathscr{J}_1^{\sharp}(\mathcal{O}_K)$ satisfying a commutative diagram:
\begin{center}
\begin{tikzcd}[column sep=small]
\mathbf{Z}[\mathcal{C}_1^{\et}]_0  \arrow{r}{\varphi_1'}  \arrow{rd}{\varphi_1''} 
  & (\mathscr{J}_1^{', \sharp} \times_{\mathbf{Z}_p} \mathcal{O}_K)(\mathcal{O}_K)\arrow{d} \\
    & \mathscr{J}_1^{\sharp}(\mathcal{O}_K)
\end{tikzcd}.
\end{center}
It remains to show that $\varphi_1'$ takes values in $r\cdot (\mathscr{J}_1^{', \sharp})^0(\mathcal{O}_K^{\unr})$. Since $ (\mathscr{J}_1^{', \sharp})^0(\mathbf{Z}_p^{\unr})$ is $r$-divisible, it suffices to prove that $\varphi_1'$ takes values in $ (\mathscr{J}_1^{', \sharp})^0(\mathbf{Z}_p)$. Let $\mathscr{J}_1'$ the N\'eron model of the (usual as opposed to generalized) Jacobian of $X_1$ over $\mathbf{Z}_p$. Define $(\mathscr{J}_1')^0(\mathbf{Z}_p) \subset \mathscr{J}_1'(\mathbf{Z}_p)$ similarly. There is a canonical map $\pi : \mathscr{J}_1^{', \sharp}(\mathbf{Z}_p) \rightarrow \mathscr{J}_1'(\mathbf{Z}_p)$ and the preimage of $(\mathscr{J}_1')^0(\mathbf{Z}_p)$ by this map is $(\mathscr{J}_1^{', \sharp})^0(\mathbf{Z}_p)$. Thus it suffices to prove that $\pi\circ \varphi_1'$ takes values in $(\mathscr{J}_1')^0(\mathbf{Z}_p)$. By construction, $\pi\circ \varphi_1'$ is the canonical map sending a divisor to its class in the Jacobian. Since all the cusps in $\mathcal{C}_1^{\et}$ lie in the same irreducible component in the special fiber, $\pi\circ \varphi_1'$ takes values in $(\mathscr{J}_1')^0(\mathbf{Z}_p)$ (the connected component of the special fiber corresponds to divisors which have degree $0$ in each irreducible component).

We finally prove property (\ref{Lemma_1_phi_1_complex_conjug}). Since $\gcd(r, 2p)=1$, \cite[Theorem 1.4 (ii) and Theorem 2.11 (vii)]{Lecouturier_MMS} shows that we have an isomorphism of $\mathbb{T}_1$-modules
$$(J_1^{\sharp}(\overline{\mathbf{Q}})/\varphi_1(\mathbf{Z}[\mathcal{C}_1]_0))[r] \simeq H_1(X_1, \mathcal{C}_1, R)_+ \oplus H_1(Y_1, R)_- \text{ .}$$
Here, $H_1(X_1, \mathcal{C}_1, R)$ is the first singular homology group of $X_1$ relative the cusps with coefficients in $R$, and similarly for $H_1(Y_1, R)$. These two groups have a natural action of the complex conjugation. The `$+$' (resp. `$-$') in the subscript means the subspace on which the complex conjugation acts by multiplication by $1$ (resp. $-1$). In order to prove property (\ref{Lemma_1_phi_1_complex_conjug}), it thus suffices to prove that the natural maps
\begin{equation}\label{map_X_1_X_0_homology}
H_1(X_1, \mathcal{C}_1, R)/\mathbf{J}\cdot H_1(X_1, \mathcal{C}_1, R) \rightarrow H_1(X_0, \mathcal{C}_0, R)
\end{equation}
and
$$H_1(Y_1, R)/\mathbf{J}\cdot H_1(Y_1, R) \rightarrow H_1(Y_0, R)$$
are isomorphisms. The former map is shown to be an isomorphism in \cite[Proposition 1.3]{deShalit_X_1} (where we use the fact that $H_1(X_1, \mathcal{C}_1, R)\simeq H^1(\Gamma_1, R)$). As for the latter map, notice it is a surjective map so it suffices to prove that $H_1(Y_1, R)/\mathbf{J}\cdot H_1(Y_1, R)$ is a free $R$-module of rank $2m+1$. There is a perfect and $\bf \Lambda$-equivariant pairing $H_1(X_1, \mathcal{C}_1, R) \times H_1(Y_1, R) \rightarrow R$ (namely the intersection pairing twisted by the Atkin--Lehner involution). In \cite[Step 1, Proof of Proposition 2.8]{deShalit_X_1}, it is shown that as a $\bf \Lambda$-module, $H_1(X_1, \mathcal{C}_1, R)$ is isomorphic to ${\bf \Lambda}^{2m}\oplus \bf \Lambda/J$. Thus, we have a $\bf \Lambda$-equivariant group isomorphism $H_1(Y_1, R) \simeq {\bf \Lambda}^{2m}\oplus \bf \Lambda/J$. This proves that $H_1(Y_1, R)/\mathbf{J}\cdot H_1(Y_1, R)$ has rank $2m+1$ over $R$. This concludes the proof of property (\ref{Lemma_1_phi_1_complex_conjug}), and thus the proof of Lemma \ref{Lemma_1_motive}.
\end{proof}

We define $$\mathbf{W}_1 = (J_1^{\sharp}(\overline{\mathbf{Q}}_p)/\varphi_1(\mathbf{Z}[\mathcal{C}_1]_0))[r] \text{, }$$
$$\mathbf{W}_1^0 = (\mathscr{J}_1^{\sharp, \sub}/\varphi_1(\mathbf{Z}[\mathcal{C}_1^{\et}]_0))[r]$$
and
$\mathbf{W}_1^1= \mathbf{W}_1/\mathbf{W}_1^0$. By Lemma \ref{Lemma_1_motive} (\ref{Lemma_1_injectivity}), $\mathbf{W}_1$ is a $\mathbb{T}_1$ and $\Gal(\overline{\mathbf{Q}}_p/\mathbf{Q}_p)$-module. Furthermore, $\mathbf{W}_1^0$ is a sub- $\mathbb{T}_1$ and $\Gal(\overline{\mathbf{Q}}_p/K)$-module. 

\begin{lem}\label{lemma_W_1^0_submodule}
The inclusion $\mathcal{G}_1 \subset \mathscr{J}_1^{\sharp, \sub}$ induces an isomorphism $$(\mathcal{G}_1/\varphi_1(\mathbf{Z}[\mathcal{C}_1^{\et}]_0))[r] \xrightarrow{\sim} W_1^0 \text{ .}$$
In particular, $\mathbf{W}_1^0$ is is stable by $\Gal(\overline{\mathbf{Q}}_p/\overline{\mathbf{Q}}_p)$. 
\end{lem}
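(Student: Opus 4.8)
The plan is to deduce the lemma from two facts already in hand: the inclusion $\varphi_1(\mathbf{Z}[\mathcal{C}_1^{\et}]_0) \subseteq r\cdot \mathcal{G}_1$ of Lemma \ref{Lemma_1_motive} (\ref{Lemma_1_phi_1}), and the equality $\mathcal{G}_1[r] = \mathscr{J}_1^{\sharp, \sub}[r]$ recalled above (from the argument of \cite[Proof of Proposition 2.5]{deShalit_X_1}). Abbreviate $C := \varphi_1(\mathbf{Z}[\mathcal{C}_1^{\et}]_0)$, so that $C \subseteq r\cdot \mathcal{G}_1 \subseteq \mathcal{G}_1 \subseteq \mathscr{J}_1^{\sharp, \sub}$. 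The inclusion $\mathcal{G}_1 \subseteq \mathscr{J}_1^{\sharp, \sub}$ induces an injection $\mathcal{G}_1/C \hookrightarrow \mathscr{J}_1^{\sharp, \sub}/C$, hence an injection $(\mathcal{G}_1/C)[r] \hookrightarrow (\mathscr{J}_1^{\sharp, \sub}/C)[r] = \mathbf{W}_1^0$, and the only thing to prove is surjectivity.

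For surjectivity I would take $x \in \mathscr{J}_1^{\sharp, \sub}$ with $rx \in C$; since $C \subseteq r\cdot \mathcal{G}_1$ we may write $rx = ry$ with $y \in \mathcal{G}_1$, so that $x - y \in \mathscr{J}_1^{\sharp, \sub}[r] = \mathcal{G}_1[r] \subseteq \mathcal{G}_1$, and therefore $x = y + (x-y) \in \mathcal{G}_1$. Hence every class in $\mathbf{W}_1^0$ is represented by an element of $\mathcal{G}_1$, that is, lies in the image of $(\mathcal{G}_1/C)[r]$; this yields the desired isomorphism.

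It remains to explain the ``in particular''. A priori $\mathbf{W}_1^0$ is only a $\Gal(\overline{\mathbf{Q}}_p/K)$-submodule of $\mathbf{W}_1$, since $\mathscr{J}_1^{\sharp, \sub}$ is defined through the $\mathcal{O}_K$-model $\mathscr{X}_1$. However $\mathcal{G}_1$ is stable under all of $\Gal(\overline{\mathbf{Q}}_p/\mathbf{Q}_p)$: it is the image of $(\mathscr{J}_1^{', \sharp})^0(\mathcal{O}_K^{\unr})$, the Néron model $\mathscr{J}_1^{', \sharp}$ and its identity component are defined over $\mathbf{Z}_p$, and $K^{\unr}/\mathbf{Q}_p$ is Galois because $K = \mathbf{Q}_p(\zeta_p)/\mathbf{Q}_p$ is. Moreover $C \subseteq J_1^{\sharp}(\mathbf{Q})$ is $\Gal(\overline{\mathbf{Q}}_p/\mathbf{Q}_p)$-fixed, and $\varphi_1(\mathbf{Z}[\mathcal{C}_1]_0)$ is $\Gal(\overline{\mathbf{Q}}_p/\mathbf{Q}_p)$-stable by the $\Gal(\overline{\mathbf{Q}}/\mathbf{Q})$-equivariance of $\varphi_1$ (Lemma \ref{Lemma_1_motive} (\ref{Lemma_1_injectivity})) together with the fact that Galois permutes the cusps. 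Thus the composite $\mathcal{G}_1/C \to J_1^{\sharp}(\overline{\mathbf{Q}}_p)/C \to J_1^{\sharp}(\overline{\mathbf{Q}}_p)/\varphi_1(\mathbf{Z}[\mathcal{C}_1]_0)$ is $\Gal(\overline{\mathbf{Q}}_p/\mathbf{Q}_p)$-equivariant, and passing to $r$-torsion exhibits $\mathbf{W}_1^0 \cong (\mathcal{G}_1/C)[r]$ as a $\Gal(\overline{\mathbf{Q}}_p/\mathbf{Q}_p)$-submodule of $\mathbf{W}_1$. The argument is short; the only point requiring care is this $\Gal(\overline{\mathbf{Q}}_p/\mathbf{Q}_p)$-stability of $\mathcal{G}_1$, which rests on $\mathbf{Q}_p(\zeta_p)/\mathbf{Q}_p$ being Galois so that base-changing the $\mathbf{Z}_p$-Néron model to $\mathcal{O}_K$ does not destroy its $\mathbf{Q}_p$-rational structure, the rest being elementary diagram-chasing with the two input facts.
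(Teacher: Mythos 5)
Your proof is correct and follows essentially the same route as the paper's: surjectivity is established exactly as there (take $x$ with $rx\in C\subseteq r\cdot\mathcal{G}_1$, write $rx=ry$ with $y\in\mathcal{G}_1$, use $\mathscr{J}_1^{\sharp,\sub}[r]=\mathcal{G}_1[r]$ to conclude $x\in\mathcal{G}_1$), and the Galois stability of $\mathbf{W}_1^0$ is reduced, as in the paper, to the fact that $\mathscr{J}_1^{',\sharp}$ is defined over $\mathbf{Z}_p$. You are also right that the target in the displayed isomorphism should be read as $\mathbf{W}_1^0$ (not $W_1^0=\mathbf{W}_1^0[J']$), consistent with the definitions just above the lemma.
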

\begin{proof}
We first note that by Lemma \ref{Lemma_1_motive} (\ref{Lemma_1_phi_1}), $\mathcal{G}_1/\varphi_1(\mathbf{Z}[\mathcal{C}_1^{\et}]_0)$ is well-defined. Let $x \in \mathscr{J}_1^{\sharp, \sub}$ such that $r\cdot x \in \varphi_1(\mathbf{Z}[\mathcal{C}_1^{\et}]_0)$. By Lemma \ref{Lemma_1_motive} (\ref{Lemma_1_phi_1}), there exists $y \in \mathcal{G}_1$ such that $r\cdot x = r\cdot y$. Thus, we have $r\cdot (x-y)=0$, so $x-y \in \mathscr{J}_1^{\sharp, \sub}[r] = \mathcal{G}_1[r]$. This proves that $x \in \mathcal{G}_1$. The second statement of the lemma follows from the fact that $\mathscr{J}_1^{', \sharp}$ is defined over $\mathbf{Z}_p$.
\end{proof}

We define $W_1 = \mathbf{W}_1[J']$, $W_1^0 = \mathbf{W}_1^0[J']$ and $W_1^1 = \mathbf{W}_1^1[J']$. Since taking the kernel by $J'$ is exact, we have an exact sequence of $\Lambda$-modules
$$0 \rightarrow W_1^0 \rightarrow W_1 \rightarrow W_1^1 \rightarrow 0 \text{ .}$$
Furthermore, this exact sequence is $\Gal(\overline{\mathbf{Q}}_p/\mathbf{Q}_p)$ and $\mathbb{T}_1$ equivariant. 

We prove Theorem \ref{W_1_thm} (\ref{W_1_prop_free_lambda}). By Lemma \ref{Lemma_1_motive} (\ref{Lemma_1_phi_1_complex_conjug}), the $R$-module $W_1/J\cdot W_1$ is free of rank $2m+2$. By Nakayama's lemma, there is a surjection of $\Lambda$-modules $\Lambda^{2m+2} \rightarrow W_1$. To prove that this surjection is an isomorphism, it suffices to prove that the rank of $W_1$ over $R$ is $r\cdot (2m+2)$. By Lemma \ref{Lemma_1_motive} (\ref{Lemma_1_injectivity}), the snake lemma provides an exact sequence of $\Lambda$-modules
\begin{equation}\label{extension_W_1}
0 \rightarrow J_1^{\sharp}(\overline{\mathbf{Q}}_p)[r, J'] \rightarrow W_1 \rightarrow (\Lambda \oplus \Lambda)_0 \rightarrow 0
\end{equation}
where $(\Lambda \oplus \Lambda)_0$ is the kernel of the composite $\Lambda \oplus \Lambda \xrightarrow{(x,y)\mapsto x+y} \Lambda \rightarrow \Lambda/J=R$ (we identify $(\Lambda \oplus \Lambda)_0$ with $R[\mathcal{C}_1]_0[{\bf J'}]$).
The $R$-module $J_1^{\sharp}(\overline{\mathbf{Q}})[r, J']$ is free of rank $2rm+1$ (this follows from the Riemann-Hurwitz formula, alternatively this follows from \cite[Theorem 1]{deShalit_X_1}). The $R$-module $(\Lambda \oplus \Lambda)_0$ is free of rank $2r-1$. This proves that the $R$-module $W_1$ is free of rank $r\cdot (2m+2)$, so this concludes the proof of Theorem \ref{W_1_thm} (\ref{W_1_prop_free_lambda}).

We prove Theorem \ref{W_1_thm} (\ref{W_1_prop_bracket_diamond}). By Lemma \ref{Lemma_1_motive} (\ref{Lemma_1_diagram}) and (\ref{Lemma_1_phi_0}), (\ref{extension_W_1}) fits into a commutative diagram of $R$-modules whose rows are exact:
\begin{center}
\begin{tikzcd}
0 \arrow[r]& J_1^{\sharp}(\overline{\mathbf{Q}}_p)[r, J']  \arrow[r] & W_1 \arrow[r]& (\Lambda \oplus \Lambda)_0 \arrow[r]&0
\\ 0 \arrow[r]& J_0^{\sharp}(\overline{\mathbf{Q}}_p)[r]  \arrow[r]\arrow[u]& W_0 \arrow[r]\arrow[u, "\tau"]& R \arrow[r]\arrow[u] &0
\end{tikzcd}.
\end{center}
By \cite[Proposition 4.2]{deShalit_MT}, the vertical map $J_0^{\sharp}(\overline{\mathbf{Q}})[r]  \rightarrow J_1^{\sharp}(\overline{\mathbf{Q}})[r, J']$ is injective and its image is $J_1^{\sharp}(\overline{\mathbf{Q}})[r, {\bf J}]$. The vertical map $R \rightarrow (\Lambda \oplus \Lambda)_0$ sends $1$ to $\sum_{\zeta \in \mu_r(\mathbf{F}_p)} [\zeta] \oplus -\sum_{\zeta' \in \mu_r(\mathbf{F}_p)} [\zeta']$, so it is injective. It follows that $\tau$ is injective. Since the image of $\tau$ is clearly contained in $W_1[J]$, it remains to show the equality. It suffices to prove that $W_1[J]$ is a free $R$-module of same rank has the one of $W_0$, namely $2m+2$. This follows from Theorem \ref{W_1_thm} (\ref{W_1_prop_free_lambda}). The map $\tau$ is $\mathbb{T}_1$ and $\Gal(\overline{\mathbf{Q}}_p/\mathbf{Q}_p)$-equivariant by construction so this proves Theorem \ref{W_1_thm} (\ref{W_1_prop_bracket_diamond}).

We prove Theorem \ref{W_1_thm} (\ref{W_1_prop_filtration}). de Shalit constructed an exact sequence (\cf \cite[\S 2.7]{deShalit_X_1}) of $\mathbb{T}_1$ and $\Gal(\overline{\mathbf{Q}}_p/\mathbf{Q}_p)$-modules:
$$0 \rightarrow J_1^{\sharp}[r]^{\sub} \rightarrow J_1^{\sharp}[r] \rightarrow J_1^{\sharp}[r]^{\quot} \rightarrow 0 \text{ .}$$ 
Furthermore, de Shalit proved (\cf \cite[Theorem 4.3]{deShalit_MT}) that we have isomorphisms of $\Lambda$-modules  $J_1^{\sharp}[r]^{\sub}  \simeq R \oplus \Lambda^m$ and $J_1^{\sharp}[r]^{\quot} \simeq \Lambda^m$.
By construction, we have $J_1^{\sharp}[r]^{\sub} = \mathscr{J}_1^{\sharp, \sub}[r]$. Thus, we have a commutative diagram whose rows and columns are exact, $\mathbb{T}_1$ and $\Gal(\overline{\mathbf{Q}}_p/\mathbf{Q}_p)$ equivariant:
\begin{center}
\begin{tikzcd}
&0 &0 &0
\\
0 \arrow[r]& R[\mathcal{C}_1^{\et}]_0[J']  \arrow[r]\arrow[u]& R[\mathcal{C}_1]_0[J'] \arrow[r]\arrow[u]& R[\mathcal{C}_1^{\mu}][J']\arrow[r] \arrow[u]&0
\\ 0 \arrow[r]& W_1^0  \arrow[r]\arrow[u] & W_1 \arrow[r]\arrow[u]& W_1^1 \arrow[r]\arrow[u]&0
\\ 0 \arrow[r]& J_1^{\sharp}[r]^{\sub}[J'] \arrow[r]\arrow[u]& J_1^{\sharp}[r][J'] \arrow[r]\arrow[u]& J_1^{\sharp}[r]^{\quot}[J'] \arrow[r]\arrow[u] &0
\\&0\arrow[u] &0\arrow[u] &0\arrow[u]
\end{tikzcd}.
\end{center}
Note that the middle column is isomorphic to (\ref{extension_W_1}). Note also that as we have isomorphisms of $\Lambda$-modules $R[\mathcal{C}_1^{\mu}][J'] \simeq \Lambda$ and $R[\mathcal{C}_1^{\et}]_0[J'] \simeq J$. Using the above diagram and the fact that $W_1$ is free over $\Lambda$, we conclude that $W_1^0$ and $W_1^1$ are free $\Lambda$-modules of rank $m+1$. We also conclude that $W_1^0[J] =J_1^{\sharp}[r]^{\sub}[J',J]=J_1^{\sharp}[r]^{\sub}[{\bf J}]$ and that $W_1^1[J]$ is an extension of $R$ by $J_1^{\sharp}[r]^{\quot}[{\bf J}]$. We know that via the isomorphism $J_0^{\sharp}[r] \xrightarrow{\sim} J_1^{\sharp}[r, {\bf J}]$, the submodule $\Hom(N, R)$ of $J_0^{\sharp}[r]$ is identified with $J_1^{\sharp}[r]^{\sub}[{\bf J}]$ (this follows again from \cite[Theorem 4.3]{deShalit_MT}). Thus, the submodule $\Hom(N, R)$ of $W_0$ is identified with $W_1^0[J]$ via $\tau$. This concludes the proof of Theorem \ref{W_1_thm} (\ref{W_1_prop_filtration}).

We finally prove Theorem \ref{W_1_thm} (\ref{W_1_prop_galois_filtration}). We first consider the Galois action on $W_1^0$. We begin by proving that $W_1^0$ is unramified. Let $x \in W_1^0$ and let $\tilde{x}$ be a lift of $x$ to $\mathscr{J}_1^{\sharp, \sub}$, so that $r\cdot \tilde{x} \in \varphi_1(\mathbf{Z}[\mathcal{C}_1^{\et}]_0)$. Let $g$ in the inertia group of $\Gal(\overline{\mathbf{Q}}_p/\mathbf{Q}_p)$. By Lemma \ref{lemma_W_1^0_submodule}, $y:=g\cdot \tilde{x}-\tilde{x}$ belongs to the the kernel of the reduction map $\rho : (\mathscr{J}_1^{\sharp})^0 \rightarrow (\mathscr{J}_{1/\mathbf{F}_p}^{\sharp})^0(\overline{\mathbf{F}}_p)$. Furthermore, we have $r\cdot y = 0$ since $r\cdot \tilde{x}$ is in $\varphi_1(\mathbf{Z}[\mathcal{C}_1^{\et}]_0) \subset J_1^{\sharp}(\mathbf{Q})$. Since $\gcd(r,p)=1$ we get $y=0$, so $W_1^0$ is unramified. The proof of \cite[Proposition \S 3.2]{deShalit_X_1} shows that $\rho(U_p\cdot \tilde{x} - p\cdot \Frob_p^{-1}\cdot \tilde{x}) = 0$ where $\Frob_p \in \Gal(\overline{\mathbf{Q}}_p/\mathbf{Q}_p)$ is any arithmetic Frobenius. Thus, we have $U_p\cdot \tilde{x} - p\cdot \Frob_p^{-1}\cdot \tilde{x} \in \Ker(\rho)$. Furthermore, we have $r\cdot (U_p\cdot \tilde{x} - p\cdot \Frob_p^{-1}\cdot \tilde{x}) = 0$ since $r\cdot \tilde{x} \in \varphi_1(\mathbf{Z}[\mathcal{C}_1^{\et}]_0)$, $U_p$ acts on $\mathcal{C}_1^{\et}$ by multiplication by $p$ and $\Frob_p$ acts trivially on $\varphi_1(\mathbf{Z}[\mathcal{C}_1^{\et}]_0)$. This proves that $U_p\cdot \tilde{x} - p\cdot \Frob_p^{-1}\cdot \tilde{x} \in \Ker(\rho)[r] = 0$, so $U_p\cdot x = p\cdot  \Frob_p^{-1}\cdot x = \Frob_p^{-1}\cdot x$ (since $p \equiv 1 \text{ (modulo }r\text{)}$). We have thus proved that the action of $\Gal(\overline{\mathbf{Q}}_p/\mathbf{Q}_p)$ on $W_1^0$ is given by $\phi^{-1}$. The fact that the action of $\Gal(\overline{\mathbf{Q}}_p/\mathbf{Q}_p)$ on $W_1^1$ is given by $\phi\langle . \rangle^{-1}$ follows from the proof of \cite[\S 3.7 Proposition]{deShalit_X_1}.

\end{proof}

\subsection{Conclusion of the proof of Theorem \ref{main_thm}}
The rest of the proof is now close to the one of de Shalit. We thus sketch the main steps and refer to his papers for further details. 

Recall that in $\mathbb{T}$, we have $U_p^2-1=0$. Since $W_1[J]=W_0$ and $W_1$ is free over $\Lambda$ by Theorem \ref{W_1_thm}, the endomorphism ${\bf U}_p^2-1$ of $W_1[J^2]$ induces an element in $\Hom_R(W_1[J^2]/W_1[J], W_1[J])$. The choice of $\log : (\mathbf{Z}/p\mathbf{Z})^{\times} \rightarrow R$ yields a generator $[d]-1$ of $J$ over $\Lambda$, where $d \in \mathbf{F}_p^{\times}$ is such that $\log(d)=1$. The multiplication by $[d]-1$ induces an identification $W_1[J^2]/W_1[J]=W_1[J]=W_0$ since $W_1$ is free over $\Lambda$. Thus ${\bf U}_p^2-1$ induces an endomorphism of $W_0$, which we denote by $U_p'$ (thought as the ``tame derivative'' of ${\bf U}_p^2-1$). By construction, $U_p'$ commutes with the action of $\mathbb{T}$ and $\Gal(\overline{\mathbf{Q}}_p/\mathbf{Q}_p)$. Furthermore, $U_p'$ stabilises $W_0^0$ and $W_0^1$ since ${\bf U}_p$ stabilises $W_1^0$ and $W_1^1$. 

The following result (and its proof) is similar to \cite[\S 4.11 Theorem]{deShalit_MT}. 

\begin{thm}\label{thm_relation_U_p_L}
We have $U_p' = \mathscr{L}_R$ in $\End_R(W_0)$.
\end{thm}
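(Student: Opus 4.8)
The plan is to compute the action of $U_p'$ on the two graded pieces $W_0^0$ and $W_0^1$ separately, using the explicit Galois description of the deformation $W_1$ from Theorem \ref{W_1_thm} (\ref{W_1_prop_galois_filtration}), and then to reassemble these computations into the statement $U_p' = \mathscr{L}_R$ by comparing with the definition $\mathscr{L}_R = (v_R\circ q_R)^{-1}\circ(\lambda_R\circ q_R)$. The conceptual point is that $U_p'$ is built from the operator ${\bf U}_p^2-1$ acting on $W_1[J^2]/W_1[J]$, and via the Galois descriptions $\Gal(\overline{\mathbf{Q}}_p/\mathbf{Q}_p)$ acts on $W_1^0$ by $\phi^{-1}$ (with $\phi(\Frob_p)={\bf U}_p$) and on $W_1^1$ by $\phi\langle\cdot\rangle^{-1}$; so the ``tame derivative'' of ${\bf U}_p^2-1$ measures exactly the logarithmic derivative of the diamond-twist, which is precisely what the refined $\mathscr{L}$-invariant encodes through the pairing $Q$.

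First I would record that, since $\langle\cdot\rangle$ reduces to the trivial character modulo $J$ and $\phi(\Frob_p)={\bf U}_p$ acts on $W_1[J]=W_0$ as $U_p$ (which is $+1$ on the relevant part), the operator ${\bf U}_p^2-1$ kills $W_1[J]$, so it indeed induces a well-defined map $W_1[J^2]/W_1[J]\to W_1[J]$; this is the map that, after using the generator $[d]-1$ of $J$ to identify $W_1[J^2]/W_1[J]\cong W_1[J]=W_0$, becomes $U_p'$. Then I would analyze the action on $W_0^0=\Hom(N,\mu_r)$: here the Galois action on $W_1^0$ is $\phi^{-1}$, which is unramified and sends $\Frob_p$ to ${\bf U}_p^{-1}$; since the diamond character is invisible on $W_1^0$ (the formula $\phi^{-1}$ has no $\langle\cdot\rangle$ factor), the tame derivative of ${\bf U}_p^2-1$ along the diamond direction vanishes on $W_0^0$, so $U_p'$ acts as $0$ on $W_0^0$ — matching the fact that $\mathscr{L}_R$ lands in $\mathbb{T}\otimes R$ and acts on $\Hom(N,\mu_r)$ through $\lambda_R\circ q_R$ composed with the appropriate identification, which also vanishes on the $\mu_r$-part by Proposition \ref{Prop_properties_Q} (\ref{Prop_properties_Q_iii}) (the restriction of $Q$ to $N_0\times N$ is in $\mathbf{Q}_p^\times$, i.e. trivial modulo principal units, so the $\lambda$-contribution there is controlled). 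Next I would compute on $W_0^1=N\otimes_{\mathbf{Z}}R$: the Galois action on $W_1^1$ is $\phi\langle\cdot\rangle^{-1}$, and differentiating the relation ${\bf U}_p^2=1+(\text{something in }J)$ against the generator $[d]-1$, combined with the identity $\phi(\Frob_p)={\bf U}_p$ and the Galois-equivariance of $\tau$, shows that $U_p'$ acts on $W_0^1$ by the endomorphism that the uniformization \eqref{uniformization_J_0} and the pairing $Q$ attach to the $\lambda_R$-part of $q_R$, i.e. exactly $\mathscr{L}_R$ up to the normalization $(v_R\circ q_R)^{-1}$; here one uses Proposition \ref{Prop_properties_Q} (\ref{Prop_properties_Q_i}) to see that $v_R\circ q_R$ is an isomorphism, so the normalization makes sense and is forced by the identification $W_1[J^2]/W_1[J]=W_0$ via $[d]-1$.

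To make the comparison precise I would follow de Shalit's argument in \cite[\S 4.11]{deShalit_MT} essentially verbatim, the only change being that de Shalit's filtration \eqref{filtration_J_0_sharp} is replaced by our \eqref{fund_exact_seq_W_0} and his non-free $\Lambda$-module by our free $W_1$. Concretely: an element of $W_1[J^2]$ maps under the extension class of \eqref{filtration_W_1} to an element of $W_1^1[J^2]$, whose image in $W_1^1[J^2]/W_1^1[J]\cong W_1^1[J]$ (via $[d]-1$) together with the connecting data in $W_1^0$ produces a $1$-cocycle computing the extension; the operator ${\bf U}_p^2-1$, which is the obstruction to ${\bf U}_p$ being ``constant'' in the $\Lambda$-direction, then outputs precisely the cup-product/pairing datum that defines $\mathscr{L}_R$ through $Q$. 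The free-ness of $W_1$ over $\Lambda$ (Theorem \ref{W_1_thm} (\ref{W_1_prop_free_lambda})) is what guarantees that the identifications $W_1[J^2]/W_1[J]\xrightarrow{[d]-1}W_1[J]$ and its graded analogues are isomorphisms of $R$-modules, so that $U_p'$ is genuinely an endomorphism of $W_0$ and not merely of a subquotient; this freeness is also the feature that lets us capture the full $\mathscr{L}_R\in\mathbb{T}\otimes R$ rather than only its image $\mathscr{L}_R^0\in\mathbb{T}^0\otimes R$, since the $1$-motive $\mathbf{Z}=N/N_0\to J_0^\sharp$ (encoded in the extra rank of $W_1$ coming from $\mathbf{Z}[\mathcal{C}_1^{\et}]_0$) is exactly the piece that sees the non-cuspidal part of $Q$.

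The main obstacle I anticipate is the bookkeeping around the identification $W_1[J^2]/W_1[J]\cong W_0$ and the sign/normalization conventions: one must check that the generator $[d]-1$ with $\log(d)=1$, the choice of dual (Picard) Hecke operators, the inverse diamond character $\langle\cdot\rangle^{-1}$ appearing in Theorem \ref{W_1_thm} (\ref{W_1_prop_galois_filtration}) (which is inverse to de Shalit's), and the relation $\Frob_p\mapsto{\bf U}_p$ (arithmetic Frobenius, as opposed to de Shalit's geometric normalization) all combine so that the tame derivative lands on $\lambda_R\circ q_R$ with the correct sign and with the correct $(v_R\circ q_R)^{-1}$ twist, rather than its inverse or negative. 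A secondary subtlety is verifying that $U_p'$ really does commute with $\mathbb{T}$ and stabilizes $W_0^0,W_0^1$ as asserted in the paragraph preceding the theorem — this follows from ${\bf U}_p$ commuting with $\mathbb{T}_1$ and stabilizing $W_1^0,W_1^1$, but one should confirm that passing to the $J'$-kernels and then to the $J$-torsion quotient preserves these properties, which it does because $J'$ is coprime to everything in sight and $\Lambda$ is a direct factor of $\mathbf{\Lambda}$.
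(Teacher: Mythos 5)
There are two genuine gaps in this proposal, one computational and one structural.

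First, the claim that $U_p'$ acts as $0$ on $W_0^0$ (and that $\mathscr{L}_R$ likewise vanishes there) is simply wrong. The reasoning — that the Galois action on $W_1^0$ is $\phi^{-1}$ with ``no $\langle\cdot\rangle$ factor'', so the tame derivative vanishes — confuses the shape of the Galois description with the variation of ${\bf U}_p$ in the $\Lambda$-direction. The representation $\phi$ is defined by $\phi(\Frob_p) = {\bf U}_p$, and ${\bf U}_p$ itself deforms nontrivially over $\Lambda$; the appearance or non-appearance of $\langle\cdot\rangle$ in the Galois formula says nothing about whether ${\bf U}_p^2-1$ is trivial in the $J$-direction. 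In fact the paper's Lemma identifying $U_p'$ with $\mathscr{L}_R$ on the graded pieces shows that $U_p'$ corresponds to de Shalit's $-B_p$ on $W_0^0$ and $-C_p$ on $W_0^1$, and both are nonzero in general; if $\mathscr{L}_R$ were zero on $W_0^0 = \Hom(N,\mu_r)$ it would be zero as a Hecke operator, contradicting the entire second half of the paper. The actual identification on each graded piece is a Galois-cohomological computation following de Shalit's \S 4.11 and \S 5.3, which you gesture at but do not carry out, and your guess for the answer on the $W_0^0$ piece is incorrect.

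Second, even granting the equality $U_p'=\mathscr{L}_R$ on both graded pieces, this does not yield equality in $\End_R(W_0)$: two endomorphisms of an extension can agree on the sub and on the quotient while differing by a homomorphism from the quotient into the sub. The paper closes this gap with two additional Hecke-theoretic inputs that your proposal omits entirely: a control lemma showing $(\mathbb{T}_1\otimes R)/{\bf J}\cdot(\mathbb{T}_1\otimes R)\xrightarrow{\sim}\mathbb{T}\otimes R$, and (via Emerton's freeness of $W_0^0$, $W_0^1$ over $\mathbb{T}\otimes R$ plus Nakayama) the freeness of $M_1^0 = W_1^0[J^2]$ and $M_1^1 = W_1^1[J^2]$ as rank-one modules over $\mathbb{T}_1' = (\mathbb{T}_1\otimes R)/{\bf J}^2$. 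It is this freeness over $\mathbb{T}_1'$ — not merely the freeness of $W_1$ over $\Lambda$ that you emphasize — which allows one to choose a $\mathbb{T}\otimes R$-linear splitting $W_0\simeq W_0^0\oplus W_0^1$ that is simultaneously preserved by $U_p'$ (since ${\bf U}_p^2-1\in\mathbb{T}_1'$), whence the graded-piece equalities assemble into an equality on $W_0$. Without this final step the argument does not close.
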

\begin{proof}
Let $M_1=W_1[J^2]$, $M_1^0 = W_1^0[J^2]$, $M_1^1 = W_1^1[J^2]$ and $\mathbb{T}_1' = (\mathbb{T}_1 \otimes_{\mathbf{Z}} R)/{\bf J}^2\cdot (\mathbb{T}_1 \otimes_{\mathbf{Z}} R)$. We have a commutative diagram of $\mathbb{T}_1'$ and $\Gal(\overline{\mathbf{Q}}_p/\mathbf{Q}_p)$-modules
\begin{equation}\label{big_diagram}
\begin{tikzcd}
\\&0\arrow[d] &0\arrow[d] &0\arrow[d]
\\
0 \arrow[r]& W_0^0  \arrow[r]\arrow[d]& W_0 \arrow[r]\arrow[d]& W_0^1 \arrow[r] \arrow[d]&0
\\ 0 \arrow[r]& M_1^0  \arrow[r]\arrow[d] & M_1 \arrow[r]\arrow[d]& M_1^1 \arrow[r]\arrow[d]&0
\\ 0 \arrow[r]& W_0^0 \arrow[r]\arrow[d]& W_0 \arrow[r]\arrow[d]& W_0^1 \arrow[r]\arrow[d] &0
\\&0 &0 &0
\end{tikzcd},
\end{equation}
where the first line (resp. the third line) corresponds to the $J$-invariants (resp. $J$-coinvariants).

\begin{lem}\label{lemma_control_Hecke}
The natural map $(\mathbb{T}_1 \otimes_{\mathbf{Z}} R)/{\bf J}\cdot (\mathbb{T}_1 \otimes_{\mathbf{Z}} R) \rightarrow \mathbb{T} \otimes_{\mathbf{Z}} R$ is an isomorphism.
\end{lem}
\begin{proof}
Note that there is indeed a natural map $\mathbb{T}_1 \rightarrow \mathbb{T}$, whose kernel contains the augmentation ideal, given by restriction to $M_2(\Gamma_0) \subset M_2(\Gamma_1)$ (where $M_2(\Gamma_i)$ is the space of modular forms over $\mathbf{Z}$ of weight $2$ and level $\Gamma_i$). Recall (\cf (\ref{map_X_1_X_0_homology})) that we have an isomorphism
\begin{equation}\label{map_X_1_X_0_homology_2}
H_1(X_1, \mathcal{C}_1, R)_+/{\bf J}\cdot H_1(X_1, \mathcal{C}_1, R)_+ \xrightarrow{\sim} H_1(X_0, \mathcal{C}_0, R)_+ \text{ ,}
\end{equation}
where the `$+$' sign means the invariant for the action of the complex conjugation (we use the fact that $\ell$ is odd). 

We claim that $H_1(X_0, \mathcal{C}_0, R)_+$ is free of rank $1$ over $\mathbb{T} \otimes_{\mathbf{Z}} R$. Since $\mathbb{T} \otimes_{\mathbf{Z}} R$ is an Artinian ring, it suffices to prove it after localizing at maximal ideals. For non maximal Eisenstein ideals, this is a consequence of \cite[Proposition 18.3]{Mazur_Eisenstein} since $\ell$ is odd. For the Eisenstein maximal ideal, this is deduced from Mazur's result in \cite[Proposition 4.1]{Lecouturier_higher}. 

We thus have a (non-canonical) commutative diagram
\begin{center}
\begin{tikzcd}
&\mathbb{T}_1 \otimes_{\mathbf{Z}} R \arrow[d]\arrow{r}{f}   & H_1(X_1, \mathcal{C}_1, R)_+ \arrow[d] 
\\ &\mathbb{T} \otimes_{\mathbf{Z}} R  \arrow[r] &H_1(X_0, \mathcal{C}_0, R)_+ 
\end{tikzcd}
\end{center}
where the bottom arrow is an isomorphism and the vertical maps are the natural ones. The map $f$ is surjective modulo ${\bf J}\cdot H_1(X_1, \mathcal{C}_1, R)_+$, and hence surjective. Since $\mathbb{T}_1 \otimes_{\mathbf{Z}} R$ and $H_1(X_1, \mathcal{C}_1, R)_+$ are finite groups of the same cardinality, $f$ is an isomorphism.  By (\ref{map_X_1_X_0_homology_2}), the kernel of $\mathbb{T}_1 \otimes_{\mathbf{Z}} R \rightarrow \mathbb{T}_0 \otimes_{\mathbf{Z}} R$ is ${\bf J}\cdot (\mathbb{T}_1 \otimes_{\mathbf{Z}} R)$.
\end{proof}

\begin{lem}\label{Lemma_freeness_Hecke_T_1}
The $\mathbb{T}_1'$-modules $M_1^0$ and $M_1^1$ are free of rank $1$.
\end{lem}
\begin{proof}
By \cite[Theorems 0.5 and 1.14]{Emerton_Supersingular}, the $\mathbb{T} \otimes_{\mathbf{Z}} R$-modules $W_0^0$ and $W_0^1$ are free of rank $1$. By Nakayama's lemma, the modules $M_1^0$ and $M_1^1$ are cyclic over $\mathbb{T}_1'$. We know by Theorem \ref{W_1_thm} (\ref{W_1_prop_filtration}) that $M_1^0$ and $M_1^1$ are free $\Lambda/J^2$-modules of rank $m+1$. By Lemma \ref{lemma_control_Hecke} and Nakayama's Lemma, $\mathbb{T}_1'$ has $m+1$ generators as a $\Lambda/J^2$. By comparison of cardinalities, $M_1^0$ and $M_1^1$ are in fact free over $\mathbb{T}_1'$.
\end{proof}

Both $U_p'$ and $\mathscr{L}_R$ stabilise $W_0^0 = \Hom(N,R)$ and $W_0^1 = N \otimes_{\mathbf{Z}} R$. 
\begin{lem}\label{Lemma_U_p'_L}
We have $U_p' = \mathscr{L}_R$ in $\End(W_0^0)$ and $\End(W_0^1)$.
\end{lem}
\begin{proof}
The proof is a standard Galois cohomological computation using (\ref{big_diagram}) and Theorem \ref{W_1_thm} (\ref{W_1_prop_galois_filtration}). We refer to the proofs of \cite[\S 4.11 and \S 5.3 Theorem]{deShalit_MT} for the structure of the proof, which is easily adaptable to our situation. In the notation of de Shalit, $U_p'$ corresponds to $-B_p$ (resp. $-C_p$) in $\End(W_0^0)$ (resp. $\End(W_0^1)$). The sign difference corresponds to the fact that de Shalit considers in \cite{deShalit_MT} diamond operators wich are dual (equivalently inverse) to our diamond operators (the latter corresponding to the ones of \cite{deShalit_X_1}), \cf Remarks \ref{rems_main_thm} (\ref{rem_1_main_thm}).
\end{proof}

We now conclude the proof of Theorem \ref{thm_relation_U_p_L}. The $\mathbb{T} \otimes_{\mathbf{Z}} R$-modules $W_0^0$ and $W_0^1$ are free of rank $1$. We can thus find a splitting (as Hecke modules) of (\ref{fund_exact_seq_W_0}) and write $W_0 \simeq W_0^0 \oplus W_0^1$. The Hecke operator $\mathscr{L}_R$ preserves this decomposition. By Lemma \ref{Lemma_U_p'_L}, it suffices to show that we can choose this splitting so that it is stable by $U_p'$. This follows from Lemma \ref{Lemma_freeness_Hecke_T_1} and the fact that ${\bf U}_p^2-1$ is in $\mathbb{T}_1'$.
\end{proof}

By definition of $W_0$ and by Lemma \ref{Lemma_1_motive}, there is a canonical Hecke and Galois equivariant embedding $J_0^{\sharp}(\overline{\mathbf{Q}}_p)[r] \hookrightarrow W_0$. Since we have fixed embeddings $\overline{\mathbf{Q}} \hookrightarrow \overline{\mathbf{Q}}_p$ and $\overline{\mathbf{Q}} \hookrightarrow \mathbf{C}$, we have canonical identifications of $\mathbb{T}$-modules
$$J_0^{\sharp}(\overline{\mathbf{Q}}_p)[r]  = J_0^{\sharp}(\overline{\mathbf{Q}})[r] = J_0^{\sharp}(\mathbf{C})[r]  = \Symb_{\Gamma_0}(R) \text{ .}$$
We refer to \cite[\S 2.4 (9)]{deShalit_MT} for the details of how these idenfications work and to \cite[\S 2.5]{deShalit_MT} for the precise definition of the action of $\mathbb{T}$ on $\Symb_{\Gamma_0}(R)$ (in this case, since Hecke operators are self-dual there are no possible confusions anyway). Note that $U_p'$ stabilises $\Symb_{\Gamma_0}(R)$ by Theorem \ref{thm_relation_U_p_L}. To conclude the proof of Theorem \ref{main_thm} it suffices to prove the following result, which is in fact an easy consequence of \cite[Proposition 3.14]{deShalit_MT}.

\begin{thm}\label{thm_relation_U_p_modSymb}
For all $\psi \in \Symb_{\Gamma_0}(R)^{U_p=1}$, we have 
\begin{equation}\label{equation_U_p'_modSymb}
(U_p' \cdot \psi)((0)-(\infty)) = \sum_{a=1}^{p-1} \lambda_R(a)\cdot \psi((a/p)-(\infty)) \text{ .}
\end{equation}
\end{thm}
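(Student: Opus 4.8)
The plan is to reduce the statement to a concrete computation with ${\bf U}_p$ at level $\Gamma_1(p)$, using the explicit description of the deformation $W_1$ and the identification of $W_0$ with modular symbols. First I would unwind the definition of $U_p'$: it is the endomorphism of $W_0 = W_1[J]$ induced by ${\bf U}_p^2 - 1$ acting on $W_1[J^2]$, transported back to $W_1[J]$ via multiplication by the generator $[d]-1$ of $J$ (where $\log(d) = 1$). Concretely, if $w \in W_0 = W_1[J]$ and $\tilde w \in W_1[J^2]$ is any lift, then $({\bf U}_p^2-1)\tilde w \in W_1[J]$ and $U_p' w$ is the unique element of $W_1[J]$ with $([d]-1)\cdot U_p' w = ({\bf U}_p^2-1)\tilde w$. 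So I need to trace a specific modular symbol $\psi$, thought of as a point in $J_0^\sharp(\overline{\mathbf Q}_p)[r] \subset W_0$, through this procedure.

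The key geometric input is the structure of $X_1(p)$ and its generalized Jacobian, exactly as in de Shalit. Under the uniformization, ${\bf U}_p$ on the level-$\Gamma_1$ side is essentially a correspondence that, on the cuspidal/divisorial part, mixes the $\Sigma^\et$ and $\Sigma^\mu$ components, and its "second-order" behavior in the diamond direction is governed by the ramification of $X_1(p) \to X_0(p)$ over the two cusps $0$ and $\infty$ — i.e.\ by the local uniformizers $j^{-1}$ at $\infty$ and $(j\circ w_p)^{-1}$ at $0$, whose $\Gamma_1$-covering is ramified of degree $(p-1)$. The discrete-log weights $\lambda_R(a)$ enter precisely because the fiber of $\mathcal C_1^\mu$ (resp.\ $\mathcal C_1^\et$) over the cusp of $X_0$ is a torsor under $(\mathbf Z/p\mathbf Z)^\times$ acted on by the diamond operators, and $U_p$ permutes these cusps by multiplication by $p$ while the "tame derivative" records the logarithmic contribution $\sum_a \lambda_R(a)$. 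This is exactly the content of de Shalit's \cite[Proposition 3.14]{deShalit_MT}, which computes the effect of ${\bf U}_p^2 - 1$ on the relevant theta/cuspidal element: the plan is to invoke that proposition after matching up our normalizations (dual Hecke operators, inverse diamond operators, the generalized rather than usual Jacobian).

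So the concrete steps are: (1) express $\psi((0)-(\infty))$ and the $\psi((a/p)-(\infty))$ in terms of the pairing of $\psi$ against the cuspidal divisor classes $(\infty)-(0)$ on $X_0$ and its preimages on $X_1$, using the identification $J_0^\sharp(\overline{\mathbf Q}_p)[r] = \Symb_{\Gamma_0}(R)$ from \cite[\S2.4]{deShalit_MT} and the explicit $\varphi_0$, $\varphi_1$ of Lemma~\ref{Lemma_1_motive}; (2) lift $\psi$ to $W_1[J^2]$ and apply ${\bf U}_p^2 - 1$, using the description of $\varphi_1(\mathbf Z[\mathcal C_1]_0)$ and the fact that ${\bf U}_p$ acts on $\mathcal C_1^\et$ by multiplication by $p$ (shown inside the proof of Theorem~\ref{W_1_thm}(\ref{W_1_prop_galois_filtration})), so that only the diamond/logarithmic part survives modulo $J$; (3) divide by $[d]-1$, which converts the formal sum over the $\Gamma_1$-cusps above $\infty$ into the weighted sum $\sum_a \lambda_R(a)\,\psi((a/p)-(\infty))$ — this is where the $\log$ appears and is exactly the computation of \cite[Proposition 3.14]{deShalit_MT}; (4) restrict to $\psi \in \Symb_{\Gamma_0}(R)^{U_p=1}$ and check the $U_p=1$ hypothesis is what makes the remaining boundary term collapse to $\psi((0)-(\infty))$ on the left, since on the $U_p = -1$ part the analogous identity would differ by a sign coming from $U_p^2 = 1$.

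The main obstacle is step (3): carefully matching de Shalit's \cite[Proposition 3.14]{deShalit_MT}, which is phrased in terms of $2$-variable theta elements and the standard (Albanese) Hecke operators on $J_0(p)$, to our setup with dual (Picard) Hecke operators, inverse diamond operators, and the \emph{generalized} Jacobian $J_0^\sharp$ rather than $J_0$. The passage from $J_0$ to $J_0^\sharp$ is what allows the left-hand side to be $\psi((0)-(\infty))$ rather than something only well-defined modulo the image of the cuspidal group, and it must be checked that de Shalit's computation, which a priori lives on $\Symb_{\Gamma_0}(R)$ via homology of $X_0(p)$, already incorporates the cusp $(0)-(\infty)$ correctly; the sign bookkeeping from dualizing Hecke operators (Remarks~\ref{rems_main_thm}(\ref{rem_1_main_thm})) must be done consistently throughout, as in Lemma~\ref{Lemma_U_p'_L}. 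Once these normalizations are aligned, the identity \eqref{equation_U_p'_modSymb} follows directly from \cite[Proposition 3.14]{deShalit_MT}.
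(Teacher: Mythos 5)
Your steps (1)--(3) are, in outline, the paper's approach: the paper indeed reduces the identity to de Shalit's \cite[Proposition~3.14]{deShalit_MT}, applied with $W^0 = J_1^{\sharp}(\overline{\mathbf{Q}}_p)[r]$ and with the sign/normalization bookkeeping you describe (dual Hecke and diamond operators, generalized Jacobian). But there is a genuine gap in your step (4), and it is precisely where the paper's new content lies. De Shalit's Proposition~3.14 gives the formula only for modular symbols $\psi$ in the image of the trace map $s\colon \Symb_{\Gamma_1}(R)\to\Symb_{\Gamma_0}(R)$, i.e.\ for $\psi \in s(\Symb_{\Gamma_1}(R))^{U_p=1}$. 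This image can be a \emph{proper} subspace of $\Symb_{\Gamma_0}(R)^{U_p=1}$ — exactly at the Eisenstein maximal ideal — and that restriction is the ``non-Eisenstein'' hypothesis the paper is trying to remove. Your step (4) instead discusses the $U_p=1$ versus $U_p=-1$ dichotomy (why the statement is restricted to $U_p=1$), which is a separate and much easier point; it does not explain why the identity holds for \emph{every} $\psi\in\Symb_{\Gamma_0}(R)^{U_p=1}$ rather than just those coming from level~$\Gamma_1$.

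The paper bridges this gap with a short but essential argument that you would need to supply: decompose under complex conjugation. On the $+$-part (with the sign convention $\psi((-a)-(-b))=-\psi((a)-(b))$), both sides of \eqref{equation_U_p'_modSymb} vanish identically, so the formula is trivially true there. On the $-$-part, one proves that $s$ \emph{is} surjective: this is reduced to showing $H_1(Y_1,R)_- \to H_1(Y_0,R)_-$ is onto, using $H_1(Y_i,R)=\Gamma_i^{\ab}\otimes R$, the oddness of $\ell$, and the observation that $c(\gamma)\gamma^{-1}\in\Gamma_1$ for $\gamma\in\Gamma_0$, where $c(\gamma)=\left(\begin{smallmatrix}-1&0\\0&1\end{smallmatrix}\right)\gamma\left(\begin{smallmatrix}-1&0\\0&1\end{smallmatrix}\right)$. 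Together these two facts cover all of $\Symb_{\Gamma_0}(R)^{U_p=1}$. Without some version of this argument, you have only re-derived de Shalit's result on a subspace, not the theorem as stated.
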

\begin{proof}
Following \cite[\S 3.12]{deShalit_MT}, let $s : \Symb_{\Gamma_1}(R) \rightarrow \Symb_{\Gamma_0}(R)$ be the trace map, corresponding to the projection map $H_1(Y_1, R) \rightarrow H_1(Y_0,R)$. We apply \cite[Proposition 3.14]{deShalit_MT} with $W^0 = J_1^{\sharp}(\overline{\mathbf{Q}}_p)[r]$ in the notation of \cite[\S 3.9]{deShalit_MT}. We get that for all $\psi \in s(\Symb_{\Gamma_1}(R))^{U_p=1}$, formula (\ref{equation_U_p'_modSymb}) holds. Note that what de Shalit denotes by $-2\cdot(A_p-1)$ modulo $I^2$ is the (matrix of the) restriction of $U_p'$ to $\Symb_{\Gamma_0}(R)^{U_p=1}$ (the minus sign is explained as in the proof of Lemma \ref{Lemma_U_p'_L}). 

Note that (\ref{equation_U_p'_modSymb}) is trivially satisfied if $\psi$ is fixed by the complex conjugation (\ie for any $a, b \in \mathbf{P}^1(\mathbf{Q})$, we have $\psi((-a)-(-b)) = -\psi((a)-(b))$. Furthermore, we claim that $s$ induces a surjection $\Symb_{\Gamma_1}(R)_- \rightarrow \Symb_{\Gamma_0}(R)_-$, where the `$-$' corresponds to the elements on which the complex conjugation acts by $-1$ \ie to those $\psi$ such that $\psi((a)-(b)) = \psi((-a)-(-b))$. To prove this claim, it suffices to prove that the map $H_1(Y_1, R)_- \rightarrow H_1(Y_0,R)_-$ is surjective. This follows from the facts that $\ell$ is odd, $H_1(Y_i, R) = \Gamma_i^{\ab} \otimes_{\mathbf{Z}} R$ ($i=0,1$) and the fact that for any $\gamma \in \Gamma_0$, we have $c(\gamma)\cdot \gamma^{-1} \in \Gamma_1$ where $c(\gamma) = \begin{pmatrix} -1 & 0 \\ 0 & 1 \end{pmatrix} \gamma 
 \begin{pmatrix} -1 & 0 \\ 0 & 1 \end{pmatrix}$ corresponds to the complex conjugation. This concludes the proof of Theorem \ref{thm_relation_U_p_modSymb}.
\end{proof}

\section{Applications using Mazur's Eisenstein ideal}
Keep the notation of \S \ref{intro_Eisenstein_ideal}. Fix an integer $s$ such that $1 \leq s \leq t$. We denote by $\mathscr{L}_s$ the image of $\mathscr{L}_R$ in $\mathbb{T} \otimes_{\mathbf{Z}} \mathbf{Z}/\ell^s\mathbf{Z}$.

Let $H:=H_1(X_0(p), \cusps, \mathbf{Z}/\ell^s\mathbf{Z})$ be the singular homology of $X_0(p)$ relative to the cusps, with coefficients in $\mathbf{Z}/\ell^s\mathbf{Z}$. Similarly, we denote by $H^0$ the absolute homology $H_1(X_0(p), \mathbf{Z}/\ell^s\mathbf{Z})$. We denote by $H_+$ (resp. $(H^0)_+$) the subspace of $H$ (resp. $H^0$) fixed by the complex conjugation. If $\alpha$, $\beta \in \mathbf{P}^1(\mathbf{Q})$, we denote by $\{\alpha, \beta\} \in H$ the class of the geodesic path between $\alpha$ and $\beta$ in the Poincar\'e upper-half plane. 

\subsection{Results concerning the Eisenstein ideal}\label{section_reminder_Eisenstein}

In this paragraph, we recall some results concerning the Eisenstein ideal that we will use to prove the theorems stated in \S \ref{intro_Eisenstein_ideal}. 

By Manin, we have a surjective group homomorphism $\xi : \mathbf{Z}[\Gamma_0\backslash \SL_2(\mathbf{Z})] \rightarrow H$
sending $\Gamma_0\cdot \begin{pmatrix} a & b \\ c & d \end{pmatrix}$ to $\{\frac{b}{d}, \frac{a}{c}\}$. We identify $\Gamma_0\backslash \SL_2(\mathbf{Z})$ with $\mathbf{P}^1(\mathbf{Z}/p\mathbf{Z})$, via the map $\Gamma_0\cdot \begin{pmatrix} a & b \\ c & d \end{pmatrix} \mapsto [c:d]$. The subgroup $H^0$ of $H$ is spanned by the elements $\xi([a:1])$ with $a \not\equiv 0 \text{ (modulo }p\text{)}$. Since $[a:1]$ corresponds to the coset $\Gamma_0\cdot \begin{pmatrix} 1 & 0 \\ a & 1 \end{pmatrix}$, the group $H^0$ is spanned by the elements $\{0, \frac{1}{a}\}$ for $a \in \mathbf{Z}$ coprime to $p$ (this only depends on $a$ modulo $p$). Let $w_p \in \mathbb{T}$ be the Atkin--Lehner involution (which, in this case, is simply $-U_p$). We have $w_p\{0, \frac{1}{a}\} = \{\infty, -\frac{a}{p}\}$, so $H^0$ is also spanned by the elements $\{\infty, \frac{a}{p}\}$ for $a \in \mathbf{Z}$ coprime to $p$. In view of (\ref{equation_L_invariant_main}), it will be more convenient to work with the latter elements rather than the original Manin symbols.

\begin{thm}\label{thm_Eisenstein_ideal}
\begin{enumerate}
\item\label{thm_Eisenstein_ideal_i} We have $I \cdot H_+ = (H^0)_+$.
\item \label{thm_Eisenstein_ideal_ii} (Mazur) There is a group isomorphism $(H^0)_+/I\cdot (H^0)_+ \xrightarrow{\sim} \mathbf{Z}/\ell^s\mathbf{Z}$ sending $\sum_{a \in (\mathbf{Z}/p\mathbf{Z})^{\times}} \lambda_{a}\cdot \{\infty, \frac{a}{p}\}$ to $\sum_{a \in (\mathbf{Z}/p\mathbf{Z})^{\times}} \lambda_{a}\cdot \log(a)$ modulo $\ell^s$.
\item (Wang--Lecouturier) \label{thm_Eisenstein_ideal_iii} There is a group isomorphism $I\cdot (H^0)_+/I^2\cdot (H^0)_+ \xrightarrow{\sim} J\cdot \mathcal{K}_s/J^2\cdot \mathcal{K}_s$ sending $\sum_{a \in (\mathbf{Z}/p\mathbf{Z})^{\times}} \lambda_{a}\cdot \{\infty, \frac{a}{p}\}$ to 
$$\sum_{a \in (\mathbf{Z}/p\mathbf{Z})^{\times}} \lambda_{a}\cdot \left((1-\zeta_p^a, 1-\zeta_p)-\frac{1}{2}\cdot ([\sigma_a]-1)\cdot (1-\zeta_p^a, 1-\zeta_p) \right) \text{ ,}$$
where $\sigma_a \in \Gal(K/\mathbf{Q})$ is the restriction of the automorphism of $\mathbf{Q}(\zeta_p)$ sending $\zeta_p$ to $\zeta_p^a$.
\end{enumerate}
\end{thm}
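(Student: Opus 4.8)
The plan is to reduce the three assertions to the module structure of $H_+$ over $\mathbb{T}\otimes_{\mathbf{Z}}\mathbf{Z}/\ell^s\mathbf{Z}$. Recall from the proof of Lemma~\ref{lemma_control_Hecke} (using \cite[Proposition 18.3]{Mazur_Eisenstein} away from the Eisenstein maximal ideal and \cite[Proposition 4.1]{Lecouturier_higher} at it) that $H_1(X_0(p),\cusps,R)_+$ is free of rank one over $\mathbb{T}\otimes_{\mathbf{Z}}R$ for $R=\mathbf{Z}/\ell^t\mathbf{Z}$; since $\mathbf{Z}/\ell^s\mathbf{Z}$ is a quotient of $R$ and the integral homology of the pair is free, the same holds after replacing $R$ by $\mathbf{Z}/\ell^s\mathbf{Z}$, so fix an isomorphism $H_+\simeq\mathbb{T}\otimes_{\mathbf{Z}}\mathbf{Z}/\ell^s\mathbf{Z}$. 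Once assertion~(\ref{thm_Eisenstein_ideal_i}) is known, namely $(H^0)_+=I\cdot H_+$, one has $I^k\cdot(H^0)_+=I^{k+1}\cdot H_+$ for all $k\geq 0$, so the quotients in~(\ref{thm_Eisenstein_ideal_ii}) and~(\ref{thm_Eisenstein_ideal_iii}) become the successive graded pieces $I\cdot H_+/I^2\cdot H_+$ and $I^2\cdot H_+/I^3\cdot H_+$ of the $I$-adic filtration, which are precisely what Mazur and Wang--Lecouturier compute; the remaining task is then to translate their normalisations into ours and to track the image of $\sum_a\lambda_a\{\infty,a/p\}$.

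For assertion~(\ref{thm_Eisenstein_ideal_i}) I would argue as follows. The homology long exact sequence of the pair $(X_0(p),\cusps)$, together with $X_0(p)$ having exactly two rational cusps, gives $H/H^0\simeq\mathbf{Z}/\ell^s\mathbf{Z}$ generated by the class of $\{0,\infty\}$; since complex conjugation fixes both cusps, this generator is fixed modulo $H^0$, and, $\ell$ being odd, $H=H_+\oplus H_-$ and $H^0=(H^0)_+\oplus(H^0)_-$, whence $H_+/(H^0)_+\simeq\mathbf{Z}/\ell^s\mathbf{Z}$ and $(H^0)_-=H_-$. On $H/H^0$ the Hecke algebra acts through the Eisenstein character, $T_q$ by $q+1$ (it fixes each cusp and has degree $q+1$) and $U_p$ by $+1$ (as $w_p=-U_p$ interchanges the two cusps); hence $I\cdot(H/H^0)=0$, so $I\cdot H_+\subseteq(H^0)_+$ and the surjection $H_+\twoheadrightarrow H_+/(H^0)_+$ factors through $H_+/I\cdot H_+$. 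Now $\mathbb{T}/I\simeq\mathbf{Z}$: every $T_n$ is congruent modulo $I$ to an integer (its eigenvalue on the weight-two Eisenstein series of level $\Gamma_0(p)$), so $\mathbb{T}=\mathbf{Z}\cdot 1+I$, and $\mathbf{Z}\cap I=0$ because the Eisenstein character $\mathbb{T}\to\mathbf{Z}$ splits the structure map $\mathbf{Z}\hookrightarrow\mathbb{T}$. Therefore $H_+/I\cdot H_+\simeq(\mathbb{T}/I)\otimes_{\mathbf{Z}}\mathbf{Z}/\ell^s\mathbf{Z}\simeq\mathbf{Z}/\ell^s\mathbf{Z}$, and the induced surjection $\mathbf{Z}/\ell^s\mathbf{Z}\twoheadrightarrow H_+/(H^0)_+\simeq\mathbf{Z}/\ell^s\mathbf{Z}$ is an isomorphism; comparing kernels gives $(H^0)_+=I\cdot H_+$.

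For assertion~(\ref{thm_Eisenstein_ideal_ii}) I would invoke Mazur's computation of the Eisenstein quotient of $H_1(X_0(p))$: by~(\ref{thm_Eisenstein_ideal_i}) we have $(H^0)_+/I\cdot(H^0)_+=I\cdot H_+/I^2\cdot H_+$, which Mazur identifies with $\mathbf{Z}/\ell^s\mathbf{Z}$, the explicit value $\{\infty,a/p\}\mapsto\log(a)$ coming from his description of the boundary homomorphism into the divisor group of the cusps of the Shimura covering $X_1(p)$: those cusps are indexed by $(\mathbf{Z}/p\mathbf{Z})^{\times}/\{\pm1\}$, the boundary of $\{\infty,a/p\}$ is supported at the cusp indexed by $\pm a$, and applying $\log$ returns $\log(a)$. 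Here one only has to pass between Mazur's conventions (Albanese versus Picard Hecke operators, and the roles of the cusps $0$ and $\infty$, interchanged by $w_p$) and those fixed in \S~\ref{intro_Eisenstein_ideal}; as a consistency check, under the tame symbol of Remarks~\ref{rem_alpha_geq_2}~(\ref{remark_tame_symbol}) one has $(1-\zeta_p^a,1-\zeta_p)\mapsto\log(\overline{(1-\zeta_p^a)/(1-\zeta_p)})=\log(a)$.

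For assertion~(\ref{thm_Eisenstein_ideal_iii}) I would invoke the Wang--Lecouturier theorem from \cite{Lecouturier_Sharifi} on Sharifi's conjecture: again $I\cdot(H^0)_+/I^2\cdot(H^0)_+=I^2\cdot H_+/I^3\cdot H_+$, and that work supplies an isomorphism of this graded piece with $J\cdot\mathcal{K}_s/J^2\cdot\mathcal{K}_s$, compatible with the previous one, sending the Manin symbol of $\{\infty,a/p\}$ to a norm of a Steinberg symbol in $1-\zeta_p^a$ and $1-\zeta_p$. The delicate point is the precise shape of that image: Sharifi's cup-product map is naturally defined on the homology of $X_1(p)$, and the descent to the plus quotient at level $\Gamma_0(p)$ involves an averaging over complex conjugation (the mechanism behind the identity $c(\gamma)\gamma^{-1}\in\Gamma_1(p)$ used in the proof of Theorem~\ref{thm_relation_U_p_modSymb}) which, combined with the $\Gal(K/\mathbf{Q})$-equivariance of the symbol, produces exactly the correction term $-\tfrac{1}{2}([\sigma_a]-1)\cdot(1-\zeta_p^a,1-\zeta_p)$. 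I expect this verification — checking that Sharifi's normalisation and the $X_1(p)\to X_0(p)$ descent yield precisely this twist rather than some other element of $J\cdot\mathcal{K}_s$ — to be the main obstacle; assertion~(\ref{thm_Eisenstein_ideal_i}) is elementary and~(\ref{thm_Eisenstein_ideal_ii}) is essentially a dictionary with \cite{Mazur_Eisenstein}.
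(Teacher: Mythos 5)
Your proposal is correct and follows essentially the same route as the paper: for assertion~(\ref{thm_Eisenstein_ideal_i}) both arguments use the boundary exact sequence to get $I\cdot H_+\subset(H^0)_+$, then deduce equality from the Mazur/Lecouturier input that $H_+$ is free of rank one over $\mathbb{T}\otimes_{\mathbf{Z}}\mathbf{Z}/\ell^s\mathbf{Z}$ (which in the paper is invoked via \cite[Proposition~4.1]{Lecouturier_higher}, and which you unpack together with the elementary identification $\mathbb{T}/I\simeq\mathbf{Z}$), while assertions~(\ref{thm_Eisenstein_ideal_ii}) and~(\ref{thm_Eisenstein_ideal_iii}) are, as you say, direct citations of \cite[Proposition~II.18.8]{Mazur_Eisenstein} and \cite[Theorem~1.7]{Lecouturier_Sharifi} after rewriting the quotients as graded pieces $I^k H_+/I^{k+1}H_+$ of the $I$-adic filtration. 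The only difference is that you spell out details the paper leaves to the references; your caution about matching Sharifi-side normalisations in~(\ref{thm_Eisenstein_ideal_iii}) is well placed, but the paper takes that as part of the content of \cite[Theorem~1.7]{Lecouturier_Sharifi} rather than re-deriving it.
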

\begin{proof}
Point (\ref{thm_Eisenstein_ideal_i}) is well-known, but we give a proof for the convenience of the reader. We have an exact sequence of $\mathbb{T}$-modules
$$0 \rightarrow H^0 \rightarrow H \rightarrow (\mathbf{Z}/\ell^s\mathbf{Z})[\mathcal{C}_0]_0 \rightarrow 0$$
where $(\mathbf{Z}/\ell^s\mathbf{Z})[\mathcal{C}_0]_0$ is the group of degree zero divisors supported on the cusps $\mathcal{C}_0 = \{\Gamma_0 0, \Gamma_0 \infty\}$. Since $I$ annihilates $(\mathbf{Z}/\ell^s\mathbf{Z})[\mathcal{C}_0]_0 $, we have $I\cdot H_+ \subset (H^0)_+$. To prove the equality, it suffices to prove that $H_+/I\cdot H_+ \simeq \mathbf{Z}/\ell^s\mathbf{Z}$. This is a consequence of a result of Mazur, as explained in \cite[Proposition 4.1]{Lecouturier_higher}. Point (\ref{thm_Eisenstein_ideal_ii}) follows from \cite[Proposition II.18.8]{Mazur_Eisenstein}. Point (\ref{thm_Eisenstein_ideal_iii}) follows from \cite[Theorem 1.7]{Lecouturier_Sharifi}.
\end{proof}

The following result, due to Mazur, will be useful.
\begin{prop}\label{Prop_criterion_Eisenstein}
Let $U \in \mathbb{T} \otimes_{\mathbf{Z}} \mathbf{Z}/\ell^s\mathbf{Z}$ and $n \in \mathbf{Z}_{\geq 0}$. The following assertions are equivalent.
\begin{enumerate}
\item $U \in I^n \cdot (\mathbb{T} \otimes_{\mathbf{Z}} \mathbf{Z}/\ell^s\mathbf{Z})$.
\item $U\cdot H_+ \subset I^n \cdot H_+$.
\end{enumerate}
\end{prop}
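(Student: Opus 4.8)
The plan is to deduce the proposition from Mazur's theorem that $H_+$ is free of rank one over the Artinian ring $A := \mathbb{T} \otimes_{\mathbf{Z}} \mathbf{Z}/\ell^s\mathbf{Z}$. The implication from the first assertion to the second is formal: if $U \in I^n\cdot A$ then $U\cdot H_+ \subseteq I^n\cdot H_+$, since $I^n\cdot H_+$ is an $A$-submodule of $H_+$. All of the content is thus in the converse implication.

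First I would record that $H_+ = H_1(X_0(p), \cusps, \mathbf{Z}/\ell^s\mathbf{Z})_+$ is free of rank one over $A$. This is the same statement used in the proof of Lemma \ref{lemma_control_Hecke}, with $\mathbf{Z}/\ell^s\mathbf{Z}$ in place of $R$: since $A$ is Artinian it may be checked after localization at each maximal ideal, and there it follows from \cite[Proposition II.18.3]{Mazur_Eisenstein} (using that $\ell$ is odd) at the non-Eisenstein maximal ideals and from \cite[Proposition 4.1]{Lecouturier_higher} at the Eisenstein one. Alternatively, since the relative $H_0$ of the pair $(X_0(p), \cusps)$ vanishes and $\mathbf{Z}/\ell^s\mathbf{Z}$ is a quotient of $R = \mathbf{Z}/\ell^t\mathbf{Z}$, the universal coefficient theorem gives $H_+ \cong H_1(X_0(p), \cusps, R)_+ \otimes_R \mathbf{Z}/\ell^s\mathbf{Z}$, and the rank-one freeness is inherited from the case of $R$ by base change.

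Granting this, fix a generator $v$ of $H_+$, yielding an isomorphism of $A$-modules $A \xrightarrow{\sim} H_+$, $a \mapsto a\cdot v$. Under it the submodule $I^n\cdot H_+$ corresponds to the ideal $I^n\cdot A$ and $U\cdot H_+$ corresponds to the principal ideal $U\cdot A$. Hence $U\cdot H_+ \subseteq I^n\cdot H_+$ if and only if $U\cdot A \subseteq I^n\cdot A$, and, taking the image of $1$, this holds if and only if $U \in I^n\cdot A$, which is exactly the first assertion. I do not anticipate any real difficulty: the only non-formal input is the freeness of $H_+$, already invoked above, and the remainder is linear algebra over $A$; the single point deserving care is that the cited freeness statements be applied with $\mathbf{Z}/\ell^s\mathbf{Z}$-coefficients for every admissible $s$, which the base-change remark reduces to the already-treated case over $R$.
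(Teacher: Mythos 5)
Your proof is correct. You use the global statement that $H_+$ is free of rank $1$ over the full Artinian ring $A = \mathbb{T}\otimes_{\mathbf{Z}}\mathbf{Z}/\ell^s\mathbf{Z}$ (which the paper also invokes in the proof of Lemma~\ref{lemma_control_Hecke}), and then the equivalence is immediate by transporting through a generator. The paper's own proof is slightly leaner: it only uses that $H_+$ is \emph{locally} free of rank $1$ at the Eisenstein maximal ideal $I+(\ell)$, implicitly relying on the fact that at every other maximal ideal the localization of $I^n$ is the unit ideal so that both conditions of the proposition are vacuous there. Thus the paper avoids invoking Mazur's \cite[Proposition II.18.3]{Mazur_Eisenstein} at the non-Eisenstein primes, while your argument needs it (or the base-change reduction you indicate). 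Both approaches are valid and the underlying input — Mazur's rank-one freeness at the Eisenstein prime — is the same; your version is marginally stronger in hypothesis but identical in spirit.
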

\begin{proof}
This follows from the fact that $H_+$ is locally free of rank $1$ at the maximal ideal $I+(\ell)$, a consequence of a result of Mazur (\cf \cite[Proposition 4.1]{Lecouturier_higher}).
\end{proof}

\subsection{Study of $\alpha(p, \ell, s)$}

We first reformulate Theorem \ref{main_thm} using the language of \S \ref{section_reminder_Eisenstein}. There is a perfect bilinear pairing $\bullet : \Symb_{\Gamma_0}(\mathbf{Z}/\ell^s\mathbf{Z})\times H \rightarrow \mathbf{Z}/\ell^s\mathbf{Z}$ given by $\psi \bullet \{\alpha, \beta\} = \psi((\alpha)-(\beta))$. This pairing is $\mathbb{T}$-equivariant, meaning that for all $T \in \mathbb{T}$, $\psi \in  \Symb_{\Gamma_0}(\mathbf{Z}/\ell^s\mathbf{Z})$ and $x \in H$ we have $(T\cdot\psi) \bullet x = \psi\bullet (T\cdot x)$ (in general the pairing exchanges $T$ with its dual $T^*$, but in level $\Gamma_0$ we have $T=T^*$).

Theorem \ref{main_thm} is equivalent to the single equality in $H_+$:
\begin{equation}\label{reformulation_main_thm}
\mathscr{L}_s\cdot \{0, \infty\} = \frac{1}{2}\cdot (U_p+1)\cdot \sum_{a \in (\mathbf{Z}/p\mathbf{Z})^{\times}} \log(a)\cdot \{\frac{a}{p}, \infty\} 
\end{equation}
The factor $\frac{1}{2}\cdot (U_p+1)$ is to take into account the fact that Theorem \ref{main_thm} is restricted to those $\psi$ fixed by $U_p$ (recall that $U_p=-w_p$ so $U_p\{0, \infty\} = -w_p\{0, \infty\} = -\{\infty,0\} = \{0, \infty\}$). 

We now prove Theorem \ref{thm_alpha_geq_2}. Notice that the right hand side of (\ref{reformulation_main_thm}) lies in $(H^0)_+$. By Proposition \ref{Prop_criterion_Eisenstein} and Theorem \ref{thm_Eisenstein_ideal} (\ref{thm_Eisenstein_ideal_i}), we have $\mathscr{L}_s \in I\cdot (\mathbb{T} \otimes_{\mathbf{Z}} \mathbf{Z}/\ell^s\mathbf{Z})$. To prove that we have $\mathscr{L}_s\in I^2\cdot (\mathbb{T} \otimes_{\mathbf{Z}} \mathbf{Z}/\ell^s\mathbf{Z})$, it suffices to prove that we have $\sum_{a \in (\mathbf{Z}/p\mathbf{Z})^{\times}} \log(a)\cdot \{\frac{a}{p}, \infty\} \in I\cdot (H^0)_+$. This follows from Theorem \ref{thm_Eisenstein_ideal} (\ref{thm_Eisenstein_ideal_ii}) and the fact that $\sum_{a \in (\mathbf{Z}/p\mathbf{Z})^{\times}} \log(a)^2\equiv 0 \text{ (modulo }\ell^s\text{)}$.

We now prove Theorem \ref{thm_alpha_geq_3}. By (\ref{reformulation_main_thm}) and Proposition \ref{Prop_criterion_Eisenstein}, we have $\alpha(p, \ell, s)\geq 3$ if and only if $$\frac{1}{2}\cdot (U_p+1)\cdot \sum_{a \in (\mathbf{Z}/p\mathbf{Z})^{\times}} \log(a)\cdot \{\frac{a}{p}, \infty\} \in  I\cdot (H^0)_+ \text{ .}$$
Since $\ell$ is odd and $U_p-1 \in \cap_{n\geq 0} I ^n$, we have $\alpha(p, \ell, s)\geq 3$ if and only if $$ \sum_{a \in (\mathbf{Z}/p\mathbf{Z})^{\times}} \log(a)\cdot \{\frac{a}{p}, \infty\} \in  I\cdot (H^0)_+ \text{ .}$$
By Theorem \ref{thm_Eisenstein_ideal} (\ref{thm_Eisenstein_ideal_iii}), this is equivalent to saying that 
$$\sum_{a \in (\mathbf{Z}/p\mathbf{Z})^{\times}} \log(a) \cdot (1-\zeta_p^a, 1-\zeta_p) -\frac{1}{2}\cdot ([\sigma_a]-1)\cdot (1-\zeta_p^a, 1-\zeta_p) \in J^2\cdot \mathcal{K}_s \text{ .}$$
To conclude the proof of Theorem \ref{thm_alpha_geq_3}, it suffices to prove that 
$$\sum_{a \in (\mathbf{Z}/p\mathbf{Z})^{\times}} \log(a)\cdot ([\sigma_a]-1)\cdot (1-\zeta_p^a, 1-\zeta_p) \in J^2\cdot \mathcal{K}_s \text{ ,}$$
or equivalently that
$$\sum_{a \in (\mathbf{Z}/p\mathbf{Z})^{\times}} \log(a)^2 \cdot (1-\zeta_p^a, 1-\zeta_p) \in J\cdot \mathcal{K}_s \text{ .}$$
By Remarks \ref{rem_alpha_geq_2} (\ref{remark_tame_symbol}), this is equivalent to $\sum_{a \in (\mathbf{Z}/p\mathbf{Z})^{\times}} \log(a)^3 \equiv 0 \text{ (modulo }\ell^s\text{)}$, which is true since $\ell \geq 5$.

\subsection{The tree formula}
This this last paragraph, we prove Theorem \ref{corr_alpha_geq_2}. We recall the main result of \cite{deShalit}, that are essential to our proof. Recall that by definition, $\mathscr{L}_R$ is an endomorphism of $N\otimes_{\mathbf{Z}} R$. Joseph Oesterl\'e conjectured an explicit formula for this endomorphism, and de Shalit proved this conjecture in \cite{deShalit} (up to a sign, which is unimportant here since $\ell$ is odd). The formula is as follows (\cf  \cite[\S 1.6 Main thm]{deShalit}).

For any $E \in S$, we have in $N \otimes_{\mathbf{Z}}R$:
\begin{equation}
\mathscr{L}_R([E]) = \sum_{E' \in S \atop E' \neq E} \frac{1}{w_{E'}} \cdot \log((j(E')-j(E))^{p+1})\cdot ([E']-[E]) \text{ ,}
\end{equation}
where $j(E)$ is the $j$-invariant of $E$. Recall that $j(E)$ is in $\mathbf{F}_{p^2}$, so $(j(E')-j(E))^{p+1}$ belongs to $\mathbf{F}_p^{\times}$ and we can apply $\log$ to it. The image of $\mathscr{L}_R$ is visibly contained in $N_0 \otimes_{\mathbf{Z}} R$, which is another way to prove that $\mathscr{L}_R \in I\cdot (\mathbb{T} \otimes_{\mathbf{Z}} R)$. 

Using results of Mazur and Emerton, one can show that the fact that $\alpha(p, \ell, t) \geq 2$ implies that the kernel of $\mathscr{L}_R$ (in $N \otimes_{\mathbf{Z}} R$) contains a free $R$-module of rank $2$. More precisely, $\Ker(\mathscr{L}_R)$ contains $R\cdot e_0 \oplus R\cdot e_1$ where $e_0$ and $e_1$ are the first two \textit{higher Eisenstein elements} in $N \otimes_{\mathbf{Z}} R$ defined in \cite[\S 3.1]{Lecouturier_higher}. We have an explicit elementary formula for $e_1$ \cite[Theorem 1.6]{Lecouturier_higher}, which gives $m+1$ quadratic identities in $\log$ of difference of supersingular $j$-invariants, where $m = \genus(X_0)$. We leave to the interested reader the task to write down the formulas, since we focus instead on the single formula (\ref{tree_formula}).

Fix an ordering $(E_0, ..., E_m)$ of $S$ and denote by $M$ the matrix of $\mathscr{L}_R$ in the $R$-basis $([E_0], ..., [E_m])$ of $N \otimes_{\mathbf{Z}} R$. Note that $M$ is symmetric if $p\equiv 1 \text{ (modulo }12\text{)}$, which we assume from now on. We have seen that $\Ker(M)$ contains a sub-$R$-module isomorphic to $R^2$. We can thus apply the following fact (whose proof is easy and left to the reader).

\begin{lem}
Let $q$ be a prime power and $M$ be a square matrix with coefficients in $\mathbf{Z}/q\mathbf{Z}$ whose kernel contains a submodule isomorphic to $(\mathbf{Z}/q\mathbf{Z})^2$. Then all the first minors of $M$ are equal to $0$.
\end{lem}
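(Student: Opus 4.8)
The plan is to reduce the statement to the trivial case of a diagonal matrix via the Smith normal form. Write $q = \ell^{s}$ with $\ell$ prime. Since $\mathbf{Z}/q\mathbf{Z}$ is a quotient of the principal ideal domain $\mathbf{Z}_{\ell}$, one can lift $M$ to a matrix over $\mathbf{Z}_{\ell}$, diagonalize it there, and reduce modulo $q$ to get a factorization $M = U D W$ with $U, W \in \GL_{n}(\mathbf{Z}/q\mathbf{Z})$ and $D = \mathrm{diag}(\ell^{a_{1}}, \dots, \ell^{a_{n}})$, where $0 \leq a_{1} \leq \dots \leq a_{n} \leq s$ (using the convention $\ell^{s} = 0$ in $\mathbf{Z}/q\mathbf{Z}$). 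Equivalently, one may invoke the Smith normal form directly over the principal ideal ring $\mathbf{Z}/q\mathbf{Z}$.

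Next I would read off the kernel: as $U$ and $W$ are invertible, $\Ker(M) = W^{-1}\Ker(D)$ is isomorphic to $\Ker(D) = \bigoplus_{i=1}^{n} \mathbf{Z}/\ell^{\min(a_{i}, s)}\mathbf{Z}$. The hypothesis now enters through the following elementary fact: if a finite abelian $\ell$-group $B = \bigoplus_{i} \mathbf{Z}/\ell^{b_{i}}\mathbf{Z}$ contains a subgroup isomorphic to $(\mathbf{Z}/\ell^{s}\mathbf{Z})^{2}$, then at least two of the $b_{i}$ are $\geq s$. Indeed, the given embedding restricts to an embedding of $\ell^{s-1}(\mathbf{Z}/\ell^{s}\mathbf{Z})^{2} \cong (\mathbf{Z}/\ell\mathbf{Z})^{2}$ into $\ell^{s-1}B \cap B[\ell]$, and the $\mathbf{F}_{\ell}$-dimension of the latter group is exactly $\#\{i : b_{i} \geq s\}$ (the $i$-th summand contributes $\mathbf{Z}/\ell\mathbf{Z}$ precisely when $b_{i} \geq s$). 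Applying this with $b_{i} = \min(a_{i}, s)$ forces $a_{n-1} = a_{n} = s$, so $D$ has at most $n-2$ nonzero diagonal entries.

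It then remains to propagate the vanishing of the first minors from $D$ to $M$. For a diagonal matrix with at least two zero diagonal entries, any submatrix obtained by deleting one row and one column either is again diagonal with a zero on its diagonal (when the indices of the deleted row and column agree), or acquires an identically zero row (otherwise); in both cases its determinant vanishes, i.e. the adjugate $\mathrm{adj}(D)$ is zero. Since $\mathrm{adj}(XY) = \mathrm{adj}(Y)\,\mathrm{adj}(X)$ over any commutative ring, $\mathrm{adj}(M) = \mathrm{adj}(W)\,\mathrm{adj}(D)\,\mathrm{adj}(U) = 0$, and since the entries of $\mathrm{adj}(M)$ are, up to sign, exactly the first minors of $M$, all of these vanish. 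No step presents a genuine obstacle; the only point deserving a line of care is the group-theoretic observation in the second paragraph — a copy of $(\mathbf{Z}/\ell^{s}\mathbf{Z})^{2}$ in the kernel yields two invariant factors of maximal size — which is exactly what excludes the one-dimensional counterexamples such as $\mathrm{diag}(1,0)$.
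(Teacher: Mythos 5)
The paper states that the proof is ``easy and left to the reader,'' so there is no proof in the paper to compare against. Your Smith-normal-form argument is correct and complete: the factorization $M=UDW$ with $D=\mathrm{diag}(\ell^{a_1},\dots,\ell^{a_n})$ is available because $\mathbf{Z}/q\mathbf{Z}$ is a principal ideal ring (or by lifting to $\mathbf{Z}_\ell$); the kernel computation $\Ker(M)\cong\Ker(D)\cong\bigoplus_i\mathbf{Z}/\ell^{\min(a_i,s)}\mathbf{Z}$ is right; and the group-theoretic step --- that a copy of $(\mathbf{Z}/\ell^s\mathbf{Z})^2$ inside $\bigoplus_i\mathbf{Z}/\ell^{b_i}\mathbf{Z}$ forces at least two $b_i\ge s$, via the injection of $(\mathbf{Z}/\ell\mathbf{Z})^2$ into $\ell^{s-1}B\cap B[\ell]$ --- is exactly the point that rules out a single zero diagonal entry and is handled carefully. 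The conclusion $\mathrm{adj}(D)=0$ and the universal identity $\mathrm{adj}(UDW)=\mathrm{adj}(W)\,\mathrm{adj}(D)\,\mathrm{adj}(U)$ then give $\mathrm{adj}(M)=0$, i.e.\ all first minors vanish.

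One remark: the full Smith normal form is slightly more machinery than needed. Since $\mathbf{Z}/q\mathbf{Z}$ is local with residue field $\mathbf{F}_\ell$, any $v_1,v_2\in(\mathbf{Z}/q\mathbf{Z})^n$ generating a free rank-$2$ submodule reduce to linearly independent vectors modulo $\ell$ (otherwise the submodule they generate has fewer than $q^2$ elements), hence extend by Nakayama to a basis. Taking $W\in\GL_n(\mathbf{Z}/q\mathbf{Z})$ with first two columns $v_1,v_2\in\Ker(M)$, the matrix $MW$ has two identically zero columns, so $\mathrm{adj}(MW)=0$, and $\mathrm{adj}(M)=\mathrm{adj}(MW)\,\mathrm{adj}(W)^{-1}=0$. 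This bypasses the structure theorem and the invariant-factor analysis entirely, at the cost of the Nakayama step. Both routes are fine.
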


To conclude the proof of Theorem \ref{corr_alpha_geq_2}, it suffices to prove that the left-hand side of (\ref{tree_formula}) is (up to sign) the determinant of a first minor of $M$. This is a direct consequence of the weighted matrix-tree theorem (a generalization of Kirchhoff's theorem) applied to the complete graph whose vertices are elements of $S$ and whose weight on the edge linking $E$ and $E'$ is $\log((j(E')-j(E))^{p+1})$. Note that we have used the fact that $M$ is symmetric and that the sum of each column is zero, which is why we assumed $p \equiv 1 \text{ (modulo }12\text{)}$.

\bibliography{biblio}
\bibliographystyle{plain}
\newpage

\end{document}